\newcommand{\R}{\mathbb{R}}
\newcommand{\cond}{\,\middle\vert\,}
\newcommand{\pf}[1]{\nabla f (#1)}
\newcommand{\pfik}[1]{\nabla f_{i_k}(#1)}
\newcommand{\pfd}[1]{\nabla f^{\delta_t} (#1)}
\newcommand{\pfdik}[1]{\nabla f^{\delta_t}_{i_k}(#1)}
\newcommand{\pfdii}[1]{\nabla f^{\delta_t}_{i}(#1)}
\newcommand{\ph}[1]{\nabla h (#1)}
\newcommand{\phd}[1]{\nabla h^{\delta_t} (#1)}
\newcommand{\phdik}[1]{\nabla h^{\delta_t}_{i_k}(#1)}
\newcommand{\phdii}[1]{\nabla h^{\delta_t}_{i}(#1)}
\newcommand{\norm}[1]{\left\lVert#1\right\rVert}
\newcommand{\innr}[1]{\left\langle#1\right\rangle}
\newcommand{\Eik}[1]{\mathbb{E}_{i_k} \left[#1\right]}
\newcommand{\E}[1]{\mathbb{E} \left[#1\right]}
\newcommand{\mleq}[1]{\overset{\mathclap{(#1)}}{\leq}}
\newcommand{\meq}[1]{\overset{\mathclap{(#1)}}{=}}
\newcommand{\mra}[1]{\overset{\mathclap{(#1)}}{\Rightarrow}}
\newcommand{\mar}[1]{\left(#1\right)}
\newcommand{\xs}{x^{\star}}
\newcommand{\xsd}{x^{\star}_{\delta_t}}
\newcommand{\Xs}{\mathcal{X}^\star}
\newcommand{\prob}[1]{\textup{Prob}\left\{#1\right\}}
\DeclareMathOperator*{\argmin}{arg\,min}
\definecolor{Gray}{gray}{0.9}
\newcolumntype{C}{>{\centering\let\newline\\\arraybackslash\hspace{0pt}}m{5cm}}
\newcommand{\hfilll}{\hspace{0pt plus 1filll}}
\newtheorem{lemmas}{Lemma}
\newtheorem{theorems}{Theorem}[section]
\newtheorem{propositions}{Proposition}[section]
\newtheorem{remark}{Remark}[theorems]
\newtheorem{pro-remark}{Remark}[propositions]
\newtheorem{corollarys}{Corollary}[theorems]
\begin{document}

\title{Practical Schemes for Finding Near-Stationary Points of Convex Finite-Sums}

\author{Kaiwen Zhou\thanks{Department of Computer Science and Engineering, The Chinese University of Hong Kong, Sha Tin, N.T., Hong Kong SAR; e-mail: \href{mailto:kwzhou@cse.cuhk.edu.hk}{\tt kwzhou@cse.cuhk.edu.hk}.} \and Lai Tian\thanks{Department of Systems Engineering and Engineering Management, The Chinese University of Hong Kong, Sha Tin, N.T., Hong Kong SAR; e-mail: \href{mailto:tianlai.cs@gmail.com}{\tt tianlai.cs@gmail.com}.} \and Anthony Man-Cho So\thanks{Department of Systems Engineering and Engineering Management, The Chinese University of Hong Kong, Sha Tin, N.T., Hong Kong SAR; e-mail: \href{mailto:manchoso@se.cuhk.edu.hk}{\tt manchoso@se.cuhk.edu.hk}.} \and James Cheng\thanks{Department of Computer Science and Engineering, The Chinese University of Hong Kong, Sha Tin, N.T., Hong Kong SAR; e-mail: \href{mailto:jcheng@cse.cuhk.edu.hk}{\tt jcheng@cse.cuhk.edu.hk}.}}

\pagestyle{plain}
\date{}

\maketitle

\begin{abstract}
In convex optimization, the problem of finding near-stationary points has not been adequately studied yet, unlike other optimality measures such as the function value. Even in the deterministic case, the optimal method (OGM-G, due to \citet{OGM-G}) has just been discovered recently. In this work, we conduct a systematic study of algorithmic techniques for finding near-stationary points of convex finite-sums. Our main contributions are several algorithmic discoveries: (1) we discover a memory-saving variant of OGM-G based on the performance estimation problem approach \citep{PEP}; (2) we design a new accelerated SVRG variant that can simultaneously achieve fast rates for minimizing both the gradient norm and function value; (3) we propose an adaptively regularized accelerated SVRG variant, which does not require the knowledge of some unknown initial constants and achieves near-optimal complexities. We put an emphasis on the simplicity and practicality of the new schemes, which could facilitate future work.
\end{abstract}

\section{Introduction}
Classic convex optimization usually focuses on providing guarantees for minimizing function value. For this task, the optimal (up to constant factors) Nesterov's accelerated gradient method (NAG) \citep{AGD1,AGD2} has been known for decades, and there are even methods that can exactly match the lower complexity bounds \citep{OGM,Drori17,ITEM,drori2021oracle}. On the other hand, in general non-convex optimization, near-stationarity is the typical optimality measure, and there has been a flurry of recent research devoted to this topic \citep{ghadimi2013stochastic,ghadimi2016accelerated,Ge15,jin17a,Fang18,Zhou0G20}. Recently, there has been growing interest on devising fast schemes for finding near-stationary points in convex optimization \citep{nesterov2012make,allen2018make,foster2019complexity,carmon2021lower,KimF18,kim2018generalizing,OGM-G,ito2021nearly,diakonikolas2021potential,diakonikolas2021complementary,lee2021geometric}. This line of research is driven by the following applications and facts.
\begin{itemize}[leftmargin=15pt]
	\item \citet{nesterov2012make} studied the problem that has a linear constraint: $f(\xs)=\min_{x\in Q} {\{f(x) : Ax =b\}}$, where $Q$ is a convex set and $f$ is strongly convex. Assuming that $Q$ and $f$ are simple, we can focus on the dual problem $\phi(y^\star)=\max_y\{\phi(y) \triangleq \min_{x\in Q} {\{f(x) + \innr{y, b-Ax}\}}\}$. Clearly, the dual objective $-\phi(y)$ is smooth convex. Letting $x_y$ be the unique solution to the inner problem, we have $\nabla \phi(y) = b-Ax_y$. Note that
	$
	f(x_y) - f(\xs) = \phi(y) - \innr{y, \nabla\phi(y)} - \phi(y^\star) \leq \norm{y}\norm{\nabla \phi(y)}.
	$
	Thus, in this problem, the quantity $\norm{\nabla \phi(y)}$ serves as a measure of both primal optimality $f(x_y) \!-\! f(\xs)$ and feasibility $\norm{b\!-\!Ax_y}$, which is better than just measuring the function value.
	\item Matrix scaling \citep{rothblum1989scalings}
	is a convex problem and its goal is to find near-stationary points  \citep{allen-ZhuLOW17,cohen2017matrix}.
	\item Gradient norm is readily available, unlike other optimality measures ($f(x) - f(\xs)$ and $\norm{x - \xs}$), and is thus usable as a stopping criterion. This fact motivates the design of several parameter-free algorithms \citep{nesterov2013gradient,LinX14,ito2021nearly}, and their guarantees are established on the gradient norm.
	\item Designing schemes for minimizing the gradient norm can inspire new non-convex optimization methods. For example, SARAH \citep{SARAH} was designed for convex finite-sums with gradient-norm~measure, but was later discovered to be the near-optimal method for non-convex finite-sums~\citep{Fang18,pham2020proxsarah}.
\end{itemize}

Moreover, finding near-stationary points is often considered to be a harder task than minimizing the function value, because NAG has the optimal guarantee for $f(x) - f(\xs)$ but is only suboptimal for minimizing the gradient norm $\norm{\pf{x}}$.

\bgroup
\def\arraystretch{1.5}
\begin{table}[t]
	\caption{Finding near-stationary points $\norm{\pf{x}}\leq \epsilon$ of convex finite-sums.}
	\label{gradient_table}
	\centering
	\small
	\begin{tabular}{|c|l|c|C|}
		\hline
		&\multicolumn{1}{c|}{\textbf{Algorithm}}
		&\textbf{Complexity}\rule{0pt}{14pt}     & \textbf{Remark} \\
		\hhline{|-|-|-|-|} 
		\multirow{7}{*}{\shortstack[c]{\sf I\\\sf F\\\sf C}} &GD \hfilll\citep{OGM-G}     &$O(\frac{n}{\epsilon^2})$  & \\
		\hhline{|~|-|-|-|}
		&Regularized NAG* \hfilll\citep{carmon2021lower} &$O(\frac{n}{\epsilon}\log{\frac{1}{\epsilon}})$ &
		\\
		\hhline{|~|-|-|-|}
		&OGM-G \hfilll\citep{OGM-G}  &$O(\frac{n}{\epsilon})$  &$O(\frac{1}{\epsilon} + d)$ memory, optimal in $\epsilon$\\
		\hhline{|~|-|-|-|}
		&\cellcolor{Gray}M-OGM-G  \rule{35.7pt}{0pt}[Section \ref{sec:M-OGM-G}] &\cellcolor{Gray}$O(\frac{n}{\epsilon})$  &\cellcolor{Gray}$O(d)$ memory, optimal in $\epsilon$ \\ 
		\hhline{|~|-|-|-|}
		&L2S \hfilll\citep{li20a}  & $O(n+\frac{\sqrt{n}}{\epsilon^2})$ &Loopless variant of SARAH \citep{SARAH}\\
		\hhline{|~|-|-|-|}
		&Regularized Katyusha* \hfilll\citep{allen2018make} &$O((n+\frac{\sqrt{n}}{\epsilon})\log{\frac{1}{\epsilon}})$&Requires the knowledge of $\Delta_0$\\
		\hhline{|~|-|-|-|}
		&\cellcolor{Gray}R-Acc-SVRG-G* \rule{18.8pt}{0pt}[Section \ref{sec:R-Acc-SVRG-G}] &\cellcolor{Gray}$O((n\log{\frac{1}{\epsilon}} +\frac{\sqrt{n}}{\epsilon})\log{\frac{1}{\epsilon}})$ &\cellcolor{Gray}Without the knowledge of $\Delta_0$\\
		\hhline{|====|}
		\multirow{10}{*}{\shortstack[c]{\sf I\\\sf D\\\sf C}}
		&GD \hfilll\citep{nesterov2012make,computer-aided}   &$O(\frac{n}{\epsilon})$  &     \\
		\hhline{|~|-|-|-|}
		&NAG / NAG + GD \hfilll\citep{kim2018generalizing} / \citep{nesterov2012make}  &$O(\frac{n}{\epsilon^{2/3}})$ &      \\
		\hhline{|~|-|-|-|}
		&Regularized NAG* \hfilll\citep{nesterov2012make,ito2021nearly} & $O(\frac{n}{\sqrt{\epsilon}}\log{\frac{1}{\epsilon}})$ &  \\
		\hhline{|~|-|-|-|}
		&NAG + OGM-G \hfilll\citep{nesterov2020primal} &$O(\frac{n}{\sqrt{\epsilon}})$      & $O(\frac{1}{\sqrt{\epsilon}} + d)$ memory, optimal in $\epsilon$ \\
		\hhline{|~|-|-|-|}
		&\cellcolor{Gray}NAG + M-OGM-G \rule{1.5pt}{0pt}[Section \ref{sec:M-OGM-G}] &\cellcolor{Gray}$O(\frac{n}{\sqrt{\epsilon}})$      &\cellcolor{Gray}$O(d)$ memory, optimal in $\epsilon$\\
		\hhline{|~|-|-|-|}
		&Katyusha + L2S \hfilll[Appendix \ref{app:l2s-katyusha}] &$O(n\log{\frac{1}{\epsilon}} + \frac{\sqrt{n}}{\epsilon^{2/3}} )$ & \\
		\hhline{|~|-|-|-|}
		&\cellcolor{Gray}Acc-SVRG-G \rule{33.3pt}{0pt}[Section \ref{sec:Acc-SVRG-G}]  &\cellcolor{Gray}$O\Big(n\log{\frac{1}{\epsilon}}+ \min{\Big\{ \frac{n^{2/3}}{\epsilon^{2/3}}, \frac{\sqrt{n}}{\epsilon}\Big\}}\Big)$ &\cellcolor{Gray}$O(n\log{\frac{1}{\epsilon}}+ \sqrt{\frac{n}{\epsilon}})$ for function at the same time, simple and elegant \\
		\hhline{|~|-|-|-|}
		&Regularized Katyusha* \hfilll\citep{allen2018make} &$O((n+\sqrt{\frac{n}{\epsilon}})\log{\frac{1}{\epsilon}})$&Requires the knowledge of $R_0$\\
		\hhline{|~|-|-|-|}
		&\cellcolor{Gray}R-Acc-SVRG-G* \rule{18.8pt}{0pt}[Section \ref{sec:R-Acc-SVRG-G}] &\cellcolor{Gray}$O((n\log{\frac{1}{\epsilon}}+\sqrt{\frac{n}{\epsilon}})\log{\frac{1}{\epsilon}})$ &\cellcolor{Gray}Without the knowledge of $R_0$\\
		\hline
	\end{tabular}
	\\{\raggedleft\footnotesize{$^*$ Indirect methods (using regularization).}\par}
\end{table}
\egroup

In this work, we consider the unconstrained finite-sum problem: $\min_{x\in \R^d} {f(x) = \frac{1}{n}\sum_{i=1}^n{f_i(x)}}$, where each $f_i$ is $L$-smooth and convex. We focus on finding an $\epsilon$-stationary point of this objective function, i.e., a point with $\norm{\pf{x}}\leq \epsilon$. We use $\Xs$ to denote the set of optimal solutions, which is assumed to be nonempty. There are two different assumptions on the initial point $x_0$, namely, the Initial bounded-Function Condition (\textbf{IFC}): $f(x_0) - f(\xs) \leq \Delta_0$, and the Initial bounded-Distance Condition (\textbf{IDC}): $\norm{x_0 - \xs} \leq R_0$ for some $\xs \in \Xs$. This subtlety results in drastically different best achievable rates as studied in \citep{carmon2021lower,foster2019complexity}. Below we categorize existing techniques into three classes (relating to Table \ref{gradient_table}).

\begin{enumerate}[label=(\roman*),leftmargin=0.8cm]
	\item \textit{``IDC + IFC''.} \citet{nesterov2012make} showed that we can combine the guarantees of a method minimizing function value under IDC and a method finding near-stationary points under IFC to produce a faster one for minimizing gradient norm under IDC. For example, NAG produces $f(x_{K_1}) - f(\xs) = O(\frac{LR_0^2}{K_1^2})$ \citep{AGD1} and GD produces $\norm{\pf{x_{K_2}}}^2 = O\big(\frac{L(f(x_0) - f(\xs))}{K_2}\big)$ \citep{OGM-G} under IFC. Letting $x_0 = x_{K_1}$ and $K = K_1 + K_2$, by balancing the ratio of $K_1$ and $K_2$, we obtain the guarantee $\norm{\pf{x_K}}^2=O(\frac{L^2R_0^2}{K^3})$ for ``NAG + GD'' (same for ``NAG + OGM-G''). We point out that we can use this technique to combine the guarantees of Katyusha \citep{allen2017katyusha} and SARAH\footnote{We adopt a loopless variant of SARAH \citep{li20a}, which has a refined analysis for general convex objectives.} \citep{SARAH}; see Appendix~\ref{app:l2s-katyusha}.
	\item \textit{Regularization.} \citet{nesterov2012make} used  NAG (the strongly convex variant) to solve the regularized objective, and showed that it achieves near-optimal complexity (optimal up to log factors). Inspired by this technique, \citet{allen2018make} proposed recursive regularization for stochastic approximation algorithms, which also achieves near-optimal complexities \citep{foster2019complexity}.
	\item \textit{Direct methods.} Due to the lack of insight, existing direct methods are mostly derived or analyzed with the help of computer-aided tools \citep{KimF18,kim2018generalizing,computer-aided,OGM-G}. The computer-aided approach was pioneered by \citet{PEP}, who introduced the performance estimation problem (PEP). The only known optimal method OGM-G \citep{OGM-G} was designed based on the PEP approach.
\end{enumerate}

Observe that since $f(x) - f(\xs) \leq \norm{\pf{x}}\norm{x - \xs}$, the lower bound for finding near-stationary points must be of the same order as for minimizing function value \citep{AGD3}. Thus, under IDC, the lower bound is $\Omega(n+\sqrt{\frac{n}{\epsilon}})$ due to \citep{woodworthS16}. Under IFC, we can establish an $\Omega(n+\frac{\sqrt{n}}{\epsilon})$ lower bound using the techniques in \citep{carmon2021lower,woodworthS16}. 
The main contributions of this work are three new algorithmic schemes that improve the practicalities of existing methods, which is summarized below (highlighted in~Table~\ref{gradient_table}).
\begin{itemize}[leftmargin=15pt]
	\item (Section \ref{sec:OGM-G}) We propose a memory-saving variant of OGM-G for the deterministic case ($n=1$), which does not require pre-computed and stored parameters. The derivation of the new variant is inspired by the numerical solution to a PEP problem. 
	\item (Section \ref{sec:Acc-SVRG-G}) We propose a new accelerated SVRG \citep{SVRG,Prox_SVRG} variant that can \textit{simultaneously} achieve fast rates for minimizing both the gradient norm and function value, that is, $O(n\log{\frac{1}{\epsilon}}+ \min{\{ \frac{n^{2/3}}{\epsilon^{2/3}}, \frac{\sqrt{n}}{\epsilon}\}})$ complexity for gradient norm and $O(n\log{\frac{1}{\epsilon}}+ \sqrt{\frac{n}{\epsilon}})$ complexity for function value. Other stochastic approaches in Table \ref{gradient_table} do not have this property. 
	\item (Section \ref{sec:R-Acc-SVRG-G}) We propose an adaptively regularized accelerated SVRG variant, which does not require the knowledge of $R_0$ or $\Delta_0$ and achieves a near-optimal complexity under IDC or IFC. 
\end{itemize}
We put in extra efforts to make the proposed schemes as simple and elegant as possible. We believe that the simplicity makes extensions of the new schemes easier.

\section{Preliminaries}

Throughout this paper, we use $\innr{\cdot,\cdot}$ and $\norm{\cdot}$ to denote the inner product and the Euclidean norm, respectively. We let $[n]$ denote the set $\{1,2,\ldots,n\}$, $\mathbb{E}$ denote the total expectation and $\mathbb{E}_{i_k}$ denote the expectation with respect to a random sample $i_k$. We say that a function $f: \R^d \rightarrow \R$ is \textit{$L$-smooth} if it has $L$-Lipschitz continuous gradients, i.e.,
$
\forall x, y\in \R^d, \norm{\pf{x} - \pf{y}} \leq L\norm{x - y}.
$
A continuously differentiable $f$ is called \textit{$\mu$-strongly convex} if
$
\forall x, y\in \R^d, f(x) - f(y) - \innr{\pf{y}, x - y} \geq \frac{\mu}{2} \norm{x - y}^2.
$
Other equivalent definitions of these two assumptions can be found in the textbook \citep{AGD3}. The following is an important consequence of a function $f$ being $L$-smooth and convex. 
\begin{equation}\label{interpolation_c}
\forall x, y\in \R^d,
  f(x) - f(y) - \innr{\pf{y}, x - y}\geq \frac{1}{2L} \norm{\pf{x} - \pf{y}}^2.
\end{equation}
We call it \textit{interpolation condition} at $(x,y)$ following \citep{taylor2017smooth}. If $f$ is both $L$-smooth and $\mu$-strongly convex, we can define a ``shifted'' function $h(x) = f(x) - f(\xs) - \frac{\mu}{2} \norm{x - \xs}^2$ following \citep{zhou2020boosting}. It can be easily verified that $h$ is $(L-\mu)$-smooth and convex, and thus from \eqref{interpolation_c},
\begin{equation}\label{interpolation_sc}
	\forall x, y\in \R^d,
	h(x) - h(y) - \innr{\ph{y}, x - y}\geq \frac{1}{2(L-\mu)} \norm{\ph{x} - \ph{y}}^2,
\end{equation}
which is equivalent to the \textit{strongly convex interpolation condition} discovered in \citep{taylor2017smooth}.

Oracle complexity (or simply complexity) refers to the required number of stochastic gradient $\nabla f_i$ computations to find an $\epsilon$-accurate solution.

\section{OGM-G: ``Momentum'' Reformulation and a Memory-Saving Variant}
\label{sec:OGM-G}

In this section, we focus on the IFC setting, that is, $f(x_0) - f(\xs) \leq \Delta_0$. We use $N$ to denote the total number of iterations (each computes a full gradient $\nabla f$). Proofs in this section are given in Appendix~\ref{app:proof-ogm}. Recall that OGM-G has the following updates~\citep{OGM-G}. Let $y_0 = x_0$. For $k=0,\ldots, N-1,$
\begin{equation}\label{alg:o-OGM-G}
\begin{aligned}
	y_{k+1} &= x_k - \frac{1}{L} \pf{x_k},\\
	x_{k+1} &= y_{k+1} + \frac{(\theta_k - 1)(2\theta_{k+1} - 1)}{\theta_{k}(2\theta_k - 1)}(y_{k+1} - y_k) + \frac{2\theta_{k+1} - 1}{2\theta_{k} - 1}(y_{k+1} - x_k),
\end{aligned} 
\end{equation}
where the sequence $\{\theta_k\}$ is recursively defined: $\theta_N = 1$ and 
$
\begin{cases}
	\theta_k^2 - \theta_k = \theta_{k+1}^2 & k = 1\ldots N-1, \\
	\theta_0^2 - \theta_0 = 2\theta_1^2 &\text{otherwise.}
\end{cases}
$ 

OGM-G was discovered from the numerical solution to an SDP problem and its analysis is to show that the step coefficients in \eqref{alg:o-OGM-G} specify a feasible solution to the SDP problem. While this analysis is natural for the PEP approach, it is hard to understand how each coefficient affects the rate, especially if one wants to generalize the scheme. Here we provide a simple algebraic analysis for OGM-G. 

We start with a reformulation\footnote{It can be verified that this scheme is equivalent to the original one \eqref{alg:o-OGM-G} through $v_{k} = \frac{1}{(2\theta_k - 1)\theta_k^2} (y_k - x_k)$.} of OGM-G in Algorithm \ref{alg:OGM-G}, which aims to simplify the proof. We adopt a consistent sequence $\{\theta_k\}$: $\theta_N = 1$ and $\theta_k^2 - \theta_k = \theta_{k+1}^2$, $k = 0\ldots N-1$, which only costs a constant factor.\footnote{The original guarantee of OGM-G can be recovered if we set $\theta_0^2 - \theta_0 = 2\theta_1^2$.} Interestingly, the reformulated scheme resembles the heavy-ball momentum method \citep{polyak1964some}. However, it can be shown that Algorithm \ref{alg:OGM-G} is not covered by the heavy-ball momentum scheme. Defining $\theta_{N+1}^2 = \theta_{N}^2 - \theta_{N} = 0$, we provide the one-iteration analysis in the following proposition:

\begin{algorithm}[t]
	\caption{OGM-G: ``Momentum'' reformulation}
	\label{alg:OGM-G}
	\renewcommand{\algorithmicrequire}{\textbf{Input:}}
	\renewcommand{\algorithmicensure}{\textbf{Initialize:}}
	\begin{algorithmic}[1]
		\Require initial guess $x_0 \in \R^d$, total iteration number $N$.
		\Ensure vector $v_0 = \mathbf{0}$, scalars $\theta_N = 1$ and $\theta_k^2 - \theta_k = \theta_{k+1}^2$, for $k = 0\ldots N-1$.
		\For{$k=0, \ldots, N-1$}
		\State $v_{k+1} = v_k + \frac{1}{L\theta_k\theta_{k+1}^2}\pf{x_k}$.
		\State $x_{k+1} = x_k - \frac{1}{L}\pf{x_k} - (2\theta_{k+1}^3 - \theta_{k+1}^2) v_{k+1}$.
		\EndFor
		\renewcommand{\algorithmicensure}{\textbf{Output:}}
		\Ensure $x_N$.
	\end{algorithmic}
\end{algorithm}

\begin{propositions}\label{prop:OGM-G}
	In Algorithm \ref{alg:OGM-G}, the following holds at any iteration $k\in\{0,\ldots, N-1\}$. 
	\begin{equation}\label{T1}
		\begin{aligned}
			A_k + B_{k+1} + C_{k+1} + E_{k+1} \leq{}& A_{k+1} + B_k + C_k + E_k- \theta_{k+1}\innr{\pf{x_{k+1}},  v_{k+1}} \\
			&+ \sum_{i=k+1}^{N}{\frac{\theta_i}{L\theta_k\theta_{k+1}^2}\innr{\pf{x_k},\pf{x_i}}},
		\end{aligned} 
	\end{equation}
	where $A_k\triangleq \frac{1}{\theta_k^2} (f(x_N) - f(\xs) - \frac{1}{2L}\norm{\pf{x_N}}^2)$, $B_k \triangleq \frac{1}{\theta_k^2} (f(x_k) - f(\xs))$, $C_k \triangleq \frac{1}{2L\theta_k^2}\norm{\pf{x_k}}^2$ and $E_k \triangleq \frac{\theta_{k+1}^2}{\theta_k}\innr{\pf{x_k}, v_k}$.
\end{propositions}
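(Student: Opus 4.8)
The plan is to read \eqref{T1} as a one-step potential inequality and to reduce it, by purely algebraic manipulation, to a single use of the interpolation condition \eqref{interpolation_c} at just two pairs of iterates, $(x_k,x_{k+1})$ and $(x_N,x_k)$. Write $g_k\triangleq\pf{x_k}$. First I would move every term of \eqref{T1} to one side and regroup the function-value parts as $G_k\triangleq B_k+C_k-A_k$; using only the defining recursion $\theta_{k+1}^2=\theta_k^2-\theta_k$ (hence $\frac{1}{\theta_{k+1}^2}-\frac{1}{\theta_k^2}=\frac{1}{\theta_k\theta_{k+1}^2}$ and $\frac{\theta_{k+2}^2}{\theta_{k+1}}=\theta_{k+1}-1$), the $f(\xs)$ contributions cancel and one is left with $G_k=\frac{1}{\theta_k^2}\big(f(x_k)-f(x_N)+\frac{1}{2L}\norm{g_k}^2+\frac{1}{2L}\norm{g_N}^2\big)$, so that \eqref{T1} is equivalent to
\[
G_{k+1}-G_k+E_{k+1}-E_k+\theta_{k+1}\innr{g_{k+1},v_{k+1}}\;\leq\;\sum_{i=k+1}^{N}\frac{\theta_i}{L\theta_k\theta_{k+1}^2}\innr{g_k,g_i}.
\]

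Next, writing $u_k=\theta_k^2G_k$ I would split $G_{k+1}-G_k=\frac{1}{\theta_{k+1}^2}(u_{k+1}-u_k)+\frac{1}{\theta_k\theta_{k+1}^2}u_k$ and bound the two pieces by \eqref{interpolation_c}: applied at $(x_k,x_{k+1})$ it gives $u_{k+1}-u_k\leq\innr{g_{k+1},x_{k+1}-x_k}+\frac{1}{L}\innr{g_{k+1},g_k}-\frac{1}{L}\norm{g_k}^2$, and at $(x_N,x_k)$ it gives $u_k\leq\innr{g_k,x_k-x_N}+\frac{1}{L}\innr{g_N,g_k}$ (in both cases the $\frac{1}{2L}\norm{\cdot}^2$ slack of \eqref{interpolation_c} is folded exactly into the cross term, so these bounds lose nothing). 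I would then substitute the algorithm's updates $x_{k+1}-x_k=-\frac{1}{L}g_k-\theta_{k+1}^2(2\theta_{k+1}-1)v_{k+1}$ and $v_{k+1}=v_k+\frac{1}{L\theta_k\theta_{k+1}^2}g_k$: the $\innr{g_{k+1},g_k}$ terms cancel, and combining the leftover $\innr{g_{k+1},v_{k+1}}$ with $E_{k+1}-E_k=(\theta_{k+1}-1)\innr{g_{k+1},v_{k+1}}-\frac{\theta_{k+1}^2}{\theta_k}\innr{g_k,v_k}$ and the stray $+\theta_{k+1}\innr{g_{k+1},v_{k+1}}$, all $\innr{g_{k+1},v_{k+1}}$ terms vanish because $-(2\theta_{k+1}-1)+(\theta_{k+1}-1)+\theta_{k+1}=0$. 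What remains is a linear combination of $\norm{g_k}^2$, $\innr{g_k,v_k}$, $\innr{g_k,x_k-x_N}$, $\innr{g_N,g_k}$ and the target sum, every term carrying the common factor $g_k$.

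The crux is then the identity
\[
\innr{g_k,x_N-x_k}=-\theta_{k+1}^4\innr{g_k,v_k}-\frac{\theta_k}{L}\norm{g_k}^2-\frac{1}{L}\sum_{i=k+1}^{N-1}\theta_i\innr{g_k,g_i},
\]
after which the inequality from the previous paragraph collapses to $0\leq0$ (the $i=N$ summand contributes $\theta_N=1$, cancelling $\innr{g_N,g_k}$). I would prove this identity by unrolling $x_N-x_k=\sum_{j=k}^{N-1}(x_{j+1}-x_j)$ together with $v_{j+1}=v_k+\sum_{l=k}^{j}\frac{1}{L\theta_l\theta_{l+1}^2}g_l$, inserting the update for $x_{j+1}-x_j$, and performing an Abel summation; the inner sums telescope via three consequences of the recursion, namely $\theta_m^2(2\theta_m-1)=\theta_m^4-\theta_{m+1}^4$ (square $\theta_{m+1}^2=\theta_m^2-\theta_m$), $\theta_j+\theta_{j+1}^2=\theta_j^2$, and $\theta_{N+1}^2=0$, which reduces the coefficient of each $\innr{g_k,g_j}$ to $-\theta_j/L$. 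This bookkeeping — unrolling the two coupled recursions for $x$ and $v$ and tracking the coefficient of every inner-product monomial through the Abel summation — is the main obstacle; everything before and after it is routine. Once the identity is in hand, substituting it back into the residual proves \eqref{T1}, the only inequalities used being the two instances of \eqref{interpolation_c}.
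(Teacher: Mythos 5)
Your proof is correct and takes essentially the same route as the paper's: both combine the reformulated interpolation condition at $(x_k,x_{k+1})$ with weight $\frac{1}{\theta_{k+1}^2}$ and at $(x_N,x_k)$ with weight $\frac{1}{\theta_k\theta_{k+1}^2}$, substitute the algorithm's updates, and unroll $x_k - x_N = \theta_{k+1}^4 v_k + \frac{1}{L}\sum_{i=k}^{N-1}\theta_i\pf{x_i}$ using the recursion $\theta_{k+1}^2=\theta_k^2-\theta_k$ (equivalently $2\theta_m^3-\theta_m^2=\theta_m^4-\theta_{m+1}^4$). Your presentation (moving everything to one side, showing the residual collapses to $0\le 0$ via the unrolling identity) is an equivalent reorganization of the same calculation.
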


\begin{pro-remark}
A recent work \citep{diakonikolas2021potential} also conducted an algebraic analysis of OGM-G under a potential function framework. Their potential function decrease can be directly obtained from Proposition~\ref{prop:OGM-G} by summing up \eqref{T1}. By contrast, our ``momentum'' vector $\{v_k\}$  naturally merges into the analysis, which significantly simplifies the analysis. Moreover, it provides a better interpretation on how OGM-G utilizes the past gradients to achieve acceleration. A concurrent work \citep{lee2021geometric} discovered the potential function of OGM-G while their analysis is much more complicated.
\end{pro-remark}

From \eqref{T1}, we see that only the last two terms do not telescope. Note that the ``momentum'' vector is a weighted sum of the past gradients, i.e., $v_{k+1} = \sum_{i=0}^{k}{\frac{1}{L\theta_i\theta_{i+1}^2}\pf{x_i}}$. If we sum the terms up from $k=0,\ldots, N-1$, it can be verified that they exactly sum up to $0$. Then, by telescoping the remaining terms, we obtain the final convergence guarantee.

\begin{theorems}\label{thm:OGM-G} The output of Algorithm \ref{alg:OGM-G} satisfies
	$
	\norm{\pf{x_N}}^2\leq \frac{8L \Delta_0}{(N+2)^2}.
	$
\end{theorems}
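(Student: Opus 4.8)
The plan is to telescope the one-iteration bound \eqref{T1} from Proposition~\ref{prop:OGM-G}. Summing \eqref{T1} over $k = 0, \ldots, N-1$, the left-hand side collects $\sum_{k=0}^{N-1} A_k + \sum_{k=1}^{N}(B_k + C_k + E_k)$, while the right-hand side contributes $\sum_{k=1}^{N} A_k + \sum_{k=0}^{N-1}(B_k + C_k + E_k)$ together with the two non-telescoping gradient cross-term sums. After cancelling the common portions this leaves
\[
\begin{aligned}
A_0 + B_N + C_N + E_N
&\le A_N + B_0 + C_0 + E_0 \\
&\quad- \sum_{k=0}^{N-1}\theta_{k+1}\innr{\pf{x_{k+1}}, v_{k+1}}
+ \sum_{k=0}^{N-1}\sum_{i=k+1}^{N}\frac{\theta_i}{L\theta_k\theta_{k+1}^2}\innr{\pf{x_k}, \pf{x_i}}.
\end{aligned}
\]

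The first real step is to show that the last two sums cancel. Unrolling the recursion for $v$ with $v_0 = \mathbf{0}$ gives $v_{k+1} = \sum_{j=0}^{k}\frac{1}{L\theta_j\theta_{j+1}^2}\pf{x_j}$, so $\sum_{k=0}^{N-1}\theta_{k+1}\innr{\pf{x_{k+1}}, v_{k+1}}$ expands (after substituting $p = k+1$) into $\sum_{0\le j<p\le N}\frac{\theta_p}{L\theta_j\theta_{j+1}^2}\innr{\pf{x_p}, \pf{x_j}}$, which is term-by-term identical to $\sum_{k=0}^{N-1}\sum_{i=k+1}^{N}\frac{\theta_i}{L\theta_k\theta_{k+1}^2}\innr{\pf{x_k}, \pf{x_i}}$ under the relabeling $(j,p)\leftrightarrow(k,i)$. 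I expect this index bookkeeping — lining up the summation ranges and the coefficients — to be the only delicate point, though it is ultimately mechanical.

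It then remains to evaluate the boundary terms. Since $\theta_N = 1$ we have $A_N = f(x_N) - f(\xs) - \frac{1}{2L}\norm{\pf{x_N}}^2$, $B_N = f(x_N) - f(\xs)$ and $C_N = \frac{1}{2L}\norm{\pf{x_N}}^2$, so $A_N - B_N - C_N = -\frac{1}{L}\norm{\pf{x_N}}^2$; moreover $E_N = 0$ because $\theta_{N+1}^2 = 0$, and $E_0 = 0$ because $v_0 = \mathbf{0}$. Substituting into the reduced inequality and rearranging yields
\[
\frac{1}{L}\norm{\pf{x_N}}^2 \;\le\; \frac{1}{\theta_0^2}\big(f(x_0) - f(\xs)\big) + \frac{1}{2L\theta_0^2}\norm{\pf{x_0}}^2 \;-\; A_0 .
\]
Here $A_0 \ge 0$ by the interpolation condition \eqref{interpolation_c} at $(x_N, \xs)$ (recall $\pf{\xs} = 0$), so the $-A_0$ term only helps and may be dropped; the same inequality at $(x_0, \xs)$ gives $\frac{1}{2L}\norm{\pf{x_0}}^2 \le f(x_0) - f(\xs) \le \Delta_0$. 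Hence $\norm{\pf{x_N}}^2 \le 2L\Delta_0/\theta_0^2$.

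Finally I would lower-bound $\theta_0$. Solving $\theta_k^2 - \theta_k = \theta_{k+1}^2$ for the positive root gives $\theta_k = \tfrac{1}{2}\big(1 + \sqrt{1 + 4\theta_{k+1}^2}\big)$, and a backward induction from $\theta_N = 1$ shows $\theta_k \ge \frac{N-k+2}{2}$ for all $k$, using $\sqrt{1 + (N-k+1)^2} \ge N-k+1$ in the inductive step. In particular $\theta_0^2 \ge (N+2)^2/4$, and plugging this into $\norm{\pf{x_N}}^2 \le 2L\Delta_0/\theta_0^2$ gives $\norm{\pf{x_N}}^2 \le \frac{8L\Delta_0}{(N+2)^2}$, completing the proof.
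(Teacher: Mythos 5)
Your proof is correct and follows essentially the same route as the paper: telescope the one-iteration bound from Proposition~\ref{prop:OGM-G}, verify that the non-telescoping cross terms cancel (the paper phrases this via a lower-triangular matrix argument, you do it by unrolling $v_{k+1}$ and relabeling indices — same observation), evaluate the boundary terms using $\theta_N = 1$, $\theta_{N+1}^2 = 0$, $v_0 = \mathbf{0}$, apply the interpolation condition at $(x_0, \xs)$ and $(x_N, \xs)$, and conclude with the lower bound $\theta_0 \geq (N+2)/2$.
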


We observe two drawbacks of OGM-G (which have been similarly pointed out in \citep{diakonikolas2021potential,lee2021geometric}): (1) it requires storing a pre-computed parameter sequence, which costs $O(\frac{1}{\epsilon})$ floats; (2)  except for the last iterate, all other iterates do not have properly upper-bounded gradient norms. 
We resolve these issues by proposing another parameterization of Algorithm \ref{alg:OGM-G} in the next subsection.

\subsection{Memory-Saving OGM-G}
\label{sec:M-OGM-G}

\begin{algorithm}[t]
	\caption{M-OGM-G: Memory-saving OGM-G}
	\label{alg:M-OGM-G}
	\renewcommand{\algorithmicrequire}{\textbf{Input:}}
	\renewcommand{\algorithmicensure}{\textbf{Initialize:}}
	\begin{algorithmic}[1]
		\Require initial guess $x_0 \in \R^d$, total iteration number $N$.
		\Ensure vector $v_0 = \mathbf{0}$.
		\For{$k=0, \ldots, N-1$}
		\State $v_{k+1} = v_k + \frac{12}{L(N-k+1)(N-k+2)(N-k+3)}\pf{x_k}$.
		\State $x_{k+1} = x_k - \frac{1}{L}\pf{x_k} - \frac{(N-k)(N-k+1)(N-k+2)}{6} v_{k+1}$.
		\EndFor
		\renewcommand{\algorithmicensure}{\textbf{Output:}}
		\Ensure $x_N$ or $\argmin_{x\in \{x_0,\ldots, x_N\}} {\norm{\pf{x}}}$.
	\end{algorithmic}
\end{algorithm}

A straightforward idea to resolve the aforementioned issues is to generalize Algorithm \ref{alg:OGM-G}. However, we find it rather difficult since the parameters in the analysis are rather strict (despite that the proof is already simple). We choose to rely on computer-aided techniques \citep{PEP}. The derivation of this variant (Algorithm \ref{alg:M-OGM-G}) is based on the following numerical~experiment.

\paragraph{Numerical experiment.}\!\!\! OGM-G was discovered when considering the relaxed PEP problem \citep{OGM-G}:
\begin{equation}
	\label{PEP-P}\tag{P}
	\begin{aligned}
		&\max_{\substack{\pf{x_0},\ldots, \pf{x_N} \in \R^d \\ f(x_0),\ldots, f(x_N),f(\xs)\in \R}} {\norm{\pf{x_N}}^2} \\
		\text{subject to} &\begin{cases}
			\text{interpolation condition \eqref{interpolation_c} at }(x_k, x_{k+1}), \ \ k=0,\ldots, N-1,\\
			\text{interpolation condition \eqref{interpolation_c} at }(x_N, x_k), \quad\,\, k=0,\ldots, N-1,\\
			\text{interpolation condition \eqref{interpolation_c} at }(x_N, \xs),  \ \ f(x_0) - f(\xs) \leq \Delta_0,
		\end{cases}
	\end{aligned}
\end{equation}
where the sequence $\{x_k\}$ is defined as $x_{k+1} = x_k - \frac{1}{L}\sum_{i=0}^k{h_{k+1,i}\pf{x_i}}, k = 0,\ldots, N-1$ for some step coefficients $h\in \R^{N(N+1)/2}$. Given $N$, the step coefficients of OGM-G correspond to a numerical solution to the problem: $\argmin_{h} \{\text{Lagrangian dual of \eqref{PEP-P}}\}$, which is denoted as (HD). Conceptually, solving problem (HD) would give us the fastest possible step coefficients under the constraints.\footnote{However, since problem (HD) is non-convex, we can only obtain local solutions.} We expect there to be some constant-time slower schemes, which are neglected when solving (HD). To identify them, we relax a set of interpolation conditions in problem \eqref{PEP-P}:
\[
f(x_N) - f(x_k) - \innr{\pf{x_k}, x_N - x_k} \geq \frac{1}{2L} \norm{  \pf{x_N} - \pf{x_k}}^2 - \rho \norm{\pf{x_k}}^2,
\]
for $k=0,\ldots, N-1$ and some $\rho>0$. After this relaxation, solving (HD) will no longer give us the step coefficients of OGM-G. Moreover, the subtracted term $\rho\norm{\pf{x_k}}^2$ forces the PEP tool to not ``utilize'' it (to cancel out other terms) when searching for step coefficients. Since such a term is not ``utilized'' in each of the $N$ interpolation conditions, after summation, these terms appear on the left hand side of \eqref{M-OGM-G-rate}, which gives upper bounds to the gradient norms evaluated at intermediate iterates. By trying different $\rho$ and checking the dependence on $N$, we discover Algorithm \ref{alg:M-OGM-G} when $\rho = \frac{1}{2L}$. Similar to our analysis of OGM-G, we provide a simple algebraic analysis in the following theorem.
\begin{theorems}\label{thm:M-OGM-G}
	Define $\delta_{k+1} \!\triangleq\! \frac{12}{(N-k+1)(N-k+2)(N-k+3)}, k=0,\ldots, N$. In Algorithm \ref{alg:M-OGM-G}, it holds that
	\begin{equation}\label{M-OGM-G-rate}
	\sum_{k=0}^{N}{\frac{\delta_{k+1}}{2}\norm{\pf{x_k}}^2} \leq \frac{12L \Delta_0}{(N+2)(N+3)}.
	\end{equation}
\end{theorems}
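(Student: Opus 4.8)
The plan is to reproduce, with one twist, the algebraic telescoping argument behind Proposition~\ref{prop:OGM-G} and Theorem~\ref{thm:OGM-G}. The twist is that the interpolation conditions \eqref{interpolation_c} at the pairs $(x_N, x_k)$ are used in the relaxed form with $\rho = \frac{1}{2L}$, namely
\[
f(x_N) - f(x_k) - \innr{\pf{x_k}, x_N - x_k} \;\geq\; \tfrac{1}{2L}\norm{\pf{x_N} - \pf{x_k}}^2 - \tfrac{1}{2L}\norm{\pf{x_k}}^2, \qquad k = 0,\ldots,N-1,
\]
exactly as motivated before the theorem statement. The subtracted $\frac{1}{2L}\norm{\pf{x_k}}^2$ is the ``unused'' slack that, after being scaled by the right multiplier, accumulates into $\sum_{k=0}^{N}\frac{\delta_{k+1}}{2}\norm{\pf{x_k}}^2$ on the left of \eqref{M-OGM-G-rate}. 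Write $a_{k+1} \triangleq \frac{(N-k)(N-k+1)(N-k+2)}{6}$ for the momentum coefficient, so the updates are $v_{k+1} = v_k + \frac{\delta_{k+1}}{L}\pf{x_k}$ and $x_{k+1} = x_k - \frac{1}{L}\pf{x_k} - a_{k+1} v_{k+1}$, and record that $v_{k+1} = \sum_{i=0}^{k}\frac{\delta_{i+1}}{L}\pf{x_i}$.

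First I would derive the one-iteration inequality, the M-OGM-G analogue of \eqref{T1}. Define a potential $\Phi_k$ as a nonnegative combination of $f(x_N) - f(\xs) - \frac{1}{2L}\norm{\pf{x_N}}^2$, $f(x_k) - f(\xs)$, $\norm{\pf{x_k}}^2$, and $\innr{\pf{x_k}, v_k}$ --- the roles played by $A_k, B_k, C_k, E_k$ in Proposition~\ref{prop:OGM-G} --- whose coefficients are explicit rational functions of $N-k$ built from $\delta_{k+1}$ and $a_{k+1}$; the precise coefficients are obtained by imposing that the leftover terms telescope. Adding \eqref{interpolation_c} at $(x_k, x_{k+1})$ and the relaxed condition above at $(x_N, x_k)$ with suitable multipliers, and substituting the two update rules, one obtains an inequality of the shape
\[
\Phi_{k+1} + \tfrac{\delta_{k+1}}{2}\norm{\pf{x_k}}^2 \;\leq\; \Phi_k + (\text{non-telescoping inner-product terms}),
\]
where the residual consists only of terms $\innr{\pf{x_k}, \pf{x_i}}$ with $i > k$ together with a term proportional to $\innr{\pf{x_{k+1}}, v_{k+1}}$, just as in \eqref{T1}. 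I expect this to be the main obstacle: it requires checking a small collection of polynomial identities in $N-k$ linking $\delta_k$, $\delta_{k+1}$, and $a_{k+1}$ (the counterparts of $\theta_k^2 - \theta_k = \theta_{k+1}^2$), and since the multipliers came from the PEP relaxation rather than from structural insight, the honest route is to write them out explicitly and verify feasibility by direct computation.

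Then I would sum this inequality over $k = 0,\ldots,N-1$. Because $v_{k+1}$ is precisely the weighted sum $\sum_{i=0}^{k}\frac{\delta_{i+1}}{L}\pf{x_i}$ of past gradients, the inner-product residuals cancel identically upon summation --- the same mechanism noted right after Proposition~\ref{prop:OGM-G} --- and the $\Phi_k$ terms telescope to $\Phi_N - \Phi_0$. For the endpoints: $\Phi_0$ reduces to a constant multiple of $f(x_0) - f(\xs) \leq \Delta_0$ since $v_0 = \mathbf{0}$ (the $\norm{\pf{x_0}}^2$ piece being supplied by the $k = 0$ summand of the accumulated gradient-norm sum), while $\Phi_N$ is bounded below by $\frac{\delta_{N+1}}{2}\norm{\pf{x_N}}^2 = \norm{\pf{x_N}}^2$ plus a nonnegative remainder, using \eqref{interpolation_c} at $(x_N, \xs)$, which gives $f(x_N) - f(\xs) - \frac{1}{2L}\norm{\pf{x_N}}^2 \geq 0$. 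Carrying the constants through (once more a routine computation with $\frac{1}{\delta_1} = \frac{(N+1)(N+2)(N+3)}{12}$ and the like) delivers the bound $\frac{12L\Delta_0}{(N+2)(N+3)}$ in \eqref{M-OGM-G-rate}; the output guarantee of Algorithm~\ref{alg:M-OGM-G} is then a by-product, since the $k=N$ summand is already $\norm{\pf{x_N}}^2$ and $\sum_{k=0}^{N}\delta_{k+1} \geq 2$ bounds the best iterate.
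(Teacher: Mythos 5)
Your route is, at the level of strategy, identical to the paper's: pair interpolation conditions at $(x_k,x_{k+1})$ and $(x_N,x_k)$ with iteration-dependent multipliers, substitute the algorithm's update rules, telescope, and observe that the residual inner products cancel upon summation by the same counting argument as for $\mathcal{R}_1$ in Proposition~\ref{prop:OGM-G}. But two points deserve attention. First, a conceptual correction: the paper's proof applies the \emph{unrelaxed} interpolation conditions \eqref{interpolation_c} (in reformulated form \eqref{interpolation_cr}) throughout. The relaxed form with $\rho = \tfrac{1}{2L}$ is only the heuristic used in the PEP numerical search to \emph{discover} the step coefficients; if you literally substituted the weaker inequality into the analysis you could only obtain a weaker conclusion. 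What actually happens is that the coefficients, having been designed so that the relaxed certificate suffices, leave a $\tfrac{\delta_{k+1}}{2L}\norm{\pf{x_k}}^2$ term uncancelled when you feed in the full condition, and that uncancelled slack is precisely what accumulates on the left. Second, the part you defer to ``direct computation'' is essentially the entire content of the proof. The multipliers are $\tfrac{1}{\tau_{k+1}}$ and $\delta_{k+1}$ for the $(x_k,x_{k+1})$ and $(x_N,x_k)$ conditions respectively, with $\tau_k \triangleq \tfrac{(N-k+2)(N-k+3)}{6}$; the identity replacing $\theta_k^2-\theta_k=\theta_{k+1}^2$ is $\delta_{k+1} = \tfrac{1}{\tau_{k+1}} - \tfrac{1}{\tau_k}$, and the momentum sum is handled via $\mathcal{Q}(j)-\mathcal{Q}(j-1)=4j(j+1)(j+2)$ with $\mathcal{Q}(j)\triangleq j(j+1)(j+2)(j+3)$. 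One more small inaccuracy: the $k=0$ summand $\tfrac{\delta_1}{2L}\norm{\pf{x_0}}^2$ of the accumulated sum has an $O(1/N^3)$ coefficient and does \emph{not} by itself absorb the $-\tfrac{1}{2L\tau_0}\norm{\pf{x_0}}^2$ coming out of the telescoping (which is $O(1/N^2)$); the paper closes this via $D_0\geq\tfrac{1}{2L}\norm{\pf{x_0}}^2$, in addition to $D_N\geq\tfrac{1}{2L}\norm{\pf{x_N}}^2$ which you do correctly invoke, at the cost of a factor of $2$ in the $\Delta_0/\tau_0$ contribution.
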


\begin{remark}
	From \eqref{M-OGM-G-rate}, we can directly conclude that $\forall k \in \{0, \ldots, N\}, \norm{\pf{x_k}}^2 = O(\frac{L\Delta_0}{N^2 \delta_{k+1}})$ and thus, the rate (in terms of $N$) on the last iterate is optimal (since $\delta_{N+1} = 2$). Moreover, the minimum gradient also achieves the optimal rate since
	\[
		\min_{k\in\{0,\ldots,N\}} {\norm{\pf{x_k}}^2} 
		\leq{} \frac{1}{\sum_{k=0}^N{\frac{\delta_{k+1}}{2}}} \sum_{k=0}^N{\frac{\delta_{k+1}}{2}\norm{\pf{x_k}}^2} \leq{} \frac{8L \Delta_0}{(N+2)(N+3) - 2}.
	\] 
\end{remark}

Clearly, the parameters of this variant can be computed on the fly and from the above remark, each iterate has an upper-bounded gradient norm. The constructions in \citep{diakonikolas2021potential,lee2021geometric} all require pre-computed and stored sequences, which seems to be unavoidable in their analysis as admitted in \citep{diakonikolas2021potential}. Our discovery is another example of the powerfulness of computer-aided methodology, which finds proofs that are difficult or even impossible to find with bare hands. 
We can extend the benefits into the IDC setting using the ideas in \citep{nesterov2012make} as summarized below.

\begin{corollarys}[IDC case]\label{coroll:NAG+M-OGM-G}
	If we first run $N/2$ iterations of NAG and then continue with $N/2$ iterations of Algorithm \ref{alg:M-OGM-G}, we obtain an output satisfying $\norm{\pf{x_N}}^2 = O\big(\frac{L^2R_0^2}{N^4}\big)$. 
\end{corollarys}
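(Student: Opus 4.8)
The plan is to apply the ``IDC + IFC'' combination technique of \citet{nesterov2012make} described in item (i) of the introduction, using NAG as the function-value method under IDC and Algorithm~\ref{alg:M-OGM-G} as the gradient-norm method under IFC. First I would run $N/2 = K_1$ iterations of NAG starting from $x_0$ with $\norm{x_0 - \xs} \leq R_0$. By the classical guarantee of NAG \citep{AGD1}, the iterate $x_{K_1}$ satisfies $f(x_{K_1}) - f(\xs) = O\bigl(\frac{L R_0^2}{K_1^2}\bigr) = O\bigl(\frac{L R_0^2}{N^2}\bigr)$. Set $\Delta_0 \triangleq f(x_{K_1}) - f(\xs)$; then the IFC holds at the point $x_{K_1}$ with this value of $\Delta_0$.

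Next I would restart, taking $x_{K_1}$ as the initial guess of Algorithm~\ref{alg:M-OGM-G} and running its remaining $N/2 = K_2$ iterations. By Theorem~\ref{thm:M-OGM-G} (applied with iteration count $K_2$), and in particular by the remark following it which gives $\norm{\pf{x_{K_2}}}^2 = O\bigl(\frac{L \Delta_0}{K_2^2}\bigr)$ for the last iterate (since $\delta_{K_2+1} = 2$), we obtain
\[
\norm{\pf{x_N}}^2 = O\!\left(\frac{L \Delta_0}{K_2^2}\right) = O\!\left(\frac{L}{N^2}\cdot \frac{L R_0^2}{N^2}\right) = O\!\left(\frac{L^2 R_0^2}{N^4}\right),
\]
which is the claimed bound. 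The key point that makes this work is simply that the output quality of M-OGM-G scales \emph{linearly} in the input suboptimality gap $\Delta_0$, so feeding it the small gap produced by NAG compounds the two quadratic-in-$N$ rates into a quartic one.

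I do not expect any serious obstacle here, since both ingredient guarantees are already established (NAG classically, M-OGM-G in Theorem~\ref{thm:M-OGM-G}) and the argument is just a concatenation. The only minor points to be careful about are: (a) NAG's standard analysis bounds $f(x_{K_1}) - f(\xs)$, which is exactly the quantity feeding the IFC, so no conversion between optimality measures is needed; (b) the constant-factor loss from splitting $N$ into two halves of size $N/2$ is absorbed into the $O(\cdot)$; and (c) one should note that Algorithm~\ref{alg:M-OGM-G} only needs the value of its own iteration count (here $K_2$), not the value of $\Delta_0$, so the scheme is still implementable even though $\Delta_0$ is not known in advance — it only appears in the bound. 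If a cleaner constant is desired one can optimize the split $K_1 + K_2 = N$, but since the exponent of $N$ is the same for any fixed-fraction split, the balanced choice $K_1 = K_2 = N/2$ suffices for the stated $O(L^2 R_0^2 / N^4)$ rate.
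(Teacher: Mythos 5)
Your proof is correct and takes essentially the same route as the paper: run $N/2$ iterations of NAG to get $f(x_{N/2}) - f(\xs) = O(LR_0^2/N^2)$ under IDC, then feed $x_{N/2}$ into $N/2$ iterations of Algorithm~\ref{alg:M-OGM-G} and invoke the guarantee from Theorem~\ref{thm:M-OGM-G} (equivalently, the bound~\eqref{P20} the paper cites) to get $\norm{\pf{x_N}}^2 = O(L\Delta_0/N^2) = O(L^2R_0^2/N^4)$. Your additional remarks (implementability without knowing $\Delta_0$, constant absorption, split optimization) are accurate but not strictly needed.
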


\section{Accelerated SVRG: Fast Rates for Both Gradient Norm and Objective}
\label{sec:Acc-SVRG-G}
In this section, we focus on the IDC setting, that is, $\norm{x_0 - \xs} \leq R_0$ for some $\xs \in \Xs$. We use $K$ to denote the total number of stochastic iterations. From the development in Section \ref{sec:OGM-G}, it is natural to ask whether we can use the PEP approach to motivate new stochastic schemes. However, due to the exponential growth of the number of possible states $(i_0, i_1,\ldots)$, we cannot directly adopt this approach. A feasible alternative is to first fix an algorithmic framework and a family of potential functions, and then use the potential-based PEP approach in \citep{computer-aided}. However, this approach is much more restrictive. For example, it cannot identify special constructions like \eqref{T1} in OGM-G. Fortunately, as we will see, we can get some inspiration from the recent development of deterministic methods. Proofs in this section are given in Appendix~\ref{app:proof-acc-svrg-g}.

Our proposed scheme is given in Algorithm \ref{alg:Acc-SVRG-G}. We adopt the elegant loopless design of SVRG in \citep{kovalev2020don}. Note that the full gradient $\pf{\tilde{x}_k}$ is computed and stored only when $\tilde{x}_{k+1} = y_k$ at Step \ref{alg-step:prob-step}.  We summarize our main technical novelty as follows.

\begin{algorithm}[t]
	\caption{Acc-SVRG-G: Accelerated SVRG for Gradient minimization}
	\label{alg:Acc-SVRG-G}
	\renewcommand{\algorithmicrequire}{\textbf{Input:}}
	\renewcommand{\algorithmicensure}{\textbf{Initialize:}}
	\begin{algorithmic}[1]
		\Require parameters $\{\tau_k\}$, $\{p_k\}$, initial guess $x_0 \in \R^d$, total iteration number $K$.
		\Ensure vectors $z_0 = \tilde{x}_0 = x_0$  and scalars $\alpha_k = \frac{L\tau_k}{1 - \tau_k}, \forall k$ and $\widetilde{\tau} = \sum_{k=0}^{K-1}{\tau_k^{-2}}$.
		\For{$k=0, \ldots, K-1$}
		\State $y_{k} = \tau_k z_k + \left(1 - \tau_k\right) \left(\tilde{x}_k - \frac{1}{L}\pf{\tilde{x}_k}\right).$
		\State $z_{k+1} = \arg\min_{x} \left\{\innr{\mathcal{G}_k, x} + (\alpha_k / 2) \norm{x - z_k}^2\right\}$.\label{alg-step:mirror_c}
		\State \slash\slash\,$\mathcal{G}_k \triangleq \pfik{y_k} - \pfik{\tilde{x}_k} + \pf{\tilde{x}_k},$
		where $i_k$ is sampled uniformly in $[n]$.
		\State\label{alg-step:prob-step} $\tilde{x}_{k+1} = \begin{cases}
			y_k &\text{with probability }p_k,\\
			\tilde{x}_k &\text{with probability }1 - p_k. 
		\end{cases}$
		\EndFor
		\renewcommand{\algorithmicensure}{\textbf{Output (for gradient):}}
		\Ensure $x_{\text{out}}$ is sampled from $\left\{ \text{Prob}\{x_{\text{out}} = \tilde{x}_k\} = \frac{\tau_k^{-2}}{\widetilde{\tau}} \cond k\in \{0, \ldots, K-1\} \right\}$.
		\renewcommand{\algorithmicensure}{\textbf{Output (for function value):}}
		\Ensure $\tilde{x}_K$.
	\end{algorithmic}
\end{algorithm}

\paragraph{Main algorithmic novelty.} The design of stochastic accelerated methods is largely inspired by NAG. To make it clear, by setting $n=1$, we see that Katyusha \citep{allen2017katyusha}, MiG \citep{MiG}, SSNM \citep{SSNM}, Varag~\citep{VARAG}, VRADA \citep{VRADA}, ANITA \citep{Anita}, the acceleration framework in \citep{acc-vr} and AC-SA \citep{lan2012optimal,ghadimi2012optimal,pmlr-v124-zhou20a} all reduce to one of the following variants of NAG \citep{AT-NAG,zhu:linear}. We say that these methods are under the NAG framework.
\[
\begin{aligned}
&\begin{cases}
	x_k= \tau_k z_k + (1 - \tau_k) y_k,\\
	z_{k+1} = z_k - \alpha_k \pf{x_k},\\
	y_{k+1} = \tau_k z_{k+1} + (1 - \tau_k) y_k.\\	
\end{cases} &&&&&&\begin{cases}
x_k= \tau_k z_k + (1 - \tau_k) y_k,\\
z_{k+1} = z_k - \alpha_k \pf{x_k},\\
y_{k+1} = x_k - \eta_k \pf{x_k}.\\	
\end{cases}\\
&\ \ \ \ \text{\citet{AT-NAG}} &&&&&&\ \ \ \ \ \ \text{Linear Coupling \citep{zhu:linear}} 
\end{aligned}
\]
See \citep{u_AGD, NAGs} for other variants of NAG. When $n=1$, Algorithm \ref{alg:Acc-SVRG-G} reduces to the following scheme.
\[
\begin{aligned}
	&\begin{cases}
	y_{k} = \tau_k z_k + \left(1 - \tau_k\right) \left(y_{k-1} - \frac{1}{L}\pf{y_{k-1}}\right),\\
	z_{k+1} = z_k -  \frac{1}{\alpha_k} \pf{y_k}.
	\end{cases}\\
	&\ \ \ \ \text{Optimized Gradient Method (OGM) \citep{PEP,OGM}}
\end{aligned}
\]
Algorithm \ref{alg:Acc-SVRG-G} reduces to the scheme of OGM when $n=1$ (this point is clearer in the formulation of ITEM in \citep{ITEM}). Note that although we use OGM as the inspiration, the original OGM has nothing to do with making the gradient small and there is no hint on how a stochastic variant can be designed. OGM has a constant-time faster worst-case rate than NAG, which exactly matches the lower complexity bound in \citep{Drori17}. In the following proposition, we show that the OGM framework helps us conduct a tight one-iteration analysis, which gives room for achieving our goal.

\begin{propositions}\label{prop:Acc-SVRG-final}
	In Algorithm \ref{alg:Acc-SVRG-G}, the following holds at any iteration $k\geq0$ and $\forall \xs \in \Xs$.
	\begin{equation}\label{P10}
	\begin{aligned}
		&\left(\frac{1-\tau_k}{\tau_k^2p_k}\E{f(\tilde{x}_{k+1}) - f(\xs)} + \frac{L}{2} \E{\norm{z_{k+1} - \xs}^2}\right) + \frac{(1 -\tau_k)^2}{2L\tau_k^2}\E{\norm{\pf{\tilde{x}_k}}^2} \\ \leq{}& \left(\frac{(1-\tau_kp_k)(1-\tau_k)}{\tau_k^2p_k}\E{f(\tilde{x}_k) - f(\xs)} + \frac{L}{2} \E{\norm{z_k - \xs}^2}\right). 
	\end{aligned}
	\end{equation}
\end{propositions}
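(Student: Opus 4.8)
The plan is a one-step potential analysis in the style of OGM / linear coupling, treating the SVRG estimator $\Gx{k}$ as a stochastic surrogate for $\pf{y_k}$ in conditional expectation. The right-hand side of \eqref{P10} is, up to constants, a weighted function gap plus $\tfrac{L}{2}\norm{z_k - \xs}^2$; the extra $\tfrac{(1-\tau_k)^2}{2L\tau_k^2}\norm{\pf{\tilde x_k}}^2$ on the left is the ``dividend'' of the tighter OGM-style accounting --- relying on the interpolation condition \eqref{interpolation_c} rather than plain convexity --- beyond what a standard NAG-type argument yields.

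First I would apply the prox/mirror lemma to Step~\ref{alg-step:mirror_c}: since $z_{k+1}$ minimizes $\innr{\Gx{k}, x} + \tfrac{\alpha_k}{2}\norm{x - z_k}^2$, we have $z_{k+1} = z_k - \tfrac{1}{\alpha_k}\Gx{k}$, and the three-point identity gives, for every $\xs \in \Xs$,
\[
\innr{\Gx{k}, z_k - \xs} = \frac{1}{2\alpha_k}\norm{\Gx{k}}^2 + \frac{\alpha_k}{2}\norm{z_k - \xs}^2 - \frac{\alpha_k}{2}\norm{z_{k+1} - \xs}^2.
\]
Taking $\Eik{\cdot}$ and using unbiasedness $\Eik{\Gx{k}} = \pf{y_k}$ turns the left side into $\innr{\pf{y_k}, z_k - \xs}$, which I would lower-bound via the affine identity $z_k - \xs = \tfrac{1-\tau_k}{\tau_k}(y_k - \tilde y_k) + (y_k - \xs)$ --- read off from Step~2 of Algorithm~\ref{alg:Acc-SVRG-G}, with $\tilde y_k \triangleq \tilde x_k - \tfrac{1}{L}\pf{\tilde x_k}$ --- combined with \eqref{interpolation_c} at $(\xs, y_k)$, which gives $\innr{\pf{y_k}, y_k - \xs} \ge f(y_k) - f(\xs) + \tfrac{1}{2L}\norm{\pf{y_k}}^2$, and convexity of $f$ at $y_k$ evaluated at $\tilde y_k$, which gives $\innr{\pf{y_k}, y_k - \tilde y_k} \ge f(y_k) - f(\tilde y_k)$. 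The $\tilde y_k$-gap is then converted to $f(\tilde x_k) - f(\xs) - \tfrac{1}{2L}\norm{\pf{\tilde x_k}}^2$ via the descent lemma $f(\tilde y_k) \le f(\tilde x_k) - \tfrac{1}{2L}\norm{\pf{\tilde x_k}}^2$ --- one of the sources of the $\norm{\pf{\tilde x_k}}^2$ term in \eqref{P10}.

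The delicate part is handling $\Eik{\norm{\Gx{k}}^2}$. Using $\Eik{\Gx{k}} = \pf{y_k}$ one expands $\Eik{\norm{\Gx{k}}^2} = \Eik{\norm{\pfik{y_k} - \pfik{\tilde x_k}}^2} + 2\innr{\pf{y_k}, \pf{\tilde x_k}} - \norm{\pf{\tilde x_k}}^2$, and bounds the first term by $2L\big(f(\tilde x_k) - f(y_k) - \innr{\pf{y_k}, \tilde x_k - y_k}\big)$ using \eqref{interpolation_c} applied to each $f_i$ and averaged; the choice $\alpha_k = L\tau_k/(1-\tau_k)$ makes the prefactor $\tfrac{1}{2\alpha_k}$ absorb the $2L$ cleanly, so the function-gap and inner-product pieces it generates are precisely the ones that pair off against the terms produced above. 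Finally, Step~\ref{alg-step:prob-step} yields $\Eik{f(\tilde x_{k+1})} = p_k f(y_k) + (1-p_k) f(\tilde x_k)$, so after scaling the assembled inequality by the appropriate factor and substituting this identity, the $f(y_k) - f(\xs)$ term is replaced by $\tfrac{1-\tau_k}{\tau_k^2 p_k}\E{f(\tilde x_{k+1}) - f(\xs)}$ on the left and leaves coefficient $\tfrac{(1-\tau_k p_k)(1-\tau_k)}{\tau_k^2 p_k}$ on $f(\tilde x_k) - f(\xs)$ on the right; taking total expectation then gives \eqref{P10}. I expect the main obstacle to be exactly this bookkeeping: arranging that the $-\tfrac{\alpha_k}{2}\norm{z_{k+1} - z_k}^2$ absorbed in the prox step, the expansion of $\Eik{\norm{\Gx{k}}^2}$, the several uses of \eqref{interpolation_c}, the descent lemma, and the coupling $\alpha_k = L\tau_k/(1-\tau_k)$ all conspire so that every stray inner-product term cancels and the surviving $\norm{\pf{\tilde x_k}}^2$ and $\norm{z_{\bullet} - \xs}^2$ coefficients come out to exactly $\tfrac{(1-\tau_k)^2}{2L\tau_k^2}$ and $\tfrac{L}{2}$.
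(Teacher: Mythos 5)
Your overall blueprint --- the prox/mirror identity at Step~\ref{alg-step:mirror_c}, unbiasedness of $\Gx{k}$, the affine decomposition of $z_k - \xs$, the variance bound on $\Eik{\norm{\Gx{k}}^2}$ via the interpolation condition at $(\tilde x_k, y_k)$, the coupling $\alpha_k = L\tau_k/(1-\tau_k)$, and averaging over the Bernoulli step --- matches the paper's proof. But the step where you apply plain convexity to $\innr{\pf{y_k}, y_k - \tilde y_k}$ and then the descent lemma on $f(\tilde y_k)$ destroys exactly the cancellation that makes the argument tight.

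Once $\alpha_k = L\tau_k/(1-\tau_k)$ is substituted, the variance bound on $\Eik{\norm{\Gx{k}}^2}$ contributes the cross terms
$-\tfrac{1-\tau_k}{\tau_k}\innr{\pf{y_k}, \tilde x_k - y_k} + \tfrac{1-\tau_k}{L\tau_k}\innr{\pf{y_k}, \pf{\tilde x_k}} = \tfrac{1-\tau_k}{\tau_k}\innr{\pf{y_k}, y_k - \tilde y_k}$.
These have to cancel against \emph{the same inner product} appearing in the affine decomposition of $\innr{\pf{y_k}, z_k - \xs}$. By replacing that inner product with the smaller quantity $f(y_k) - f(\tilde y_k)$ (convexity) and then $f(y_k) - f(\tilde x_k) + \tfrac{1}{2L}\norm{\pf{\tilde x_k}}^2$ (descent lemma) on the lower-bound side, while the full inner product remains on the upper-bound side, you leave a nonnegative Bregman residual $\tfrac{1-\tau_k}{\tau_k}\big(\innr{\pf{y_k}, y_k - \tilde y_k} - f(y_k) + f(\tilde y_k)\big)$ dangling on the wrong side with nothing to absorb it. The inequality you end up with is therefore strictly weaker than the paper's intermediate bound and does not imply \eqref{P10}. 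The remedy is to keep the decomposition exact,
$\innr{\pf{y_k}, z_k - \xs} = \innr{\pf{y_k}, y_k - \xs} - \tfrac{1-\tau_k}{\tau_k}\innr{\pf{y_k}, \tilde x_k - y_k} + \tfrac{1-\tau_k}{L\tau_k}\innr{\pf{y_k}, \pf{\tilde x_k}}$,
apply \eqref{interpolation_c} only to the first inner product, and let the remaining two cancel identically with the variance bound --- that exact cancellation is the sole purpose of choosing $\alpha_k = L\tau_k/(1-\tau_k)$. Note also that the $\norm{\pf{\tilde x_k}}^2$ term in \eqref{P10} then comes entirely from the $-\norm{\pf{\tilde x_k}}^2$ produced in the expansion $\Eik{\norm{\Gx{k}}^2} = \Eik{\norm{\pfik{y_k} - \pfik{\tilde x_k}}^2} + 2\innr{\pf{y_k}, \pf{\tilde x_k}} - \norm{\pf{\tilde x_k}}^2$; no descent lemma on $f(\tilde y_k)$ is used anywhere.
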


The terms inside the parentheses form the commonly used potential function of SVRG variants. The additional $\mathbb{E}{[\norm{\pf{\tilde{x}_k}}^2]}$ term is created by adopting the OGM framework. In other words, we use the following potential function for Algorithm \ref{alg:Acc-SVRG-G} ($a_k,b_k, c_k \geq 0$):
\[
	T_k = a_k\E{f(\tilde{x}_k) - f(\xs)} + b_k \E{\norm{z_k - \xs}^2} + \sum_{i = 0}^{k-1}{c_i\E{\norm{\pf{\tilde{x}_i}}^2}}.
\] 
We first provide a simple parameter choice, which leads to a simple and clean analysis.
\begin{theorems}[Single-stage parameter choice]\label{thm:Acc-SVRG-G-single-stage}
	In Algorithm \ref{alg:Acc-SVRG-G}, if we choose $p_k \equiv \frac{1}{n}, \tau_k = \frac{3}{k/n + 6}$, the following holds at the outputs.
	\begin{equation}\label{P21}
	\begin{gathered}
		\E{\norm{\pf{x_{\textup{out}}}}^2} 
		= O\left(\frac{n^3L\big(f(x_0) - f(\xs)\big) + n^2L^2 R_0^2}{K^3}\right),\\
		\E{f(\tilde{x}_K)} - f(\xs) = O\left(\frac{n^2\big(f(x_0) - f(\xs)\big) + nLR_0^2}{K^2}\right).
	\end{gathered}
	\end{equation}
	In other words, to guarantee $\E{\norm{\pf{x_{\textup{out}}}}}\leq \epsilon_g$ and $\E{f(\tilde{x}_K)} - f(\xs) \leq \epsilon_f$, the oracle complexities are $O\Big(\frac{n(L(f(x_0) - f(\xs)))^{1/3}}{\epsilon_g^{2/3}} + \frac{(nLR_0)^{2/3}}{\epsilon_g^{2/3}}\Big)$ and 
	$O\Big(n\sqrt{\frac{f(x_0) - f(\xs)}{\epsilon_f}} + \frac{\sqrt{nL}R_0}{\sqrt{\epsilon_f}}\Big)$, respectively.
\end{theorems}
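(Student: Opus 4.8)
The plan is to convert the one-iteration estimate \eqref{P10} of Proposition~\ref{prop:Acc-SVRG-final} into a telescoping argument for the potential function
\[
	T_k = a_k\,\E{f(\tilde{x}_k) - f(\xs)} + \frac{L}{2}\,\E{\norm{z_k - \xs}^2} + \sum_{i=0}^{k-1}{c_i\,\E{\norm{\pf{\tilde{x}_i}}^2}},
\]
with the coefficients dictated by \eqref{P10}: set $c_k = \frac{(1-\tau_k)^2}{2L\tau_k^2}$, $a_k = \frac{1-\tau_{k-1}}{\tau_{k-1}^2 p_{k-1}}$ for $k\geq1$, and $a_0 = \frac{(1-\tau_0 p_0)(1-\tau_0)}{\tau_0^2 p_0}$. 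With this choice, \eqref{P10} at iteration $k$ reads exactly as $T_{k+1}\leq T_k$, \emph{provided} the coefficient multiplying $\E{f(\tilde{x}_k)-f(\xs)}$ on the right side of \eqref{P10} does not exceed $a_k$, i.e. $\frac{(1-\tau_k p_k)(1-\tau_k)}{\tau_k^2 p_k}\leq\frac{1-\tau_{k-1}}{\tau_{k-1}^2 p_{k-1}}$. First I would verify this for the prescribed $p_k\equiv\frac1n$, $\tau_k=\frac{3}{k/n+6}$: plugging in and clearing denominators, the inequality collapses to $k/n\geq-1/n$, which always holds. Note that the factor $(1-\tau_k p_k)$ is essential here, since $\tau\mapsto\frac{1-\tau}{\tau^2}$ is decreasing on $(0,1)$, so dropping that factor would reverse the inequality.

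Second, telescoping $T_{k+1}\leq T_k$ from $k=0$ to $K-1$ gives $T_K\leq T_0$. Since $z_0=\tilde{x}_0=x_0$, the sum in $T_0$ is empty, so $T_0=a_0(f(x_0)-f(\xs))+\frac{L}{2}\norm{x_0-\xs}^2$; with $\tau_0=\frac12$ one computes $a_0=2n-1=O(n)$, hence (picking $\xs\in\Xs$ realizing $R_0$) $T_0=O\big(n(f(x_0)-f(\xs))+LR_0^2\big)$. Dropping the nonnegative terms in $T_K$ and estimating $a_K=\frac{n(1-\tau_{K-1})}{\tau_{K-1}^2}=\frac{n}{9}\big(\tfrac{K-1}{n}+3\big)\big(\tfrac{K-1}{n}+6\big)=\Theta(K^2/n)$ yields $\E{f(\tilde{x}_K)-f(\xs)}\leq T_0/a_K=O\big(\frac{n^2(f(x_0)-f(\xs))+nLR_0^2}{K^2}\big)$, the second rate in \eqref{P21}.

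Third, for the gradient norm I would retain only the sum in $T_K\leq T_0$, i.e. $\sum_{k=0}^{K-1}c_k\E{\norm{\pf{\tilde{x}_k}}^2}\leq T_0$. Since $\tau_k\leq\tau_0=\frac12$ we have $(1-\tau_k)^2\geq\frac14$, so $c_k\geq\frac{1}{8L}\tau_k^{-2}$ and thus $\sum_{k=0}^{K-1}\tau_k^{-2}\E{\norm{\pf{\tilde{x}_k}}^2}\leq 8LT_0$. Because the output distribution puts mass $\tau_k^{-2}/\widetilde{\tau}$ on $\tilde{x}_k$, the left side equals $\widetilde{\tau}\,\E{\norm{\pf{x_{\textup{out}}}}^2}$, giving $\E{\norm{\pf{x_{\textup{out}}}}^2}\leq 8LT_0/\widetilde{\tau}$. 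Estimating $\widetilde{\tau}=\sum_{k=0}^{K-1}\tau_k^{-2}=\frac19\sum_{k=0}^{K-1}(k/n+6)^2=\Theta(K^3/n^2)$ yields $\E{\norm{\pf{x_{\textup{out}}}}^2}=O\big(\frac{n^3L(f(x_0)-f(\xs))+n^2L^2R_0^2}{K^3}\big)$, the first rate in \eqref{P21}. Finally I would read off oracle complexities: each iteration costs two stochastic gradients plus, with probability $p_k=\frac1n$, one full gradient, so the expected total is $O(K)$; inverting the two rates for $K$ (using $\E{\norm{\pf{x_{\textup{out}}}}}\leq\sqrt{\E{\norm{\pf{x_{\textup{out}}}}^2}}$ by Jensen for the gradient target) produces the stated bounds.

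Given Proposition~\ref{prop:Acc-SVRG-final}, I do not expect a genuine obstacle: the argument is a telescoping plus two asymptotic sum estimates. The only delicate point is the coefficient-consistency check in the first step — it is tight, relies on keeping the $(1-\tau_k p_k)$ factor, and is precisely what forces the particular shift ``$+6$'' and slope ``$1/n$'' in $\tau_k$ so that one gets a \emph{clean} (not merely asymptotic) telescoping.
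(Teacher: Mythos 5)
Your proposal is correct and follows essentially the same route as the paper: the paper sums the one-iteration inequality \eqref{P10} over $k=0,\ldots,K-1$ after verifying the coefficient-consistency condition $\frac{(1-\tau_{k+1}p_{k+1})(1-\tau_{k+1})}{\tau_{k+1}^2p_{k+1}} \leq \frac{1-\tau_k}{\tau_k^2p_k}$, which is exactly your potential-function telescoping $T_{k+1}\leq T_k$ phrased slightly differently. Your arithmetic (the inequality reducing to $k\geq-1$ under your indexing, $a_0=2n-1$, $a_K=\Theta(K^2/n)$, $\widetilde\tau=\Theta(K^3/n^2)$, and the $O(K)$ expected per-iteration cost) all match the paper's computations.
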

From \eqref{P21}, we see that Algorithm \ref{alg:Acc-SVRG-G} achieves fast $O(\frac{1}{K^{1.5}})$ and $O(\frac{1}{K^2})$ rates for minimizing the gradient norm and function value at the same time. However, despite being a simple choice, the oracle complexities are not better than the deterministic methods in Table \ref{gradient_table}. Below we provide a two-stage parameter choice, which is inspired by the idea of including a ``warm-up phase'' in \citep{SVRGpp,VARAG,VRADA,Anita}.

\begin{theorems}[Two-stage parameter choice]\label{thm:Acc-SVRG-G-multiple-stage}
	In Algorithm \ref{alg:Acc-SVRG-G}, let $p_k = \max\{\frac{6}{k+8}, \frac{1}{n}\}, \tau_k = \frac{3}{p_k(k+8)}$. The oracle complexities needed to guarantee $\E{\norm{\pf{x_{\textup{out}}}}}\leq \epsilon_g$ and $\E{f(\tilde{x}_K)} - f(\xs) \leq \epsilon_f$ are 
	\[
		O\left(n\min{\left\{\log{\frac{LR_0}{\epsilon_g}}, \log{n}\right\}}+ \frac{(nLR_0)^{2/3}}{\epsilon_g^{2/3}}\right) \text{ and }\, O\left(n\min{\left\{\log{\frac{LR_0^2}{\epsilon_f}}, \log{n}\right\}}+ \frac{\sqrt{nL}R_0}{\sqrt{\epsilon_f}}\right),
	\]
	respectively.
\end{theorems}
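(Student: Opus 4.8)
\emph{Proof strategy.} The plan is to telescope the one-iteration estimate \eqref{P10} of Proposition~\ref{prop:Acc-SVRG-final} along the two-stage schedule, extract rates for $\E{\norm{\pf{x_{\textup{out}}}}^2}$ and $\E{f(\tilde{x}_K)-f(\xs)}$ in terms of the iteration count $K$, and then convert $K$ into an oracle count by charging each iteration its expected cost of $2+np_k$ stochastic gradients. Writing $a_k=\frac{(1-\tau_kp_k)(1-\tau_k)}{\tau_k^2p_k}$ and $c_k=\frac{(1-\tau_k)^2}{2L\tau_k^2}$, inequality \eqref{P10} chains into
\[
a_K\E{f(\tilde{x}_K)-f(\xs)}+\frac{L}{2}\E{\norm{z_K-\xs}^2}+\sum_{k=0}^{K-1}c_k\E{\norm{\pf{\tilde{x}_k}}^2}\;\le\;a_0\big(f(x_0)-f(\xs)\big)+\frac{L}{2}R_0^2,
\]
provided the compatibility inequality $a_{k+1}\le\frac{1-\tau_k}{\tau_k^2p_k}$ holds for every $k$. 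Verifying this inequality is the combinatorial core of the argument.

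First I would unpack the schedule. For $0\le k\le 6n-8$ (Stage~1) one has $p_k=\frac{6}{k+8}$, hence $\tau_k\equiv\frac12$, $a_k=\frac{2}{p_k}-1=\frac{k+5}{3}$, and $\frac{1-\tau_k}{\tau_k^2p_k}=\frac{2}{p_k}=\frac{k+8}{3}$, so compatibility inside Stage~1 is just $\frac{k+6}{3}\le\frac{k+8}{3}$. For $k\ge 6n-7$ (Stage~2) one has $p_k=\frac1n$ and $\tau_k=\frac{3n}{k+8}\le\frac12$, so $\tau_k\le\frac12$ for all $k$ (in particular $\frac14\le(1-\tau_k)^2\le1$); here $a_k=\frac{n(1-\tau_k/n)(1-\tau_k)}{\tau_k^2}$, and after clearing denominators compatibility reduces to $k+10\ge 6n$, which holds on Stage~2, while the junction case $k=6n-8$ follows by a direct substitution (with $O(1/n)$ slack). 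I would also record that $a_0=\frac53=O(1)$ (since $p_0=\frac34$ for $n\ge2$) and that $f(x_0)-f(\xs)\le\frac{L}{2}R_0^2$ by $L$-smoothness, so the telescoped right-hand side is $O(LR_0^2)$.

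Next I would read off the two guarantees. Because $\tau_k\le\frac12$, we have $c_k\ge\frac{1}{8L}\tau_k^{-2}$, hence $\sum_{k=0}^{K-1}\tau_k^{-2}\E{\norm{\pf{\tilde{x}_k}}^2}=O(L^2R_0^2)$; dividing by $\widetilde{\tau}=\sum_{k=0}^{K-1}\tau_k^{-2}$ and using the sampling law of $x_{\textup{out}}$ gives $\E{\norm{\pf{x_{\textup{out}}}}^2}=O(L^2R_0^2/\widetilde{\tau})$, while keeping only the nonnegative function term on the left gives $\E{f(\tilde{x}_K)-f(\xs)}=O(LR_0^2/a_K)$. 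Evaluating the sums: in Stage~1, $\tau_k^{-2}=4$ and $a_k=\Theta(k)$, so $\widetilde{\tau}=\Theta(K)$ and $a_K=\Theta(K)$; once $K\gtrsim n$ so Stage~2 is reached, $\tau_k^{-2}=\frac{(k+8)^2}{9n^2}$ and $a_k=\Theta(k^2/n)$, so $\widetilde{\tau}=\Theta(K^3/n^2)$ and $a_K=\Theta(K^2/n)$. This gives $\E{\norm{\pf{x_{\textup{out}}}}^2}=O(L^2R_0^2/K)$ and $\E{f(\tilde{x}_K)-f(\xs)}=O(LR_0^2/K)$ while still in Stage~1, and $\E{\norm{\pf{x_{\textup{out}}}}^2}=O(n^2L^2R_0^2/K^3)$ and $\E{f(\tilde{x}_K)-f(\xs)}=O(nLR_0^2/K^2)$ once Stage~2 is entered --- exactly the behaviour that motivates this two-stage choice.

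Finally I would convert iteration counts to oracle counts. Since each iteration uses two stochastic gradients plus a full gradient with probability $p_k$, Stage~1 through iteration $K$ costs $O(K)+n\sum_{k<K}\frac{6}{k+8}=O(K+n\log K)$, whereas each Stage~2 iteration costs only $O(1)$. I would then split into two cases: if the target is reached inside Stage~1, then $K=O(LR_0^2/\epsilon_f)$ (resp.\ $K=O(L^2R_0^2/\epsilon_g^2)$) with $K=O(n)$, so $\log K=O(\min\{\log\frac{LR_0^2}{\epsilon_f},\log n\})$ (resp.\ $O(\min\{\log\frac{LR_0}{\epsilon_g},\log n\})$) and the cost is $O(n\log K)$; if Stage~2 must be entered, the Stage~1 prefix costs $O(n\log n)$ and Stage~2 adds $O(K)$ with $K=O(\sqrt{nL}R_0/\sqrt{\epsilon_f})$ (resp.\ $K=O((nLR_0)^{2/3}/\epsilon_g^{2/3})$), and in this regime $\min\{\log\frac{1}{\epsilon},\log n\}=\Theta(\log n)$, so the prefix is absorbed. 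Adding the two cases gives the claimed complexities. The main obstacle is twofold: verifying the compatibility inequality $a_{k+1}\le\frac{1-\tau_k}{\tau_k^2p_k}$ uniformly --- especially across the stage boundary, where both $\tau_k$ and $p_k$ change regime --- and the bookkeeping of the ``warm-up'' cost $\sum np_k$ together with the two-case split, which is precisely where the $\min\{\log\frac{1}{\epsilon},\log n\}$ factor is born; the telescoping itself is routine once Proposition~\ref{prop:Acc-SVRG-final} is in hand.
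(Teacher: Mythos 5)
Your proposal is correct and follows essentially the same route as the paper: verify the coefficient-compatibility inequality $a_{k+1}\le\frac{1-\tau_k}{\tau_k^2p_k}$ in the two regimes and at the junction, telescope Proposition~\ref{prop:Acc-SVRG-final}, use $\tau_k\le\frac12$ to read off $\E{\norm{\pf{x_{\textup{out}}}}^2}=O(L^2R_0^2/\sum_k\tau_k^{-2})$ and $\E{f(\tilde x_K)-f(\xs)}=O(\tau_{K-1}^2p_{K-1}LR_0^2)$, evaluate the sums per stage, and finally charge $np_k+2$ oracle calls per iteration and split by whether the target accuracy is reached in Stage~1 or Stage~2, which is exactly where the $\min\{\log(1/\epsilon),\log n\}$ factor comes from. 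The only cosmetic difference is that you supply the explicit algebra for the compatibility check (reducing it to $k+10\ge 6n$ in Stage~2) where the paper merely states "it can be verified"; your derivation matches.
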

Since $\norm{\pf{\tilde{x}_K}}^2 = O\big(L\big(f(\tilde{x}_K) - f(\xs)\big)\big)$, the last iterate has the complexity $O(n\log{\frac{1}{\epsilon}}+ \frac{\sqrt{n}}{\epsilon})$ for minimizing the gradient norm. Then, by outputting the $\tilde{x}$ that attains the minimum gradient, we can combine the results of outputting $x_\text{out}$ and $\tilde{x}_K$, which leads to the complexity $O(n\log{\frac{1}{\epsilon}}+ \min{\{ \frac{n^{2/3}}{\epsilon^{2/3}}, \frac{\sqrt{n}}{\epsilon}\}})$ in Table~\ref{gradient_table}. This complexity has a slightly worse dependence on $n$ than Katyusha + L2S. It is due to the adoption of $n$-dependent step size in L2S. As studied in \citep{li20a}, despite having a better complexity, $n$-dependent step size boosts numerical performance only when $n$ is \textit{extremely large}. If the practically fast $n$-independent step size is used for L2S, Katyusha+L2S and Acc-SVRG-G have similar complexities. See also Appendix \ref{app:exp}.

If $\epsilon$ is large or $n$ is very large, the recently proposed ANITA \citep{Anita} achieves an $O(n)$ complexity, which matches the lower complexity bound $\Omega(n)$ in this case \citep{woodworthS16}. Since ANITA uses the NAG framework, we show that similar results can be derived under the OGM framework in the following theorem:

\begin{theorems}[Low accuracy parameter choice]
	\label{thm:Acc-SVRG-G-first-epoch}
	In Algorithm \ref{alg:Acc-SVRG-G}, let iteration $N$ be the first time Step~\ref{alg-step:prob-step} updates $\tilde{x}_{k+1} = y_k$. If we choose $p_k \equiv \frac{1}{n}$, $\tau_k \equiv 1 - \frac{1}{\sqrt{n+1}}$ and terminate Algorithm \ref{alg:Acc-SVRG-G} at iteration $N$, then the following holds at $\tilde{x}_{N+1}:$
	\[
		\E{\norm{\pf{\tilde{x}_{N+1}}}^2}\leq \frac{8L^2R_0^2}{5(\sqrt{n+1}+1)}\text{ and }\, \E{f(\tilde{x}_{N+1})} - f(\xs) \leq \frac{LR_0^2}{\sqrt{n+1}+1},
	\]
	In particular, if the required accuracies are low (or $n$ is very large), i.e., $\epsilon_g^2 \geq \frac{8L^2R_0^2}{5(\sqrt{n+1}+1)}$ and $\epsilon_f \geq \frac{LR_0^2}{\sqrt{n+1}+1}$, then Algorithm \ref{alg:Acc-SVRG-G} only has an $O(n)$ oracle complexity.
\end{theorems}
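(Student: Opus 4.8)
The plan is to reduce the analysis to a ``frozen‑snapshot'' recursion and then average over the geometric law of the stopping time $N$. First I would record the structural facts: since $N$ is the first index at which Step~\ref{alg-step:prob-step} chooses $\tilde{x}_{k+1}=y_k$, we have $\tilde{x}_0=\tilde{x}_1=\cdots=\tilde{x}_N=x_0$, hence $\pf{\tilde{x}_k}=\pf{x_0}$ and $f(\tilde{x}_k)-f(\xs)=\Delta_0:=f(x_0)-f(\xs)$ for every $k\le N$, while the returned iterate is $\tilde{x}_{N+1}=y_N$. The Bernoulli coins driving Step~\ref{alg-step:prob-step} are independent of $\{i_k\}$ and therefore of $\{z_k,y_k\}$, so up to index $N$ the sequences $\{z_k\},\{y_k\}$ coincide in distribution with those produced by the recursion in which the snapshot is pinned at $x_0$ forever (call these $\{\bar{z}_m\},\{\bar{y}_m\}$), and $N$ is geometric with parameter $p=1/n$. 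Consequently, for $g\in\{\,\norm{\pf{\cdot}}^2,\ f(\cdot)-f(\xs)\,\}$, $\E{g(\tilde{x}_{N+1})}=\sum_{m\ge0}p(1-p)^m\,\E{g(\bar{y}_m)}$.

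Next I would specialize Proposition~\ref{prop:Acc-SVRG-final} to this frozen recursion. Expanding $\E{f(\tilde{x}_{k+1})-f(\xs)}=p\,\E{f(y_k)-f(\xs)}+(1-p)\,\E{f(\tilde{x}_k)-f(\xs)}$ in \eqref{P10}, moving the $f(\tilde{x}_k)$ contribution to the right‑hand side, and setting $\tau_k\equiv\tau=1-1/\sqrt{n+1}$, $p_k\equiv p$, $\tilde{x}_k\equiv x_0$, a short computation turns \eqref{P10} into
\[
\frac{1-\tau}{\tau^2}\,\E{f(\bar{y}_m)-f(\xs)}+\frac{L}{2}\E{\norm{\bar{z}_{m+1}-\xs}^2}+\frac{(1-\tau)^2}{2L\tau^2}\norm{\pf{x_0}}^2\ \le\ \frac{(1-\tau)^2}{\tau^2}\,\Delta_0+\frac{L}{2}\E{\norm{\bar{z}_m-\xs}^2}.
\]
Dividing by $\tfrac{1-\tau}{\tau^2}$, writing $Z_m:=\tfrac{\tau^2}{1-\tau}\cdot\tfrac{L}{2}\E{\norm{\bar{z}_m-\xs}^2}\ge0$ (so $Z_0\le\tfrac{\tau^2}{1-\tau}\cdot\tfrac{L}{2}R_0^2$ by the IDC), multiplying by $p(1-p)^m$, summing over $m\ge0$, and using $\sum_m p(1-p)^m=1$ together with $\sum_m p(1-p)^m(Z_m-Z_{m+1})=pZ_0-p^2\sum_{m\ge1}(1-p)^{m-1}Z_m\le pZ_0$, I obtain
\[
\E{f(\tilde{x}_{N+1})-f(\xs)}\ \le\ (1-\tau)\Delta_0-\frac{1-\tau}{2L}\norm{\pf{x_0}}^2+\frac{\tau^2 p}{1-\tau}\cdot\frac{LR_0^2}{2}.
\]

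From this inequality both claims follow. For the function value I would bound $\Delta_0\le\tfrac{L}{2}R_0^2$ ($L$‑smoothness, $\pf{\xs}=0$), drop the nonpositive term $-\tfrac{1-\tau}{2L}\norm{\pf{x_0}}^2$, and use the elementary identity $(1-\tau)+\tfrac{\tau^2 p}{1-\tau}=\tfrac{2}{\sqrt{n+1}+1}$ to get $\E{f(\tilde{x}_{N+1})}-f(\xs)\le\tfrac{LR_0^2}{\sqrt{n+1}+1}$. For the gradient norm I would instead keep the negative term: \eqref{interpolation_c} at $(\tilde{x}_{N+1},\xs)$ gives $\E{\norm{\pf{\tilde{x}_{N+1}}}^2}\le2L\,\E{f(\tilde{x}_{N+1})-f(\xs)}$, while \eqref{interpolation_c} at $(\xs,x_0)$ with Cauchy--Schwarz and the IDC gives $\Delta_0+\tfrac{1}{2L}\norm{\pf{x_0}}^2\le\innr{\pf{x_0},x_0-\xs}\le\norm{\pf{x_0}}R_0$, whence $2L\Delta_0-\norm{\pf{x_0}}^2\le2\norm{\pf{x_0}}\!\left(LR_0-\norm{\pf{x_0}}\right)\le\tfrac{1}{2}L^2R_0^2$. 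Substituting the second display into the first, collapsing $(1-\tau)\big(2L\Delta_0-\norm{\pf{x_0}}^2\big)\le\tfrac{1}{2}(1-\tau)L^2R_0^2$, plugging in $1-\tau=1/\sqrt{n+1}$, $p=1/n$, and using $\tfrac{3\sqrt{n+1}-1}{2\sqrt{n+1}}\le\tfrac{8}{5}$, yields $\E{\norm{\pf{\tilde{x}_{N+1}}}^2}\le\tfrac{8L^2R_0^2}{5(\sqrt{n+1}+1)}$. The complexity statement is then immediate: reaching iteration $N$ costs $\E{N}+1=n$ stochastic‑gradient steps, plus the initial full gradient $\pf{x_0}$ and the final $\pf{\tilde{x}_{N+1}}$, so $O(n)$ oracle calls; and under the stated thresholds on $\epsilon_g,\epsilon_f$ the two displayed bounds (with $\E{\norm{\pf{\tilde{x}_{N+1}}}}\le\big(\E{\norm{\pf{\tilde{x}_{N+1}}}^2}\big)^{1/2}$) certify the required accuracy.

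The step I expect to be the main obstacle is the passage to the frozen one‑iteration bound in the second paragraph: one has to verify that the argument behind \eqref{P10} is genuinely pointwise in $(z_k,\tilde{x}_k)$ — so that it remains valid verbatim when $\tilde{x}_k$ is held at $x_0$ and the Step~\ref{alg-step:prob-step} expectation is formally expanded — and in particular that \eqref{P10} conceals no coin‑dependent cross term that would spoil this substitution. The remaining issues are routine: the stopping‑time bookkeeping (independence of the coins from $\{\bar{z}_m,\bar{y}_m\}$, and summability of the series, which holds because $\E{\norm{\bar{z}_m-\xs}^2}$ grows at most linearly in $m$ against the geometric weight $(1-p)^m$), together with the arithmetic that lands exactly on the constants $\tfrac{8}{5}$ and $\tfrac{1}{\sqrt{n+1}+1}$.
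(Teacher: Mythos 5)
Your proof is correct, and it takes a genuinely different route for the gradient-norm bound than the paper. The paper works from the pre-expectation inequality \eqref{P18} (which, unlike the stated \eqref{P10}, retains the $-\tfrac{\tau}{2L}\norm{\pf{y_k}}^2$ term), evaluates it at the stopping index $N$ so that $\norm{\pf{y_N}}^2=\norm{\pf{\tilde{x}_{N+1}}}^2$, uses the identity $\E{\norm{z_N - \xs}^2} - \E{\norm{z_{N+1} - \xs}^2} = p\big(\norm{z_0-\xs}^2 - \E{\norm{z_{N+1}-\xs}^2}\big)$ to absorb the telescope, and arrives at the combined potential bound $\E{f(\tilde{x}_{N+1})} - f(\xs) + \tfrac{1}{8L}\E{\norm{\pf{\tilde{x}_{N+1}}}^2} \le \tfrac{LR_0^2}{\sqrt{n+1}+1}$; the factor $\tfrac{8}{5}$ then falls out by additionally using $f(\tilde{x}_{N+1}) - f(\xs) \ge \tfrac{1}{2L}\norm{\pf{\tilde{x}_{N+1}}}^2$ on the left. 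You instead work from the Proposition as stated (no $\pf{y_k}$ term available), sum the frozen recursion directly against the geometric law, and recover the constant by a tighter use of the initial data: $\E{\norm{\pf{\tilde{x}_{N+1}}}^2} \le 2L\,\E{f(\tilde{x}_{N+1})-f(\xs)}$ together with the AM--GM estimate $2L\Delta_0 - \norm{\pf{x_0}}^2 \le 2\norm{\pf{x_0}}(LR_0 - \norm{\pf{x_0}}) \le \tfrac{1}{2}L^2R_0^2$. The two stopping-time manipulations are equivalent (Abel summation against $(1-p)^m p$ vs.\ the paper's displayed identity for $\E{\norm{z_{N+1}-\xs}^2}$); both give $O(n)$ complexity with the same accounting. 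What the paper's route buys is a reusable potential-style inequality that bundles function value and gradient norm at the output; what yours buys is that it does not need to peel back the Proposition's proof, and in fact your constant is strictly sharper, namely $\E{\norm{\pf{\tilde{x}_{N+1}}}^2}\le \tfrac{(3\sqrt{n+1}-1)L^2R_0^2}{2\sqrt{n+1}(\sqrt{n+1}+1)}\le \tfrac{3L^2R_0^2}{2(\sqrt{n+1}+1)}$, of which the stated $\tfrac{8}{5}$ is a relaxation. One word of caution worth keeping in mind: \eqref{P10} as stated is a total-expectation inequality for the unfrozen chain, so one must go back to the per-iteration (conditional) inequality underlying its proof to justify the frozen-snapshot substitution; you flag this yourself and the verification is routine, since the derivation is conditional on $(\tilde{x}_k,z_k)$.
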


In the low accuracy region (specified above), the choice in Theorem \ref{thm:Acc-SVRG-G-first-epoch} removes the $O(\log{\frac{1}{\epsilon}})$ factor in the complexity of Theorem \ref{thm:Acc-SVRG-G-multiple-stage}. From the above two theorems, we see that Algorithm \ref{alg:Acc-SVRG-G} achieves a similar rate for minimizing the function value as ANITA \citep{Anita}, which is the current best rate. We include some numerical justifications of Algorithm \ref{alg:Acc-SVRG-G} in Appendix~\ref{app:exp}. We believe that the potential-based PEP approach in \citep{computer-aided} can help us identify better parameter choices of Algorithm \ref{alg:Acc-SVRG-G}, which we leave for future work.

\section{Near-Optimal Accelerated SVRG with Adaptive Regularization}
\label{sec:R-Acc-SVRG-G}

\begin{algorithm}[t]
	\caption{R-Acc-SVRG-G}
	\label{alg:R-Acc-SVRG-G}
	\renewcommand{\algorithmicrequire}{\textbf{Input:}}
	
	\begin{algorithmic}[1]
		\Require accuracy $\epsilon>0$, parameters $\delta_0 = L, \beta > 1$, initial guess $x_0 \in \R^d$.
		\For{$t=0,1,2, \ldots$}
		
		\State\label{alg-step:regularization} Define $f^{\delta_t}(x) = (1/n)\sum_{i=1}^n{f_i^{\delta_t}(x)}$, where $f^{\delta_t}_i(x) = f_i(x) + (\delta_t/2) \norm{x - x_0}^2$.
		\State Initialize vectors $z_0 = \tilde{x}_0 = x_0$ and set $\tau_x, \tau_z, \alpha, p, C_{\textup{IDC}}, C_{\textup{IFC}}$ according to Proposition \ref{prop:R-params}.
		\For{$k=0,1,2,\ldots$} \label{alg-step:inner-loop}
		\State $y_{k} = \tau_x z_k + \left(1 - \tau_x\right) \tilde{x}_k +  \tau_{z}\left(\delta_t(\tilde{x}_k - z_k) - \pfd{\tilde{x}_k}\right)$.
		\State $z_{k+1} = \arg\min_{x} \Big\{\innr{\mathcal{G}^{\delta_t}_k, x} + (\alpha / 2) \norm{x - z_k}^2 + (\delta_t/2) \norm{x - y_k}^2 \Big\}$.
		\State \slash\slash\,$\mathcal{G}^{\delta_t}_k \triangleq \pfdik{y_k} - \pfdik{\tilde{x}_k} + \pfd{\tilde{x}_k},$
		where $i_k$ is sampled uniformly in $[n]$.
		\State $\tilde{x}_{k+1} = \begin{cases}
			y_k &\text{with probability }p,\\
			\tilde{x}_k &\text{with probability }1 - p. 
		\end{cases}$
		\If {\footnotemark$\norm{\pf{\tilde{x}_k}}\leq \epsilon$} output $\tilde{x}_k$ and terminate the algorithm. \label{alg-step:terminate}
		\EndIf
		\If {under IDC and $ (1 + \frac{\delta_t}{\alpha})^k \geq \sqrt{C_\textup{IDC}} / \delta_t$} break the inner loop. \label{alg-step:IDC-break}
		\EndIf
		\If {under IFC and $ (1 + \frac{\delta_t}{\alpha})^k \geq \sqrt{C_\textup{IFC} / 2\delta_t}$} break the inner loop. \label{alg-step:IFC-break}
		\EndIf
		\EndFor \label{alg-step:inner-loop-end}
		\State $\delta_{t+1} = \delta_t / \beta$.
		\EndFor
	\end{algorithmic}
\end{algorithm}
\footnotetext{Note that we maintain the full gradient $\pfd{\tilde{x}_k}$ and $\pf{\tilde{x}_k} = \pfd{\tilde{x}_k} - \delta_t(\tilde{x}_k - x_0)$.}

Currently, there is no known stochastic method that directly achieves the optimal rate in~$\epsilon$. To get near-optimal rates, the existing strategy is to use a carefully designed regularization technique \citep{nesterov2012make,allen2018make} with a method that solves strongly convex problems; see, e.g., \citep{nesterov2012make,allen2018make,foster2019complexity, davis2018complexity}. However, the regularization parameter requires the knowledge of $R_0$ or $\Delta_0$, which significantly limits its practicality.

Inspired by the recently proposed adaptive regularization technique \citep{ito2021nearly}, we develop a near-optimal accelerated SVRG variant (Algorithm \ref{alg:R-Acc-SVRG-G}) that does not require the knowledge of $R_0$ or $\Delta_0$. Note that this technique was originally proposed for NAG under the IDC assumption. Our development extends this technique to the stochastic setting, which brings an $O(\sqrt{n})$ rate improvement compared with adaptive regularized NAG. Moreover, we consider both IFC and IDC settings.  Proofs in this section are in Appendix~\ref{app:proof-r-acc-svrg-g}.

\paragraph{Detailed design.} Algorithm \ref{alg:R-Acc-SVRG-G} has a ``guess-and-check'' framework. In the outer loop, we first define the regularized objective $f^{\delta_t}$ using the current estimate of regularization parameter $\delta_t$, and then we initialize an accelerated SVRG method (the inner loop) to solve the $\delta_t$-strongly convex $f^{\delta_t}$. If the inner loop breaks at Step \ref{alg-step:IDC-break} or \ref{alg-step:IFC-break}, indicating the poor quality\footnote{If Algorithm \ref{alg:R-Acc-SVRG-G} does not terminate before it breaks at Step \ref{alg-step:IDC-break} or \ref{alg-step:IFC-break} for the current estimate $\delta_t$, it is quite likely that running infinite number of inner iterations, the algorithm still will not terminate.} of the current estimate $\delta_t$, $\delta_t$ will be divided by a fixed $\beta$. Thus, conceptually, we can adopt any method that solves strongly convex finite-sums at the optimal rate as the inner loop. However, since the constructions of Step~\ref{alg-step:IDC-break} or \ref{alg-step:IFC-break} require some algorithm-dependent constants, we have to fix one method as the inner loop. 

The inner loop we adopted is a loopless variant of BS-SVRG \citep{zhou2020boosting}. This is because (i) BS-SVRG is the fastest known accelerated SVRG variant (for ill-conditioned problems) and (ii) it has a simple scheme, especially after using the loopless construction \citep{kovalev2020don}. However, its original guarantee is built upon $\{z_k\}$. Clearly, we cannot implement the stopping criterion (Step~\ref{alg-step:terminate}) on $\norm{\pf{z_k}}$. Interestingly, we discover that its sequence $\{\tilde{x}_k\}$ works perfectly in our regularization framework, even if we can neither establish convergence on $f(\tilde{x}_k) - f(\xs)$ nor on $\norm{\tilde{x}_k - \xs}^2$.\footnote{It is due to the special potential function of BS-SVRG (see \eqref{app:bs-svrg-potential}), which does not contain these two terms.} Moreover, we find that the loopless construction significantly simplifies the parameter constraints of BS-SVRG, which originally involves $\Theta(n)$th-order inequality. We provide the detailed parameter choice as follows:

\begin{propositions}[Parameter choice]
	\label{prop:R-params}
	In Algorithm \ref{alg:R-Acc-SVRG-G}, we set $\tau_x = \frac{\alpha + \delta_t}{\alpha + L+\delta_t}, \tau_z = \frac{\tau_x}{\delta_t} - \frac{\alpha(1 - \tau_x)}{\delta_t L}$ and $p = \frac{1}{n}$. We set $\alpha$ as the (unique) positive root of the cubic equation $\left(1 - \frac{p(\alpha + \delta_t)}{\alpha + L + \delta_t}\right)\left(1 + \frac{\delta_t}{\alpha}\right)^2 = 1$ and we specify $C_{\textup{IDC}} = L^2 + \frac{L\alpha^2p}{L + (1 -p)(\alpha + \delta_t)}, C_{\textup{IFC}} = 2L + \frac{2L\alpha^2p}{(L + (1 -p)(\alpha + \delta_t))\delta_t}$. Under these choices, we have $\frac{\alpha}{\delta_t} = O\big(n + \sqrt{n(L/\delta_t + 1)}\big), C_{\textup{IDC}} = O\big((L + \delta_t)^2\big)$ and $C_{\textup{IFC}} = O(L)$.
\end{propositions}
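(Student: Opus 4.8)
The plan is to reduce the whole statement to a single polynomial identity. Clearing denominators in $\bigl(1 - \tfrac{p(\alpha+\delta_t)}{\alpha+L+\delta_t}\bigr)\bigl(1+\tfrac{\delta_t}{\alpha}\bigr)^2 = 1$ — multiply by $\alpha^2(\alpha+L+\delta_t)$ and use $(\alpha+\delta_t)^2-\alpha^2 = \delta_t(2\alpha+\delta_t)$ — puts the defining equation, for $\alpha>0$, in the equivalent form
\begin{equation*}
\delta_t(2\alpha+\delta_t)(\alpha+L+\delta_t) = p(\alpha+\delta_t)^3. \tag{$\star$}
\end{equation*}
To see that $\alpha$ is well defined, I would look at $g(\alpha) := \bigl(1 - \tfrac{p(\alpha+\delta_t)}{\alpha+L+\delta_t}\bigr)\bigl(1+\tfrac{\delta_t}{\alpha}\bigr)^2$ on $(0,\infty)$: its first factor equals $1-p+\tfrac{pL}{\alpha+L+\delta_t}$, which is positive and strictly decreasing in $\alpha$, and its second factor is likewise positive and strictly decreasing; hence $g$ is strictly decreasing, with $g(0^+)=+\infty$ and $g(+\infty)=1-p<1$, so there is a unique $\alpha>0$ with $g(\alpha)=1$.

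The crux is to extract from $(\star)$ the identity
\begin{equation*}
L+(1-p)(\alpha+\delta_t) \;=\; (\alpha+L+\delta_t)-p(\alpha+\delta_t) \;=\; \frac{\alpha^2(\alpha+L+\delta_t)}{(\alpha+\delta_t)^2},
\end{equation*}
where the last equality substitutes $p(\alpha+\delta_t)=\tfrac{\delta_t(2\alpha+\delta_t)(\alpha+L+\delta_t)}{(\alpha+\delta_t)^2}$ from $(\star)$ and uses $(\alpha+\delta_t)^2-\delta_t(2\alpha+\delta_t)=\alpha^2$. Feeding this identity, together with $p(\alpha+\delta_t)^2=\tfrac{\delta_t(2\alpha+\delta_t)(\alpha+L+\delta_t)}{\alpha+\delta_t}$ (again from $(\star)$), into the definitions makes the fractions collapse: $\tfrac{L\alpha^2 p}{L+(1-p)(\alpha+\delta_t)} = \tfrac{Lp(\alpha+\delta_t)^2}{\alpha+L+\delta_t} = \tfrac{L\delta_t(2\alpha+\delta_t)}{\alpha+\delta_t} < 2L\delta_t$, so $C_{\textup{IDC}} < L^2+2L\delta_t < (L+\delta_t)^2$; and $\tfrac{2L\alpha^2 p}{\delta_t\,(L+(1-p)(\alpha+\delta_t))} = \tfrac{2L(2\alpha+\delta_t)}{\alpha+\delta_t} < 4L$, so $C_{\textup{IFC}} < 6L$. (Both final inequalities use $2\alpha+\delta_t<2(\alpha+\delta_t)$.) This gives $C_{\textup{IDC}}=O\bigl((L+\delta_t)^2\bigr)$ and $C_{\textup{IFC}}=O(L)$.

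For the bound on $\alpha/\delta_t$ I would estimate $(\star)$ crudely: with $2\alpha+\delta_t<2(\alpha+\delta_t)$ and $\alpha+L+\delta_t=(\alpha+\delta_t)+L$ we get $p(\alpha+\delta_t)^3<2\delta_t(\alpha+\delta_t)\bigl((\alpha+\delta_t)+L\bigr)$; dividing by $\alpha+\delta_t$ and writing $u:=\alpha+\delta_t$ gives $pu^2-2\delta_t u-2\delta_t L<0$, hence $u<\tfrac{\delta_t+\sqrt{\delta_t^2+2p\delta_t L}}{p}$. Using $\sqrt{a+b}\le\sqrt a+\sqrt b$ and $p=1/n$ this yields $\tfrac{\alpha}{\delta_t}<\tfrac{u}{\delta_t}<2n+\sqrt{2nL/\delta_t}=O\bigl(n+\sqrt{n(L/\delta_t+1)}\bigr)$.

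There is no genuine obstacle here; everything is elementary algebra. The step that matters is recognizing the identity $L+(1-p)(\alpha+\delta_t)=\tfrac{\alpha^2(\alpha+L+\delta_t)}{(\alpha+\delta_t)^2}$ — it is what keeps the bounds on $C_{\textup{IDC}}$ and $C_{\textup{IFC}}$ free of spurious factors of $n$ (a naive estimate that merely feeds the bound on $\alpha/\delta_t$ into the $C$-formulas is too lossy). The only practical risk is a sign or bookkeeping slip in deriving $(\star)$; note the whole argument is uniform in $n\ge1$ because it never divides by $1-p$.
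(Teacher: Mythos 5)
Your proof is correct and takes a genuinely different, cleaner route than the paper's. The paper normalizes by $\delta_t$, rewrites the cubic as an explicit polynomial $s(\alpha/\delta_t)$ in terms of $\kappa_t=(L+\delta_t)/\delta_t$, verifies sign conditions to show $\alpha/\delta_t \le 2n+2\sqrt{n\kappa_t}$, and then bounds the common fraction in $C_{\textup{IDC}},C_{\textup{IFC}}$ (which it reduces to $\tfrac{(\alpha/\delta_t+1)^2}{n(\alpha/\delta_t+\kappa_t)}$ via the cubic) by feeding in that upper bound and using monotonicity, arriving at the constant $6$. Your approach instead clears denominators into $(\star)$, derives the identity $L+(1-p)(\alpha+\delta_t)=\alpha^2(\alpha+L+\delta_t)/(\alpha+\delta_t)^2$ directly from $(\star)$, and collapses the fraction all the way to $\tfrac{2\alpha+\delta_t}{\alpha+\delta_t}<2$ — so the $C$-bounds (you get $C_{\textup{IDC}}<(L+\delta_t)^2$, $C_{\textup{IFC}}<6L$, tighter than the paper's $14L$) never depend on the bound for $\alpha/\delta_t$ at all, which is a pleasant decoupling. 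Your existence/uniqueness argument via monotonicity of $g$ is also more transparent than the paper's sign-checking of the cubic, and your quadratic estimate for $\alpha/\delta_t$ from $(\star)$ is a clean substitute for the paper's evaluation of $s(2n+2\sqrt{n\kappa_t})$. I checked the algebra throughout and it holds; the point you flag as the crux — using $(\star)$ to turn $L+(1-p)(\alpha+\delta_t)$ into $\alpha^2(\alpha+L+\delta_t)/(\alpha+\delta_t)^2$ — is indeed what makes the $C$-bounds come out constant rather than carrying spurious $n$- or $\kappa_t$-dependence.
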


Under the choices of $\tau_x$ and $\tau_z$, the $\alpha$ above is the optimal choice in our analysis. Then, we can characterize the progress of the inner loop in the following proposition:

\begin{propositions}[The inner loop of Algorithm \ref{alg:R-Acc-SVRG-G}]
	\label{prop:R-Acc-SVRG-innr-loop}
	Using the parameters specified in Proposition \ref{prop:R-params}, after running the inner loop (Step \ref{alg-step:inner-loop}-\ref{alg-step:inner-loop-end}) of Algorithm~\ref{alg:R-Acc-SVRG-G} for $k$ iterations, we can conclude that 
	\begin{enumerate}[label=(\roman*),leftmargin=1cm, nosep]
	\item under IDC, i.e., $\norm{x_0 - \xs} \leq R_0 \,\text{ for some } \xs \in \Xs$,
	\[
	\E{\norm{\pf{\tilde{x}_k}}} \leq{} \left(\delta_t + \left(1 +\frac{\delta_t}{\alpha}\right)^{-k} \sqrt{C_{\textup{IDC}}}\right) R_0,
	\]
	\item under IFC, i.e., $f(x_0) - f(\xs) \leq \Delta_0$,
	\[
	\E{\norm{\pf{\tilde{x}_k}}} \leq{} \left(\sqrt{2\delta_t} + \left(1 +\frac{\delta_t}{\alpha}\right)^{-k} \sqrt{C_{\textup{IFC}}}\right)\sqrt{\Delta_0}.
	\]
\end{enumerate}
\end{propositions}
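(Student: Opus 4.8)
Let $\xsd$ denote the unique minimizer of the $\delta_t$-strongly convex, $(L+\delta_t)$-smooth objective $f^{\delta_t}$, and introduce its shifted function $h^{\delta_t}(x) \triangleq f^{\delta_t}(x) - f^{\delta_t}(\xsd) - \frac{\delta_t}{2}\norm{x - \xsd}^2$, which is $L$-smooth and convex (so it obeys \eqref{interpolation_c}). The plan rests on the identity $\pfd{\xsd} = \mathbf{0}$, i.e. $\pf{\xsd} = -\delta_t(\xsd - x_0)$, from which
\[
\phd{x} = \pfd{x} - \delta_t(x - \xsd) = \pf{x} + \delta_t(\xsd - x_0) = \pf{x} - \pf{\xsd}.
\]
Hence $\norm{\pf{\tilde{x}_k}} \le \norm{\phd{\tilde{x}_k}} + \norm{\pf{\xsd}} = \norm{\phd{\tilde{x}_k}} + \delta_t\norm{\xsd - x_0}$, and since $\xsd$ is deterministic the proposition reduces to (a) a geometric bound on $\E{\norm{\phd{\tilde{x}_k}}}$ and (b) a bound on $\norm{\xsd - x_0}$.

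Part (b) is the standard Tikhonov estimate. By optimality of $\xsd$ and $f(\xsd)\ge f(\xs)$: under IDC, $f^{\delta_t}(\xsd)\le f^{\delta_t}(\xs)$ yields $\norm{\xsd - x_0}\le\norm{\xs - x_0}\le R_0$; under IFC, $f^{\delta_t}(\xsd)\le f^{\delta_t}(x_0)=f(x_0)$ yields $\frac{\delta_t}{2}\norm{\xsd - x_0}^2\le f(x_0)-f(\xs)\le\Delta_0$. Substituted into the bound above, these produce precisely the non-vanishing terms $\delta_t R_0$ and $\sqrt{2\delta_t\Delta_0}=\sqrt{2\delta_t}\sqrt{\Delta_0}$ in the statement.

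For part (a), I would first verify that the inner loop is algebraically the loopless variant of BS-SVRG \citep{zhou2020boosting} applied to $f^{\delta_t}$ (the updates of $y_k$, $z_{k+1}$, and the probabilistic refresh of $\tilde{x}_k$ all match), so its one-iteration analysis carries over. Combining the interpolation inequality \eqref{interpolation_c} for $h^{\delta_t}$, the optimality condition of the mirror step on $\{z_k\}$, and $\mathbb{E}_{i_k}$ over the variance-reduced estimator $\mathcal{G}^{\delta_t}_k$, that analysis produces the BS-SVRG potential $\mathcal{V}_k$ of \eqref{app:bs-svrg-potential} together with the contraction $\Eik{\mathcal{V}_{k+1}}\le(1+\delta_t/\alpha)^{-2}\mathcal{V}_k$, the factor being $1-p\tau_x$ — exactly what the cubic equation defining $\alpha$ in Proposition~\ref{prop:R-params} enforces. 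Iterating and taking total expectation gives $\E{\mathcal{V}_k}\le(1+\delta_t/\alpha)^{-2k}\mathcal{V}_0$. The structural point — emphasized in the text, and the reason the regularization scheme can be closed on $\{\tilde{x}_k\}$ — is that $\mathcal{V}_k$ controls $\norm{\phd{\tilde{x}_k}}^2$ (directly, or via an $h^{\delta_t}(\tilde{x}_k)$ term and co-coercivity) while containing neither $f^{\delta_t}(\tilde{x}_k)-f^{\delta_t}(\xsd)$ nor $\norm{\tilde{x}_k-\xsd}^2$; so $\E{\norm{\phd{\tilde{x}_k}}^2}\le(\text{const})(1+\delta_t/\alpha)^{-2k}\mathcal{V}_0$, and Jensen's inequality converts this into the bound on $\E{\norm{\phd{\tilde{x}_k}}}$. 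Finally, since $z_0=\tilde{x}_0=x_0$, the initial potential collapses: under IDC every term of $\mathcal{V}_0$ is a multiple of $\norm{x_0-\xsd}^2\le R_0^2$, the gradient term being bounded by $\norm{\phd{x_0}}^2\le L^2\norm{x_0-\xsd}^2$ (via $L$-smoothness of $h^{\delta_t}$ and $\phd{\xsd}=\mathbf{0}$), the source of the $L^2$ in $C_{\textup{IDC}}$; under IFC the function-gap terms are bounded by $f^{\delta_t}(x_0)-f^{\delta_t}(\xsd)\le\Delta_0$, the gradient term by $\norm{\phd{x_0}}^2\le 2L\big(f^{\delta_t}(x_0)-f^{\delta_t}(\xsd)\big)\le 2L\Delta_0$ via \eqref{interpolation_c} (the source of the $2L$ in $C_{\textup{IFC}}$), and $\norm{x_0-\xsd}^2\le 2\Delta_0/\delta_t$ (the source of the extra $1/\delta_t$). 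Matching constants reproduces exactly $C_{\textup{IDC}}$ and $C_{\textup{IFC}}$, and assembling (a) with (b) gives the two displayed bounds.

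I expect the main obstacle to be part (a): reproducing the loopless BS-SVRG one-iteration analysis so as to confirm both that the contraction rate is exactly $(1+\delta_t/\alpha)^{-2}$ under the parameters of Proposition~\ref{prop:R-params}, and that the resulting potential's only dependence on $\tilde{x}_k$ is through $\norm{\phd{\tilde{x}_k}}^2$ (equivalently $h^{\delta_t}(\tilde{x}_k)$). Everything else — the identity $\phd{\cdot}=\pf{\cdot}-\pf{\xsd}$, the triangle-inequality split, the Tikhonov bounds of (b), the Jensen step, and propagating constants into $C_{\textup{IDC}},C_{\textup{IFC}}$ — is routine bookkeeping.
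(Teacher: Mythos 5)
Your proposal is correct and follows essentially the same route as the paper's proof in Appendix~\ref{app:proof-r-acc-svrg-g}: the identity $\phd{\cdot}=\pf{\cdot}-\pf{\xsd}$ and the triangle-inequality split, the Tikhonov estimates from Lemma~\ref{lem:regular}~(iii)--(iv) for the non-decaying $\delta_t R_0$ and $\sqrt{2\delta_t\Delta_0}$ terms, co-coercivity of $h^{\delta_t}$ plus Jensen, and the $(1+\delta_t/\alpha)^{-2}$ contraction of the BS-SVRG potential~\eqref{app:bs-svrg-potential} with the cubic equation supplying $1-p\tau_x=(1+\delta_t/\alpha)^{-2}$. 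The only cosmetic discrepancy is in the $C_{\textup{IDC}}/C_{\textup{IFC}}$ bookkeeping: the potential carries $h^{\delta_t}(\tilde x_k)$ rather than a gradient term, so the $L^2$ and $2L$ factors enter only when $h^{\delta_t}(\tilde x_k)$ is converted to $\norm{\phd{\tilde x_k}}^2$ via $2L$-co-coercivity and $T_0$ is bounded through $h^{\delta_t}(x_0)\le f(x_0)-f(\xs)$ (Lemma~\ref{lem:regular}~(ii)) — your sketch reaches the same constants through a slightly reordered chain.
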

The above results motivate the construction of Step~\ref{alg-step:IDC-break} and \ref{alg-step:IFC-break}. For example, in the IDC setting, when the inner loop breaks at Step \ref{alg-step:IDC-break}, using \textit{(i)} above, we obtain $\E{\norm{\pf{\tilde{x}_k}}} \leq 2\delta_t R_0$. Then, by discussing the relative size of $\delta_t$ and a certain constant, we can estimate the complexity of Algorithm \ref{alg:R-Acc-SVRG-G}. The same methodology is used in the IFC setting.
\begin{theorems}[IDC case]\label{thm:R-Acc-SVRG-final-IDC}
	Denote $\delta^\star_\textup{IDC} = \frac{\epsilon q}{2R_0}$ for some $q\in (0,1)$ and let the outer iteration $t = \ell$ be the first time\footnote{We assume that $\epsilon$ is small such that $\max{\{\delta^\star_{\text{IDC}}, \delta^\star_{\text{IFC}}\}} \leq \delta_0 = L$ for simplicity. In this case, $\ell > 0$. } $\delta_\ell \leq \delta^\star_\textup{IDC}$. The following assertions hold.
	\begin{enumerate}[label=(\roman*),leftmargin=1cm, nosep]
		\item At outer iteration $\ell$, Algorithm \ref{alg:R-Acc-SVRG-G} terminates with probability at least $1 - q$.\footnote{If Algorithm \ref{alg:R-Acc-SVRG-G} does not terminate at outer iteration $\ell$, it terminates at the next outer iteration with probability at least $1 - q/\beta$. That is, it terminates with higher and higher probability. The same goes for the IFC case. }
		\item The total expected oracle complexity of the $\ell+1$ outer loops is 
		\[
		O\left(\left(n \log{\frac{L R_0}{\epsilon q}} + \sqrt{\frac{nLR_0}{\epsilon q}}\right)\log{\frac{LR_0}{\epsilon q} }\right).
		\]
	\end{enumerate}
\end{theorems}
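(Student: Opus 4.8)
The plan is to derive both assertions by combining the inner-loop estimate of Proposition~\ref{prop:R-Acc-SVRG-innr-loop}(i) with the parameter bounds of Proposition~\ref{prop:R-params} and some elementary bookkeeping. Two preliminary observations organize everything. First, since $\delta_t = \delta_0/\beta^t = L/\beta^t$ is deterministic, the outer index $\ell$ is a fixed quantity, and its defining property (first $t$ with $\delta_t \leq \delta^\star_{\textup{IDC}} = \epsilon q/(2R_0)$) gives $\beta^\ell = \Theta(LR_0/(\epsilon q))$ and $\ell = O(\log\frac{LR_0}{\epsilon q})$ when $\beta$ is treated as an absolute constant. Second, each outer iteration restarts the inner loop from the fixed point $x_0$ with $z_0 = \tilde{x}_0 = x_0$, so Proposition~\ref{prop:R-Acc-SVRG-innr-loop}(i) applies verbatim at outer iteration $\ell$ with $\delta_t = \delta_\ell$, independent of the random choices made in outer iterations $0,\ldots,\ell-1$.

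For assertion (i), I would consider the first iteration $k^\star$ at which the test in Step~\ref{alg-step:IDC-break} succeeds, i.e.\ the smallest $k$ with $(1+\delta_\ell/\alpha)^{k} \geq \sqrt{C_{\textup{IDC}}}/\delta_\ell$. Since Step~\ref{alg-step:terminate} is evaluated at iteration $k^\star$ before that break fires, it suffices to control $\E{\norm{\pf{\tilde{x}_{k^\star}}}}$: substituting $k = k^\star$ into Proposition~\ref{prop:R-Acc-SVRG-innr-loop}(i) and using $(1+\delta_\ell/\alpha)^{-k^\star}\sqrt{C_{\textup{IDC}}} \leq \delta_\ell$ gives $\E{\norm{\pf{\tilde{x}_{k^\star}}}} \leq 2\delta_\ell R_0 \leq 2\delta^\star_{\textup{IDC}} R_0 = \epsilon q$. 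Markov's inequality then yields $\prob{\norm{\pf{\tilde{x}_{k^\star}}} > \epsilon} \leq q$, so with probability at least $1-q$ the algorithm terminates no later than iteration $k^\star$ of outer loop $\ell$; the footnoted refinement for outer iteration $\ell+1$ is obtained in the same way with $\delta_{\ell+1} = \delta_\ell/\beta$ in place of $\delta_\ell$.

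For assertion (ii), I would bound the work done at outer iteration $t$ for each $t \leq \ell$. The inner loop runs at most until the break threshold $k^\star_t$; using $\alpha_t/\delta_t \geq 1$ (hence $\log(1+\delta_t/\alpha_t) \geq \delta_t/(2\alpha_t)$), $\sqrt{C_{\textup{IDC}}} = O(L+\delta_t) = O(L)$, and $L/\delta_t = \beta^t \leq \beta^\ell = O(LR_0/(\epsilon q))$ --- all from Proposition~\ref{prop:R-params} --- gives $k^\star_t = O\big(\tfrac{\alpha_t}{\delta_t}\log\tfrac{LR_0}{\epsilon q}\big)$. Each inner iteration costs $O(1)$ component-gradient evaluations for the variance-reduced estimate $\mathcal{G}^{\delta_t}_k$ (the anchor $\pfd{\tilde{x}_k}$ is maintained), plus, with probability $p=1/n$, a full-gradient recomputation of $n$ components; since $p\cdot n = O(1)$, the expected per-iteration cost is $O(1)$, and together with the $n$ components for the initial full gradient the expected cost of outer loop $t$ is $O\big(n + \tfrac{\alpha_t}{\delta_t}\log\tfrac{LR_0}{\epsilon q}\big)$ (early termination via Step~\ref{alg-step:terminate} only lowers this). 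Summing over $t=0,\ldots,\ell$ with $\tfrac{\alpha_t}{\delta_t} = O\big(n + \sqrt{n(L/\delta_t+1)}\big)$, the $n$-part contributes $O((\ell+1)n) = O\big(n\log\tfrac{LR_0}{\epsilon q}\big)$, and for the $\sqrt{n}$-part I would use $\sqrt{n(L/\delta_t+1)} \leq \sqrt{2n}\,\beta^{t/2}$ together with the geometric sum $\sum_{t=0}^{\ell}\beta^{t/2} = O(\beta^{\ell/2}) = O\big(\sqrt{LR_0/(\epsilon q)}\big)$ to get $\sum_{t=0}^{\ell}\tfrac{\alpha_t}{\delta_t} = O\big(n\log\tfrac{LR_0}{\epsilon q} + \sqrt{\tfrac{nLR_0}{\epsilon q}}\big)$. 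Multiplying by the $\log\tfrac{LR_0}{\epsilon q}$ factor carried by $k^\star_t$ and absorbing the lower-order $(\ell+1)n$ term produces the claimed complexity.

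The bookkeeping above is routine except for the final geometric summation, which is the crux of the argument: it is precisely the geometric schedule $\delta_{t+1} = \delta_t/\beta$ that makes the dominant $\sqrt{n/\delta_t}$-type costs sum --- ratio $\sqrt\beta > 1$, hence dominated by the last term --- to $O\big(\sqrt{nLR_0/(\epsilon q)}\big)$, the cost of a single run at the terminal regularization level, rather than being multiplied by the number $\ell+1$ of outer loops; this is what yields the near-optimal $\sqrt{n}/\sqrt{\epsilon}$ dependence. A secondary point to handle carefully is the amortization of the loopless full-gradient recomputations via $p\cdot n = O(1)$, together with the (trivial but necessary) remark that the inner loop can only stop earlier than its break threshold, so all the iteration counts above are legitimate upper bounds.
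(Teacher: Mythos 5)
Your proposal is correct and follows essentially the same route as the paper's proof: Markov's inequality applied to the bound $\E{\norm{\pf{\tilde{x}_k}}} \leq 2\delta_\ell R_0 \leq \epsilon q$ from Proposition~\ref{prop:R-Acc-SVRG-innr-loop}(i) for part (i), and the per-outer-loop bound $O\big((n+\sqrt{nL/\delta_t})\log\frac{L}{\delta_t}\big)$ summed geometrically over $t$ for part (ii). The only cosmetic differences are that you spell out the $\log(1+x)\geq x/2$ step and the amortization $pn=O(1)$ more explicitly than the paper does.
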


\begin{theorems}[IFC case]\label{thm:R-Acc-SVRG-final-IFC}
	Denote $\delta^\star_\textup{IFC} = \frac{\epsilon^2 q^2}{8\Delta_0}$ for some $q\in (0,1)$ and let the outer iteration $t=\ell$ be the first time $\delta_\ell \leq \delta^\star_\textup{IFC}$. The following assertions hold.
	\begin{enumerate}[label=(\roman*),leftmargin=1cm, nosep]
		\item At outer iteration $\ell$, Algorithm \ref{alg:R-Acc-SVRG-G} terminates with probability at least $1 - q$.
		\item The total expected oracle complexity of the $\ell+1$ outer loops is 
		\[
		O\left(\left(n \log{\frac{\sqrt{L\Delta_0}}{\epsilon q}} + \frac{\sqrt{nL\Delta_0}}{\epsilon q}\right)\log{\frac{\sqrt{L\Delta_0}}{\epsilon q}}\right).
		\]
	\end{enumerate}
\end{theorems}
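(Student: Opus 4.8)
The plan is to derive both assertions as corollaries of the inner-loop guarantee in Proposition~\ref{prop:R-Acc-SVRG-innr-loop}(ii), using Markov's inequality for the termination probability and a geometric-series estimate for the complexity. Two structural facts make this clean: (a) the sequence $\delta_t=L\beta^{-t}$ is deterministic, so $\ell$ (the first index with $\delta_\ell\le\delta^\star_{\textup{IFC}}$) is a fixed number; (b) each outer loop re-initializes $z_0=\tilde{x}_0=x_0$ and regularizes around the original $x_0$, so the inner loop at outer iteration $t$ is an independent run to which Proposition~\ref{prop:R-Acc-SVRG-innr-loop}(ii) applies verbatim with the \emph{same} constant $\Delta_0=f(x_0)-f(\xs)$, irrespective of what happened in outer loops $0,\dots,t-1$. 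Moreover, since the stopping test at Step~\ref{alg-step:terminate} is on the true gradient $\pf{\tilde{x}_k}=\pfd{\tilde{x}_k}-\delta_t(\tilde{x}_k-x_0)$, whenever the algorithm stops it returns a genuine $\epsilon$-stationary point of $f$, so the complexity bound in~(ii) is automatically an upper bound even if the algorithm stops before outer loop~$\ell$.

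For assertion~(i), consider outer loop $\ell$ and let $k^\star$ be the first inner iteration at which Step~\ref{alg-step:IFC-break} triggers, so that $(1+\delta_\ell/\alpha)^{k^\star}\ge\sqrt{C_{\textup{IFC}}/(2\delta_\ell)}$, equivalently $(1+\delta_\ell/\alpha)^{-k^\star}\le\sqrt{2\delta_\ell/C_{\textup{IFC}}}$, where $\alpha$ is the Proposition~\ref{prop:R-params} root for $\delta=\delta_\ell$. Substituting this into Proposition~\ref{prop:R-Acc-SVRG-innr-loop}(ii) bounds the bracket by $2\sqrt{2\delta_\ell}$, giving $\E{\norm{\pf{\tilde{x}_{k^\star}}}}\le 2\sqrt{2\delta_\ell}\,\sqrt{\Delta_0}$, and $\delta_\ell\le\delta^\star_{\textup{IFC}}=\epsilon^2q^2/(8\Delta_0)$ reduces this to $\E{\norm{\pf{\tilde{x}_{k^\star}}}}\le\epsilon q$. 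Markov's inequality then yields $\prob{\norm{\pf{\tilde{x}_{k^\star}}}>\epsilon}\le q$; on the complementary event the actual run (which tests $\norm{\pf{\tilde{x}_k}}\le\epsilon$ every iteration) has already terminated at some $k\le k^\star$, so Algorithm~\ref{alg:R-Acc-SVRG-G} stops during outer loop $\ell$ with probability at least $1-q$. The footnote's claim about subsequent outer loops is the same computation with $\delta_{\ell+1}=\delta_\ell/\beta$, which shrinks both $\E{\norm{\pf{\tilde{x}_{k^\star}}}}$ and the failure probability by a constant factor per further outer loop.

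For assertion~(ii), I will bound the expected cost of outer loop $t$ and sum. Taking logarithms in the Step~\ref{alg-step:IFC-break} condition and using $\log(1+x)\ge x/(1+x)$ shows the inner loop at outer iteration $t$ runs at most $O\big(\tfrac{\alpha_t}{\delta_t}\log\tfrac{L}{\delta_t}\big)$ iterations, where $\alpha_t$ is the Proposition~\ref{prop:R-params} root for $\delta=\delta_t$ and I used $C_{\textup{IFC}}=O(L)$. Each inner iteration costs $O(1)$ stochastic gradients plus, with probability $p=1/n$, a full gradient; adding the single initial full gradient and invoking $\alpha_t/\delta_t=O(n+\sqrt{nL/\delta_t})$, the expected cost of outer loop $t$ is $O\big((n+\sqrt{nL/\delta_t})\log\tfrac{L}{\delta_t}+n\big)$. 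Writing $\delta_t=L\beta^{-t}$, so that $\log(L/\delta_t)=\Theta(t)$ and $\sqrt{L/\delta_t}=\beta^{t/2}$, and summing over $t=0,\dots,\ell$, the first term contributes $\sum_t O(nt)=O(n\ell^2)$ and the second $\sum_t O(\sqrt{n}\,\beta^{t/2}t)=O(\sqrt{n}\,\ell\,\beta^{\ell/2})$ (the geometric sum being dominated by its last term since $\beta>1$). Minimality of $\ell$ gives $\delta_\ell=\Theta(\delta^\star_{\textup{IFC}})=\Theta(\epsilon^2q^2/\Delta_0)$, hence $\beta^{\ell/2}=\sqrt{L/\delta_\ell}=\Theta(\sqrt{L\Delta_0}/(\epsilon q))$ and $\ell=\Theta(\log(\sqrt{L\Delta_0}/(\epsilon q)))$; substituting recovers $O\big((n\log\tfrac{\sqrt{L\Delta_0}}{\epsilon q}+\tfrac{\sqrt{nL\Delta_0}}{\epsilon q})\log\tfrac{\sqrt{L\Delta_0}}{\epsilon q}\big)$.

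The step requiring the most care is the probabilistic bookkeeping in~(i): one must apply Markov's inequality to the iterate $\tilde{x}_{k^\star}$ of a hypothetical run that \emph{ignores} the early-termination test — for which Proposition~\ref{prop:R-Acc-SVRG-innr-loop}(ii) provides an unconditional bound — and then transfer the resulting probability estimate to the actual run by observing that on the event $\{\norm{\pf{\tilde{x}_{k^\star}}}\le\epsilon\}$ the actual run must have terminated by iteration $k^\star$. The remainder — the logarithmic inner-loop count, the geometric summation, and the $O(n)$ per-loop full-gradient overhead — is routine and mirrors the IDC analysis of Theorem~\ref{thm:R-Acc-SVRG-final-IDC}, with $R_0$ replaced by $\sqrt{\Delta_0}$, the break bound $2\delta_tR_0$ replaced by $2\sqrt{2\delta_t\Delta_0}$, and $\delta^\star_{\textup{IDC}}$ by $\delta^\star_{\textup{IFC}}$.
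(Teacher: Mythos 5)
Your proposal is correct and follows essentially the same route as the paper's proof: apply Proposition~\ref{prop:R-Acc-SVRG-innr-loop}(ii) at the first breaking inner iteration to bound $\E{\norm{\pf{\tilde{x}_k}}}$ by $\sqrt{8\delta_\ell\Delta_0}\leq\epsilon q$, invoke Markov for the termination probability, then bound the per-outer-loop expected gradient count via $(np+2)(\alpha/\delta_t+1)\log\sqrt{C_{\textup{IFC}}/(2\delta_t)}=O\big((n+\sqrt{nL/\delta_t})\log(L/\delta_t)\big)$ and sum the resulting geometric-in-$\sqrt{1/\delta_t}$ series. The only genuine refinement in your write-up is the explicit coupling argument in (i) — reasoning about a hypothetical run that ignores the Step~\ref{alg-step:terminate} test and then transferring the event $\{\norm{\pf{\tilde{x}_{k^\star}}}\le\epsilon\}$ to the actual run — which the paper leaves implicit; and your bookkeeping in (ii) ($\log(L/\delta_t)=\Theta(t)$, giving $O(n\ell^2)+O(\sqrt{n}\,\ell\,\beta^{\ell/2})$) is a cosmetically different but equivalent way of arriving at the same final bound as the paper's uniform factoring of $\log(L/\delta_\ell)$.
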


Compared with regularized Katyusha in Table \ref{gradient_table}, the adaptive regularization approach drops the need to estimate $R_0$ or $\Delta_0$ at the cost of a mere $\log{\frac{1}{\epsilon}}$ factor in the non-dominant term (if $\epsilon$ is small).

\section{Discussion}
\label{sec:discussion}
In this work, we proposed several simple and practical schemes that complement existing works (Table~\ref{gradient_table}). Admittedly, the new schemes are currently only limited to the unconstrained Euclidean setting, because our techniques heavily rely on the interpolation conditions \eqref{interpolation_c} and \eqref{interpolation_sc}. On the other hand, methods such as OGM \citep{OGM}, TM \citep{van2017fastest} and ITEM \citep{ITEM,d2021acceleration}, which also rely on these conditions, are still not known to have their proximal gradient variants. A concurrent work \citep{lee2021geometric} proposed proximal point variants of these algorithms. Extending their techniques to our schemes is left for future work. Another future work is to conduct extensive experiments to evaluate the proposed schemes. We list some other future directions as follows.

(1) It is not clear how to naturally connect the parameters of M-OGM-G (Algorithm \ref{alg:M-OGM-G}) to OGM-G (Algorithm \ref{alg:OGM-G}). The parameters of both algorithms seem to be quite restrictive and hardly generalizable due to the special construction at \eqref{T1}.

(2) Is this new ``momentum'' in OGM-G beneficial for training deep neural networks? Other classic momentum schemes such as NAG \citep{AGD1} or heavy-ball momentum method \citep{polyak1964some} are extremely effective for this task (see, e.g., \citep{pmlr-v28-sutskever13}), and they were also originally proposed for convex objectives.

(3) Can we directly accelerate SARAH (L2S)? It seems that existing acceleration techniques fail to accelerate SARAH (or result in poor dependence on $n$ as in \citep{acc-vr}). According to its position in Table \ref{gradient_table}, we suspect that there exists an accelerated variant of SARAH which reduces to OGM-G when $n = 1$.

\bibliography{MGS_finite_sums}
\bibliographystyle{abbrvnat}

\newpage
\appendix

\section{Numerical results of Acc-SVRG-G (Algorithm \ref{alg:Acc-SVRG-G})}
\label{app:exp}
\begin{figure}[H]
	\begin{center}
		\begin{subfigure}{0.4\linewidth}
			\includegraphics[width=\linewidth]{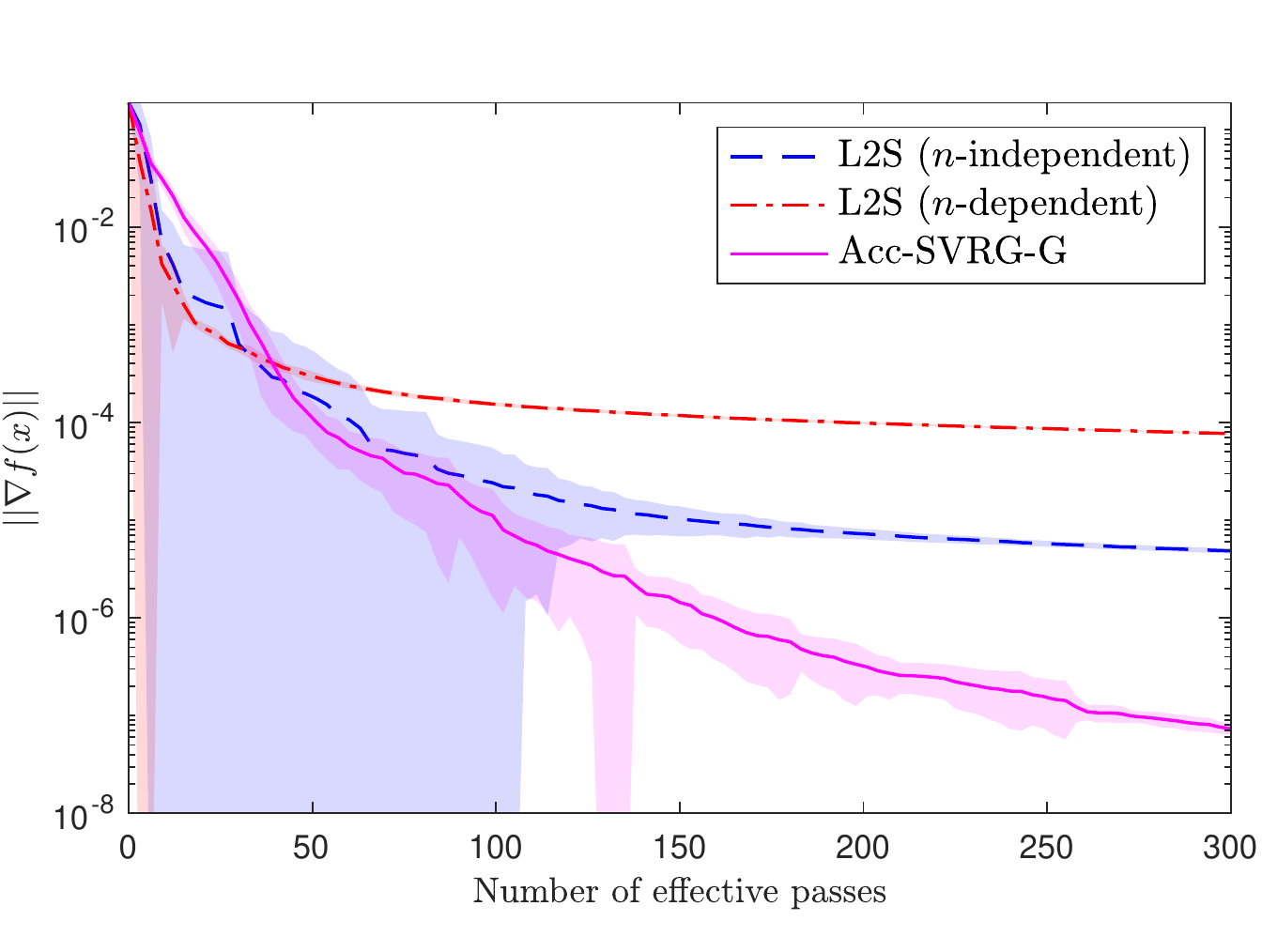}
			\caption{\textsf{a9a} dataset. Measuring the gradient norm.}
			\label{a9a_norm}
		\end{subfigure} \qquad\qquad
		\begin{subfigure}{0.4\linewidth}
			\includegraphics[width=\linewidth]{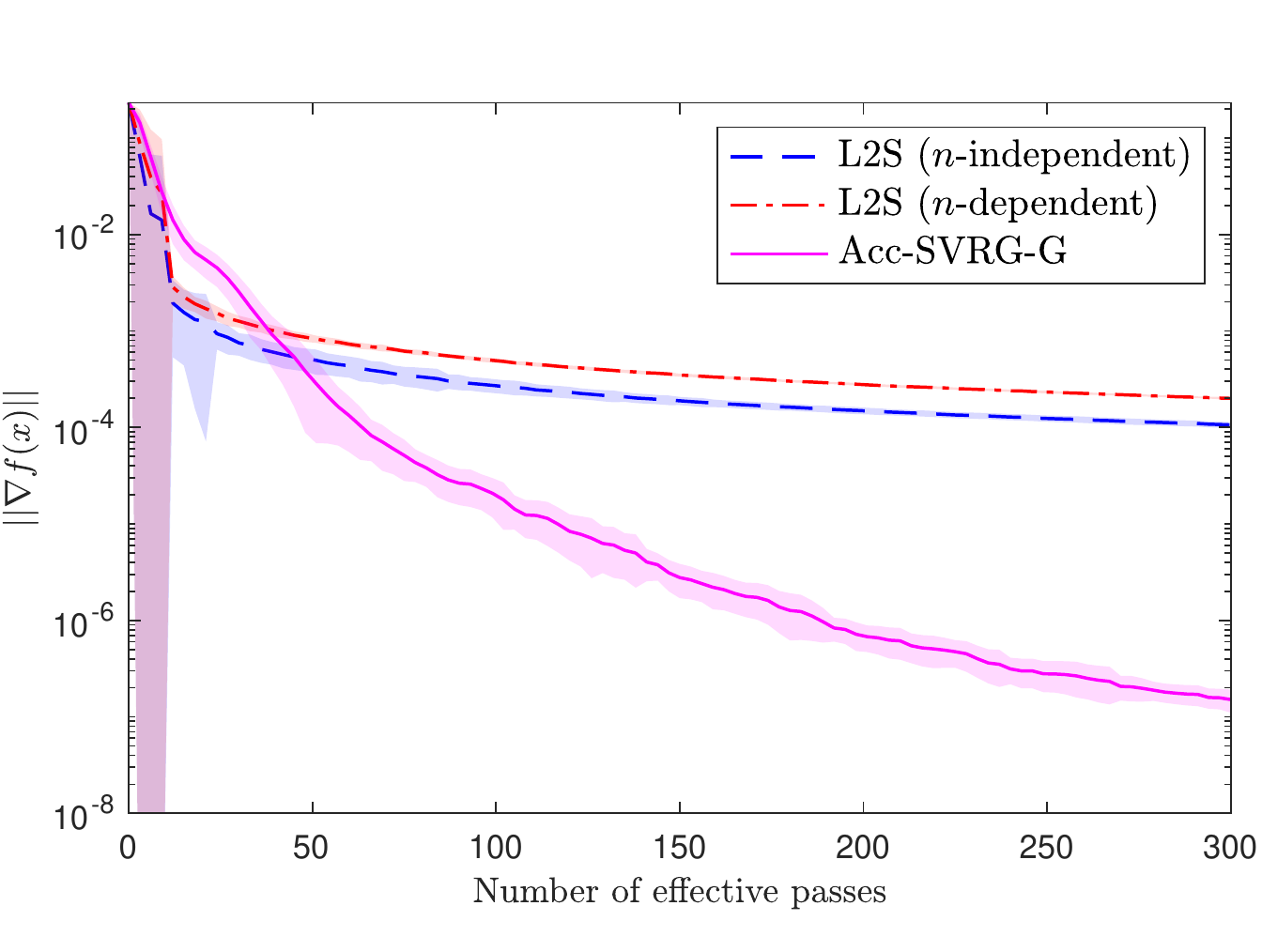}
			\caption{\textsf{w8a} dataset. Measuring the gradient norm.}
			\label{w8a_norm}
		\end{subfigure} \\
		\begin{subfigure}{0.4\linewidth}
			\includegraphics[width=\linewidth]{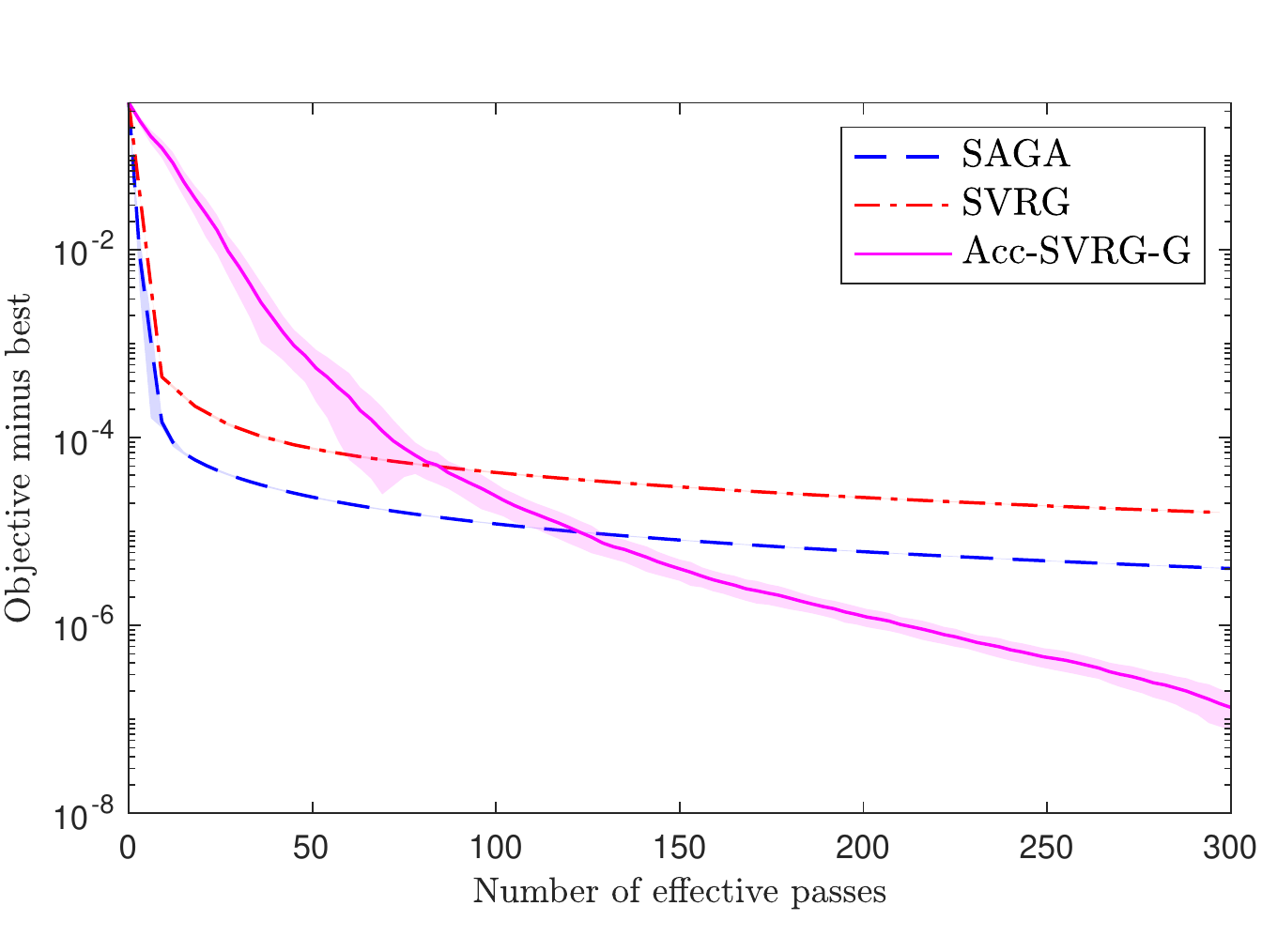}
			\caption{\textsf{a9a} dataset. Measuring the function value.}
			\label{a9a_function}
		\end{subfigure} \qquad\qquad
		\begin{subfigure}{0.4\linewidth}
			\includegraphics[width=\linewidth]{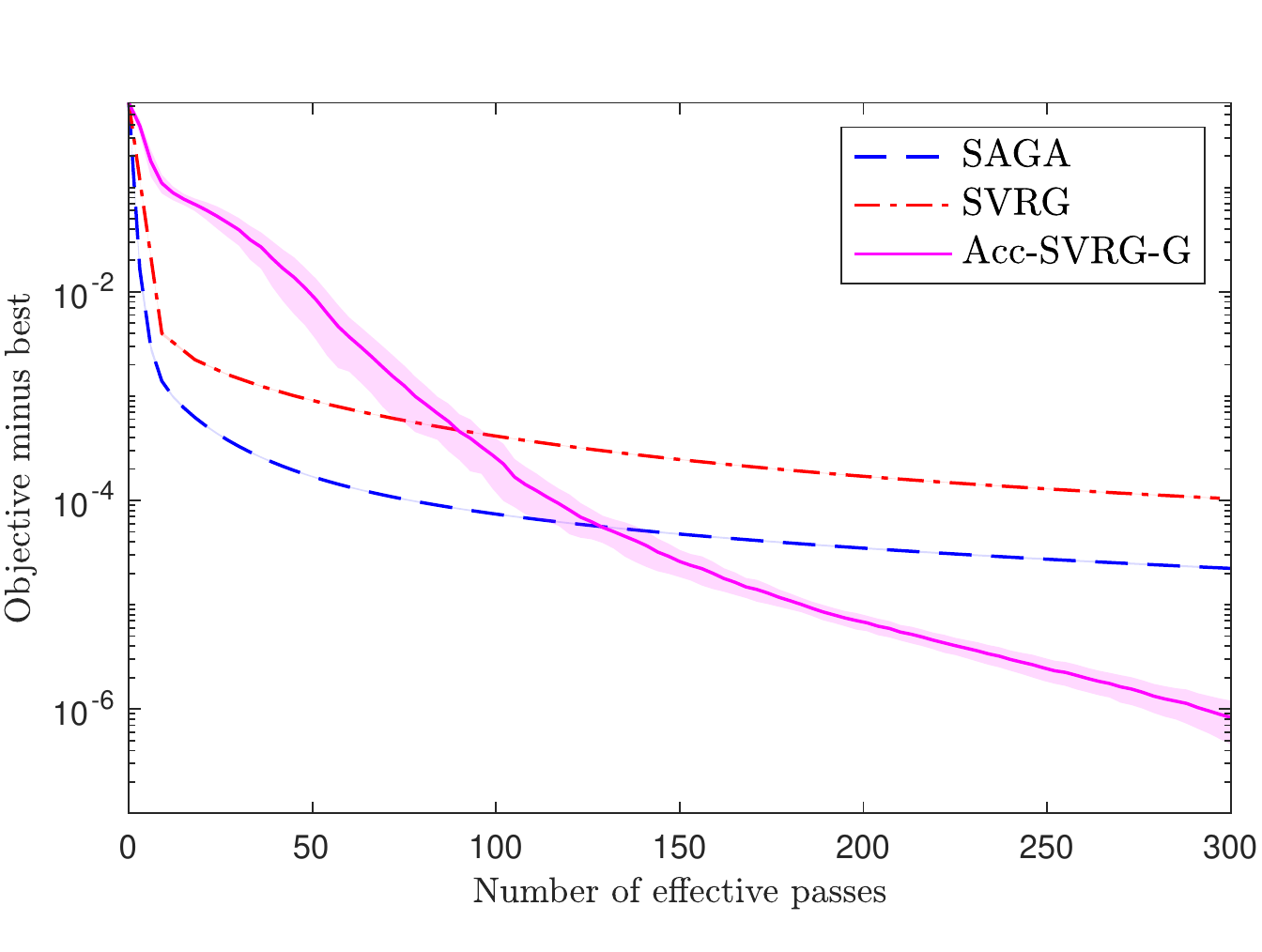}
			\caption{\textsf{w8a} dataset. Measuring the function value.}
			\label{w8a_function}
		\end{subfigure}
		\caption{Performance evaluations. Run $20$ seeds. Shaded bands indicate $\pm 1$ standard deviation.}
		\label{Eval_Acc_SVRG_G}
	\end{center}
\end{figure}

We did some experiments to justify the theoretical results (Theorem \ref{thm:Acc-SVRG-G-multiple-stage}) of Acc-SVRG-G. We compared it with non-accelerated methods including L2S \citep{li20a}, SVRG \citep{SVRG,Prox_SVRG} and SAGA \citep{SAGA} under their original optimality measures. Note that other stochastic approaches in Table \ref{gradient_table} require fixing the accuracy $\epsilon$ in advance, and thus it is not convenient to compare them in the form of Figure \ref{Eval_Acc_SVRG_G}. For measuring the gradient norm, we simply tracked the smallest norm of all the full gradient computed to reduce complexity. Since the figures are in logarithmic scale, the deviation bands are asymmetric, and will emphasize the passes that have large deviations.

\textbf{Setups.} We ran the experiments on a Macbook Pro with a quad-core Intel Core
i7-4870HQ with 2.50GHz cores, 16GB RAM, macOS Big Sur with Clang 12.0.5 and MATLAB R2020b. We were optimizing the binary logistic regression problem $
f(x) = \frac{1}{n} \sum_{i=1}^n {\log{\big(1 + \exp{(-b_i\innr{a_i,x})}\big)}}$ with dataset $a_i \in \R^d$, $b_i \in \lbrace -1, +1\rbrace$, $i\in[n]$. We used datasets from the LIBSVM website \citep{LIBSVM}, including \textsf{a9a} \citep{Dua:2019} (32,561 samples, 123 features) and \textsf{w8a} \citep{platt1998sequential} (49,749 samples, 300 features). We added one dimension as bias to all the datasets. We normalized the datasets and thus for this problem, $L = 0.25$. For Acc-SVRG-G, we chose the parameters according to Theorem \ref{thm:Acc-SVRG-G-multiple-stage}. For L2S, we set $m=n$ and for its $n$-independent step size, we chose $\eta = \frac{c}{L}$ and tuned $c$ using the same grid specified in \citep{li20a}; for the $n$-dependent step size, we set $\eta = \frac{1}{L\sqrt{n}}$ according to Corollary 3 in \citep{li20a}. For SAGA \citep{SAGA}, we chose $\eta = \frac{1}{3L}$ following its theory. For SVRG \citep{Prox_SVRG}, we set $\eta = \frac{1}{4L}$.

\newpage
\section{Proofs of \label{key}Section \ref{sec:OGM-G}}
\label{app:proof-ogm}
To simplify the proof, we denote $D_k \triangleq f(x_k) - f(\xs)$. And we use the following reformulation of interpolation condition \eqref{interpolation_c} (at $(x,y)$) to facilitate our proof.
\begin{equation}\label{interpolation_cr}
	\forall x, y\in \R^d,
	\frac{1}{2L} \left(\norm{\pf{x}}^2 + \norm{\pf{y}}^2\right) + \innr{\pf{y}, x - y- \frac{1}{L}\pf{x}} \leq f(x) - f(y).
\end{equation}

\subsection{Proof to Proposition \ref{prop:OGM-G}}

We define $\theta_{N+1}^2 = \theta_N^2 - \theta_N = 0$. At iteration $k$, we are going to combine the reformulated interpolation conditions \eqref{interpolation_cr} at $(x_k, x_{k+1})$ and $(x_N, x_k)$ with multipliers $\frac{1}{\theta_{k+1}^2}$ and $\frac{1}{\theta_k\theta_{k+1}^2}$, respectively.
\begin{align}
&\begin{aligned}
	&\frac{1}{2L\theta_{k+1}^2}\left(\norm{\pf{x_k}}^2 +  \norm{\pf{x_{k+1}}}^2\right) \!+\! \frac{1}{\theta_{k+1}^2}\innr{\pf{x_{k+1}}, x_k - x_{k+1} - \frac{1}{L} \pf{x_k}} \\\leq{}& \frac{1}{\theta_{k+1}^2}(D_k - D_{k+1}),
\end{aligned}\label{P1}\\
&\begin{aligned}
	&\frac{1}{2L\theta_k\theta_{k+1}^2}\left(\norm{\pf{x_N}}^2 + \norm{\pf{x_k}}^2\right) + \frac{1}{\theta_k\theta_{k+1}^2}\innr{\pf{x_k}, x_N - x_k - \frac{1}{L}\pf{x_N}} \\\leq{}& \frac{1}{\theta_k\theta_{k+1}^2}(D_N - D_k).
\end{aligned}\label{P2}
\end{align}

Using the construction: $x_k - x_{k+1} = \frac{1}{L}\pf{x_k} + (2\theta_{k+1}^3 - \theta_{k+1}^2) v_{k+1}$, we can write \eqref{P1} as
\begin{equation}\label{P3}
	\frac{1}{2L\theta_{k+1}^2}\left(\norm{\pf{x_k}}^2 +  \norm{\pf{x_{k+1}}}^2\right) + (2\theta_{k+1} - 1)\innr{\pf{x_{k+1}},  v_{k+1}} \leq{} \frac{1}{\theta_{k+1}^2}(D_k - D_{k+1}).
\end{equation}

Note that using $\theta_k^2 - \theta_k = \theta_{k+1}^2$, we have $2\theta_{k+1}^3 - \theta_{k+1}^2 = \theta_{k+1}^4 - \theta_{k+2}^4$. Then, 
\[
\begin{aligned}
	x_k - x_N = \sum_{i=k}^{N-1} {(x_i - x_{i+1})} &=  \frac{1}{L}\sum_{i=k}^{N-1}{\pf{x_i}} + \sum_{i=k}^{N-1}{(\theta_{i+1}^4 - \theta_{i+2}^4) v_{i+1}}\\
	&= \frac{1}{L}\sum_{i=k}^{N-1}{\pf{x_i}} + \theta_{k+1}^4 v_{k+1} + \sum_{i=k}^{N-2}{\theta_{i+2}^4 (v_{i+2} - v_{i+1})} \\
	&\meq{a} \frac{1}{L}\sum_{i=k}^{N-1}{\pf{x_i}} + \theta_{k+1}^4 v_{k+1} + \sum_{i=k}^{N-2}{ \frac{\theta_{i+2}^2}{L\theta_{i+1}}\pf{x_{i+1}}} \\
	&\meq{b} \theta_{k+1}^4 v_k + \sum_{i=k}^{N-1}{ \frac{\theta_i}{L}\pf{x_i}},
\end{aligned}
\]
where $\mar{a}$ and $\mar{b}$ use the construction: $v_{k+1} = v_k + \frac{1}{L\theta_k\theta_{k+1}^2}\pf{x_k}$.

Thus, \eqref{P2} can be written as
\[
\begin{aligned}
	\frac{1}{\theta_k\theta_{k+1}^2}(D_N - D_k)
	\geq{}& \frac{1}{2L\theta_k\theta_{k+1}^2}\norm{\pf{x_N}}^2 - \frac{\theta_k^2 + \theta_{k+1}^2}{2L\theta_k^2\theta_{k+1}^2}\norm{\pf{x_k}}^2 \\&- \frac{\theta_{k+1}^2}{\theta_k}\innr{\pf{x_k}, v_k} -  \sum_{i=k+1}^{N}{\frac{\theta_i}{L\theta_k\theta_{k+1}^2}\innr{\pf{x_k},\pf{x_i}}}.
\end{aligned}
\]
Summing this inequality and \eqref{P3}, and using the relation $\theta_k^2 - \theta_k = \theta_{k+1}^2$, we obtain

\begin{equation}\label{P5}
	\begin{aligned}
		&\left(\frac{1}{\theta_{k+1}^2} - \frac{1}{\theta_k^2}\right)\left(D_N - \frac{1}{2L}\norm{\pf{x_N}}^2\right) + \left(\frac{1}{\theta_k^2} D_k - \frac{1}{\theta_{k+1}^2} D_{k+1}\right)\\ 
		\geq{}& \left(\frac{1}{2L\theta_{k+1}^2}\norm{\pf{x_{k+1}}}^2  - \frac{1}{2L\theta_k^2}\norm{\pf{x_k}}^2\right)\\ &+\left(\frac{\theta_{k+2}^2}{\theta_{k+1}}\innr{\pf{x_{k+1}},  v_{k+1}} - \frac{\theta_{k+1}^2}{\theta_k}\innr{\pf{x_k}, v_k}\right)  \\
		&+ \underbrace{\theta_{k+1}\innr{\pf{x_{k+1}},  v_{k+1}} -  \sum_{i=k+1}^{N}{\frac{\theta_i}{L\theta_k\theta_{k+1}^2}\innr{\pf{x_k},\pf{x_i}}}}_{\mathcal{R}_1}.
	\end{aligned}
\end{equation}
\subsection{Proof to Theorem \ref{thm:OGM-G}}
\label{app:proof-OGM-G}
It is clear that except for $\mathcal{R}_1$, all terms in \eqref{P5} telescope. Since $v_{k+1} = \sum_{i=0}^{k} {\frac{1}{L\theta_i\theta_{i+1}^2}\pf{x_{i}}}$, by defining a matrix $P\in \R^{(N+1)\times (N+1)}$ with $P_{ki} = \frac{\theta_k}{L\theta_i\theta_{i+1}^2}\innr{\pf{x_k},  \pf{x_{i}}}$, we can write $\mathcal{R}_1$ as
$
\sum_{i=0}^{k} {P_{(k+1)i}} -  \sum_{i=k+1}^{N}{P_{ik}}.
$
Summing these terms from $k=0$ to $N-1$, we obtain
\[
	\sum_{k=0}^{N-1}{\sum_{i=0}^{k} {P_{(k+1)i}}} -  \sum_{k=0}^{N-1}{\sum_{i=k+1}^{N}{P_{ik}}}
	= \sum_{k=1}^{N}{\sum_{i=0}^{k-1} {P_{ki}}} -  \sum_{i=0}^{N-1}{\sum_{k=i+1}^{N}{P_{ki}}} = 0.
\]
Both of the summations are equal to the sum of the lower triangular entries of $P$. 

Then, telescoping \eqref{P5} from $k=0$ to $N-1$ (note that $v_0 = \mathbf{0}$), we obtain
\[
	\left(1 - \frac{1}{\theta_0^2}\right)\left(D_N - \frac{1}{2L}\norm{\pf{x_N}}^2\right) 
	\geq{} D_N - \frac{1}{\theta_0^2} D_0  + \frac{1}{2L}\norm{\pf{x_N}}^2-\frac{1}{2L\theta_0^2}\norm{\pf{x_0}}^2.
\]
Using $D_0 \geq \frac{1}{2L}\norm{\pf{x_0}}^2$ and $D_N \geq \frac{1}{2L}\norm{\pf{x_N}}^2$, we obtain
\[
\norm{\pf{x_N}}^2\leq \frac{2L D_0}{\theta_0^2}. 
\] 
Since $\theta_k = \frac{1 + \sqrt{1 + 4\theta_{k+1}^2}}{2} \geq \frac{1}{2} + \theta_{k+1} \Rightarrow \theta_k \geq \frac{N-k}{2} + 1 \Rightarrow \theta_0 \geq \frac{N+2}{2}$, we have
\[
\norm{\pf{x_N}}^2\leq \frac{8L \big(f(x_0) - f(\xs)\big)}{(N+2)^2}.
\]

\subsection{Proof to Theorem \ref{thm:M-OGM-G}}
Define for $k = 0, \ldots, N$,
\[
\begin{aligned}
	\tau_k \triangleq \frac{(N-k+2)(N-k+3)}{6},\ \ \delta_{k+1} \triangleq \frac{12}{(N-k+1)(N-k+2)(N-k+3)}= \frac{1}{\tau_{k+1}} - \frac{1}{\tau_k}.
\end{aligned}
\]
At iteration $k$, we are going to combine the reformulated interpolation conditions \eqref{interpolation_cr} at $(x_k, x_{k+1})$ and $(x_N, x_k)$ with multipliers $\frac{1}{\tau_{k+1}}$ and $\delta_{k+1}$, respectively.
\begin{align}
&\frac{1}{2L\tau_{k+1}}\left(\norm{\pf{x_k}}^2 +  \norm{\pf{x_{k+1}}}^2\right) \!+\! \frac{1}{\tau_{k+1}}\innr{\pf{x_{k+1}}, x_k \!-\! x_{k+1} \!-\! \frac{1}{L} \pf{x_k}} \leq{} \frac{1}{\tau_{k+1}}(D_k \!-\! D_{k+1}),\label{P11}\\
&\frac{\delta_{k+1}}{2L}\left(\norm{\pf{x_N}}^2 + \norm{\pf{x_k}}^2\right) + \delta_{k+1}\innr{\pf{x_k}, x_N - x_k - \frac{1}{L}\pf{x_N}} \leq{} \delta_{k+1}(D_N - D_k). \label{P12}
\end{align}

Note that from the construction of Algorithm \ref{alg:M-OGM-G},
\[
\begin{gathered}
	x_k - x_{k+1} -\frac{1}{L}\pf{x_k} = \frac{(N-k)(N-k+1)(N-k+2)}{6} v_{k+1},\\
	x_k - x_N = \sum_{i=k}^{N-1}{\frac{1}{L}\pf{x_i}} + \sum_{i=k}^{N-1}{\frac{(N-i)(N-i+1)(N-i+2)}{6} v_{i+1}}.
\end{gathered}
\]
Thus, \eqref{P11} can be written as
\begin{equation}\label{P111}
	\frac{1}{2L\tau_{k+1}}\left(\norm{\pf{x_k}}^2 +  \norm{\pf{x_{k+1}}}^2\right) + (N-k)\innr{\pf{x_{k+1}}, v_{k+1}}\leq \frac{1}{\tau_{k+1}}(D_k - D_{k+1}).
\end{equation}

Defining $\mathcal{Q}(j) \triangleq (j+3)(j+2)(j+1)j$, we have $\mathcal{Q}(j) - \mathcal{Q}(j-1) = 4j(j+1)(j+2)$. Then,
\[
\begin{aligned}
	x_k - x_N &= \sum_{i=k}^{N-1}{\frac{1}{L}\pf{x_i}} + \frac{1}{24}\sum_{i=k}^{N-1}{(\mathcal{Q}(N-i) - \mathcal{Q}(N-i-1)) v_{i+1}}\\
	&= \sum_{i=k}^{N-1}{\frac{1}{L}\pf{x_i}} + \frac{1}{24}\left(\mathcal{Q}(N-k)v_{k+1} + \sum_{i=k+1}^{N-1}{\mathcal{Q}(N-i)(v_{i+1} - v_i)}\right)\\
	&\meq{a}\frac{\mathcal{Q}(N-k)}{24} v_{k+1} + \frac{1}{L} \pf{x_k} + \sum_{i=k+1}^{N-1} {\frac{1}{L}\left(\frac{\mathcal{Q}(N-i)\delta_{i+1}}{24} + 1\right) \pf{x_i}}\\
	&\meq{b} \frac{\mathcal{Q}(N-k)}{24} v_k + \sum_{i=k}^{N-1} {\frac{N-i+2}{2L} \pf{x_i}},
\end{aligned}
\]
where $\mar{a}$ and $\mar{b}$ use the construction $v_{k+1} = v_k + \frac{\delta_{k+1}}{L}\pf{x_k}$.

Thus, \eqref{P12} can be written as
\[
\begin{aligned}
	\delta_{k+1}(D_N - D_k)
	\geq{}& \frac{\delta_{k+1}}{2L}\left(\norm{\pf{x_N}}^2 + \norm{\pf{x_k}}^2\right) -\frac{N-k}{2} \innr{\pf{x_k}, v_k}  \\ &- \frac{(N-k+2)\delta_{k+1}}{2L} \norm{\pf{x_k}}^2 - \sum_{i=k+1}^{N} {\frac{(N-i+2)\delta_{k+1}}{2L}\innr{\pf{x_k}, \pf{x_i}}}.
\end{aligned}
\]
Summing the above inequality and \eqref{P111}, we obtain
\begin{equation}\label{P19}
\begin{aligned}
	&\left(\frac{1}{\tau_{k+1}} - \frac{1}{\tau_k}\right) \left(D_N- \frac{1}{2L}\norm{\pf{x_N}}^2\right) + \left(\frac{1}{\tau_k} D_k - \frac{1}{\tau_{k+1}}D_{k+1}\right)\\	\geq{}& \left(\frac{1}{2L\tau_{k+1}} \norm{\pf{x_{k+1}}}^2 - \frac{1}{2L\tau_k}\norm{\pf{x_k}}^2\right) + \frac{\delta_{k+1}}{2L}\norm{\pf{x_k}}^2 \\
	&+ \left(\frac{N-k-1}{2} \innr{\pf{x_{k+1}},v_{k+1}} - \frac{N-k}{2}\innr{\pf{x_k},v_k}\right) \\ &+ \frac{N-k+1}{2} \innr{\pf{x_{k+1}},v_{k+1}} - \sum_{i=k+1}^N {\frac{(N-i+2)\delta_{k+1}}{2L}\innr{\pf{x_k}, \pf{x_i}}}.
\end{aligned}
\end{equation}
Since 
$
v_{k+1} = \sum_{i=0}^{k} {\frac{\delta_{i+1}}{L}\pf{x_i}}
$, the last two terms above have a similar structure as $\mathcal{R}_1$ at \eqref{P5}. Define a matrix $P\in \R^{(N+1)\times (N+1)}$ with $P_{ki} = \frac{(N-k+2)\delta_{i+1}}{2L}\innr{\pf{x_k},\pf{x_i}}$. The last two terms above can be written as $\sum_{i=0}^{k} {P_{(k+1)i}} -  \sum_{i=k+1}^{N}{P_{ik}}$. If we sum these terms from $k = 0,\ldots, N-1$, they sum up to $0$ (see Section \ref{app:proof-OGM-G}). Then, by telescoping \eqref{P19} from $k = 0,\ldots, N-1$, we obtain
\[
\begin{aligned}
	&\frac{1}{2L} \norm{\pf{x_N}}^2 - \frac{1}{2L\tau_0}\norm{\pf{x_0}}^2 + \frac{1 - \frac{1}{\tau_0}}{2L}\norm{\pf{x_N}}^2  + \sum_{k=0}^{N-1}{\frac{\delta_{k+1}}{2L}\norm{\pf{x_k}}^2} \\\leq{}& \left(1 - \frac{1}{\tau_0}\right) D_N + \frac{1}{\tau_0} D_0 - D_N.\\
\end{aligned}
\]

Finally, using $D_0 \geq \frac{1}{2L}\norm{\pf{x_0}}^2$ and $D_N \geq \frac{1}{2L}\norm{\pf{x_N}}^2$, we obtain
\begin{equation}\label{P20}
	\norm{\pf{x_N}}^2+ \sum_{k=0}^{N-1}{\frac{\delta_{k+1}}{2}\norm{\pf{x_k}}^2} \leq \frac{2L}{\tau_0} D_0 = \frac{12L \big(f(x_0) - f(\xs)\big)}{(N+2)(N+3)}.
\end{equation}

\subsection{Proof to Corollary \ref{coroll:NAG+M-OGM-G}}
We assume $N$ is divisible by $2$ for simplicity. After running $N/2$ iterations of NAG, we obtain an output $x_{N/2}$ satisfying  (cf. Theorem 2.2.2 in \citep{AGD3})
\[
	f(x_{N/2}) - f(\xs) = O\left(\frac{LR_0^2}{N^2}\right).
\] Then, let $x_{N/2}$ be the input of Algorithm \ref{alg:M-OGM-G}. Using \eqref{P20}, after running another $N/2$ iterations of Algorithm \ref{alg:M-OGM-G}, we obtain \[\norm{\pf{x_N}}^2 = O\left(\frac{L^2R_0^2}{N^4}\right).\]

\section{Proofs of Section \ref{sec:Acc-SVRG-G}}
\label{app:proof-acc-svrg-g}

\subsection{Proof to Proposition \ref{prop:Acc-SVRG-final}}

	Using the interpolation condition \eqref{interpolation_c} at $(\xs, y_k)$, we obtain
	\begin{align}
		f(y_k) - f(\xs) \leq{}& \innr{\pf{y_k}, y_k - \xs} - \frac{1}{2L}\norm{\pf{y_k}}^2 \nonumber \\
		\mleq{\star}{}& \frac{1 - \tau_k}{\tau_k}\innr{\pf{y_k}, \tilde{x}_k - y_k} - \frac{1 - \tau_k}{L\tau_k}\innr{\pf{y_k},\pf{\tilde{x}_k}}\label{P7} \\&+ \innr{\pf{y_k}, z_k - \xs} - \frac{1}{2L}\norm{\pf{y_k}}^2,\nonumber
	\end{align}
	where $\mar{\star}$ follows from the construction $y_{k} = \tau_k z_k + \left(1 - \tau_k\right) \left(\tilde{x}_k - \frac{1}{L}\pf{\tilde{x}_k}\right)$.
	
	From the optimality condition of Step \ref{alg-step:mirror_c}, we can conclude that
	\begin{align}
		&\mathcal{G}_k + \alpha_k (z_{k+1} - z_k) = \mathbf{0} \nonumber\\ \mra{a}{}&
		\innr{\mathcal{G}_k, z_k - \xs} = \frac{1}{2\alpha_k}\norm{\mathcal{G}_k}^2 + \frac{\alpha_k}{2} \left(\norm{z_k - \xs}^2 - \norm{z_{k+1} - \xs}^2\right) \nonumber\\
		\mra{b}{}&\innr{\pf{y_k}, z_k - \xs} = \frac{1}{2\alpha_k}\Eik{\norm{\mathcal{G}_k}^2} + \frac{\alpha_k}{2} \left(\norm{z_k - \xs}^2 - \Eik{\norm{z_{k+1} - \xs}^2}\right), \label{P8}
	\end{align}
	where $\mar{a}$ uses $\innr{u, v} = \frac{1}{2}(\norm{u}^2 + \norm{v}^2 - \norm{u - v}^2)$ and $\mar{b}$ follows from taking the expectation wrt sample $i_k$.
	
	Using the interpolation condition \eqref{interpolation_c} at $(\tilde{x}_k, y_k)$, we can bound $\Eik{\norm{\mathcal{G}_k}^2}$ as
	\begin{align}
		\Eik{\norm{\mathcal{G}_k}^2} ={}& \Eik{\norm{\pfik{y_k} - \pfik{\tilde{x}_k}}^2} + 2\innr{\pf{y_k}, \pf{\tilde{x}_k}} - \norm{\pf{\tilde{x}_k}}^2 \nonumber\\
		\leq{}& 2L\big(f(\tilde{x}_k) - f(y_k) - \innr{\pf{y_k}, \tilde{x}_k - y_k}\big) + 2\innr{\pf{y_k}, \pf{\tilde{x}_k}}  - \norm{\pf{\tilde{x}_k}}^2. \label{P9}
	\end{align}
	
	Combine \eqref{P7}, \eqref{P8} and \eqref{P9}. 
	\[
	\begin{aligned}
		f(y_k) - f(\xs) \leq{}& \frac{L}{\alpha_k}\big(f(\tilde{x}_k) - f(y_k) \big) +  \left(\frac{1 - \tau_k}{\tau_k} -  \frac{L}{\alpha_k}\right)\innr{\pf{y_k}, \tilde{x}_k - y_k} \\& + \left(\frac{1}{\alpha_k}- \frac{1 - \tau_k}{L\tau_k}\right)\innr{\pf{y_k}, \pf{\tilde{x}_k}} \\&+ \frac{\alpha_k}{2} \left(\norm{z_k - \xs}^2 - \Eik{\norm{z_{k+1} - \xs}^2}\right)\\
		& - \frac{1}{2L}\norm{\pf{y_k}}^2 - \frac{1}{2\alpha_k}\norm{\pf{\tilde{x}_k}}^2.
	\end{aligned}
	\]
	
	Substitute the choice $\alpha_k = \frac{L\tau_k}{1 -\tau_k}$.
	\begin{align}
		\frac{1-\tau_k}{\tau_k^2}\big(f(y_k) - f(\xs)\big) \leq{}& \frac{(1 - \tau_k)^2}{\tau_k^2}\big(f(\tilde{x}_k) - f(\xs) \big) + \frac{L}{2} \left(\norm{z_k - \xs}^2 - \Eik{\norm{z_{k+1} - \xs}^2}\right) \nonumber\\
		& - \frac{1 - \tau_k}{2L\tau_k}\norm{\pf{y_k}}^2 - \frac{(1 - \tau_k)^2}{2L\tau_k^2}\norm{\pf{\tilde{x}_k}}^2. \label{P18}
	\end{align}
	
	Note that by construction, $\E{f(\tilde{x}_{k+1})} = p_k\E{f(y_k)} + (1 - p_k)\E{f(\tilde{x}_k)}$, and thus
	\[
	\begin{aligned}
		\frac{1-\tau_k}{\tau_k^2p_k}\E{f(\tilde{x}_{k+1}) - f(\xs)} \leq{}& \frac{(1-\tau_kp_k)(1-\tau_k)}{\tau_k^2p_k}\E{f(\tilde{x}_k) - f(\xs)} \\
		&+ \frac{L}{2} \left(\E{\norm{z_k - \xs}^2} - \E{\norm{z_{k+1} - \xs}^2}\right)\\
		& - \frac{1 - \tau_k}{2L\tau_k}\E{\norm{\pf{y_k}}^2} - \frac{(1 - \tau_k)^2}{2L\tau_k^2}\E{\norm{\pf{\tilde{x}_k}}^2}.
	\end{aligned}
	\]

\subsection{Proof to Theorem \ref{thm:Acc-SVRG-G-single-stage}}

	It can be easily verified that under this choice ($p_k \equiv \frac{1}{n}, \tau_k = \frac{3}{k/n + 6}$), for any $k \geq 0, n \geq 1$,
	\[
	\frac{(1-\tau_{k+1}p_{k+1})(1-\tau_{k+1})}{\tau_{k+1}^2p_{k+1}} \leq \frac{1-\tau_k}{\tau_k^2p_k}.
	\]
	
	Then, using Proposition \ref{prop:Acc-SVRG-final}, after summing \eqref{P10} from $k = 0, \ldots, K-1$, we obtain
	\[
	\begin{aligned}
		&\frac{n(1-\tau_{K-1})}{\tau_{K-1}^2}\E{f(\tilde{x}_K) - f(\xs)} + \frac{L}{2} \E{\norm{z_K - \xs}^2} + \sum_{k=0}^{K-1}{\frac{(1 - \tau_k)^2}{2L\tau_k^2}\E{\norm{\pf{\tilde{x}_k}}^2}} \\ \leq{}& (2n-1)\big(f(x_0) - f(\xs)\big) + \frac{L}{2} \norm{x_0 - \xs}^2.
	\end{aligned}
	\]	
	
	Note that $\tau_k \leq \frac{1}{2}, \forall k$. We have the following two consequences of the above inequality.
	\[
	\begin{gathered}
		\E{f(\tilde{x}_K)} - f(\xs) \leq \tau_{K-1}^2\left(4\big(f(x_0) - f(\xs)\big) + \frac{L}{n} \norm{x_0 - \xs}^2\right), \\
		\begin{aligned}
			\E{\norm{\pf{x_{\text{out}}}}^2}={}&\frac{1}{\sum_{k=0}^{K-1}{\tau_k^{-2}}}\sum_{k=0}^{K-1}{\frac{1}{\tau_k^2}\E{\norm{\pf{\tilde{x}_k}}^2}} \\\leq{}& \frac{16nL\big(f(x_0) - f(\xs)\big) + 4L^2 \norm{x_0 - \xs}^2}{\sum_{k=0}^{K-1}{\tau_k^{-2}}}.
		\end{aligned} 
	\end{gathered}
	\]
	
	Substituting the parameter choice, we obtain
	\[
	\begin{gathered}
		\E{f(\tilde{x}_K)} - f(\xs) \leq \frac{36n^2\big(f(x_0) - f(\xs)\big) + 9nL \norm{x_0 - \xs}^2}{(K+6n-1)^2} = \epsilon_f,
		\\
		\E{\norm{\pf{x_{\text{out}}}}^2} \leq \frac{144nL\big(f(x_0) - f(\xs)\big) + 36L^2 \norm{x_0 - \xs}^2}{	\sum_{k=0}^{K-1}{\left(\frac{k}{n} + 6\right)^2}}.
	\end{gathered}
	\]
	
	Note that 
	\[
	\sum_{k=0}^{K-1}{\left(\frac{k}{n} + 6\right)^2} \geq \int_{0}^{K} {\left(\frac{x-1}{n} + 6\right)^2dx} = \frac{(K + 6n - 1)^3 - (6n-1)^3}{3n^2}.
	\]
	
	Thus, 
	\[
	\E{\norm{\pf{x_{\text{out}}}}}^2\leq\E{\norm{\pf{x_{\text{out}}}}^2} 
	\leq \frac{432n^3L\big(f(x_0) - f(\xs)\big) + 108n^2L^2 \norm{x_0 - \xs}^2}{(K + 6n - 1)^3-(6n-1)^3}=\epsilon_g^2.
	\]
	
	Since the expected iteration cost of Algorithm \ref{alg:Acc-SVRG-G} is $\E{\#\text{grad}_k} = p_k (n + 2) + (1 - p_k)2 = 3$, to guarantee $\E{\norm{\pf{x_{\text{out}}}}}\leq \epsilon_g$ and $\E{f(\tilde{x}_K)} - f(\xs) \leq \epsilon_f$, the total oracle complexities are $O\Big(\frac{n(L(f(x_0) - f(\xs)))^{1/3}}{\epsilon_g^{2/3}} + \frac{(nLR_0)^{2/3}}{\epsilon_g^{2/3}}\Big)$ and 
	$O\Big(n\sqrt{\frac{f(x_0) - f(\xs)}{\epsilon_f}} + \frac{\sqrt{nL}R_0}{\sqrt{\epsilon_f}}\Big)$, respectively.

\subsection{Proof to Theorem \ref{thm:Acc-SVRG-G-multiple-stage}}

	First, it can be verified that for any $k \geq 0, n \geq 1$, the following inequality holds.
	\[
	\frac{(1-\tau_{k+1}p_{k+1})(1-\tau_{k+1})}{\tau_{k+1}^2p_{k+1}} \leq \frac{1-\tau_k}{\tau_k^2p_k}.
	\]
	The verification can be done by considering the two cases: (i) $k+8 < 6n$, where $p_k = \frac{6}{k+8}, \tau_k = \frac{1}{2}$, (ii) $k+8 \geq 6n$, in which $p_k = \frac{1}{n}, \tau_k = \frac{3n}{k+8}$.
	
	Then, using Proposition \ref{prop:Acc-SVRG-final}, after summing \eqref{P10} from $k = 0, \ldots, K-1$, we obtain
	\[
	\begin{aligned}
		&\frac{1-\tau_{K-1}}{\tau_{K-1}^2p_{K-1}}\E{f(\tilde{x}_K) - f(\xs)} + \frac{L}{2} \E{\norm{z_K - \xs}^2} + \sum_{k=0}^{K-1}{\frac{(1 - \tau_k)^2}{2L\tau_k^2}\E{\norm{\pf{\tilde{x}_k}}^2}} \\
		\leq{}& \frac{5}{3}\big(f(x_0) - f(\xs)\big) + \frac{L}{2} \norm{x_0 - \xs}^2 \leq{} \frac{4}{3}LR_0^2.
	\end{aligned}
	\]
	
	Note that $\tau_k \leq \frac{1}{2}, \forall k$. We can conclude the following two consequences.
	\begin{gather}
		\E{f(\tilde{x}_K)} - f(\xs) \leq \frac{8}{3}\tau_{K-1}^2p_{K-1}LR_0^2, \label{P14}\\
		\E{\norm{\pf{x_{\text{out}}}}^2} = \frac{1}{\sum_{k=0}^{K-1}{\tau_k^{-2}}}\sum_{k=0}^{K-1}{\frac{1}{\tau_k^2}\E{\norm{\pf{\tilde{x}_k}}^2}}\leq \frac{32L^2R_0^2}{3\sum_{k=0}^{K-1}{\tau_k^{-2}}}.\label{P15}
	\end{gather}
	
	Now we consider two stages.  
	
	\textbf{Stage I (low accuracy stage):} $K + 8 \leq 6n$. In this stage, let the accuracies be $\epsilon_g^2 = \frac{8L^2R_0^2}{3K} \geq \frac{8L^2R_0^2}{3(6n-8)}$ and $\epsilon_f = \frac{4LR_0^2}{K+7} \geq \frac{4LR_0^2}{6n-1}$.  By substituting the parameter choice, \eqref{P14} and \eqref{P15} can be written as
	\[
	\begin{gathered}
		\E{f(\tilde{x}_K)} - f(\xs) \leq \frac{4LR_0^2}{K+7} = \epsilon_f, \\
		\E{\norm{\pf{x_{\text{out}}}}}^2\leq\E{\norm{\pf{x_{\text{out}}}}^2} \leq \frac{8L^2R_0^2}{3K} = \epsilon_g^2.
	\end{gathered}
	\]
	
	Note that the expected iteration cost of Algorithm \ref{alg:Acc-SVRG-G} is $\E{\#\text{grad}_k} = p_k (n + 2) + (1 - p_k)2 = np_k + 2$, and thus the total complexity in this stage is
	\[
	\sum_{k=0}^{K-1}{\E{\#\text{grad}_k}} = n \sum_{k=0}^{K-1}{\frac{6}{k+8}} + 2K \leq 6 n\log{(K+7)} + 12n = O(n\log{K}).
	\]
	
	Thus, the expected oracle complexities in this stage are $O(n\log{\frac{LR_0}{\epsilon_g}})$ and $O(n\log{\frac{LR_0^2}{\epsilon_f}})$, respectively.
	
	\textbf{Stage II (high accuracy stage):} $K+8 > 6n$. In this stage, Algorithm \ref{alg:Acc-SVRG-G} proceeds to find highly accurate solutions (i.e., $\epsilon_g^2 < \frac{8L^2R_0^2}{3(6n-8)}$ and $\epsilon_f < \frac{4LR_0^2}{6n-1}$). Substituting the parameter choice, we can write \eqref{P14} and \eqref{P15} as
	\begin{gather}
		\E{f(\tilde{x}_K)} - f(\xs) \leq \frac{24nLR_0^2}{(K+7)^2} = \epsilon_f, \label{P16}\\
		\E{\norm{\pf{x_{\text{out}}}}^2} \leq \frac{32L^2R_0^2}{3\left(24n - 28 + \sum_{k=6n-7}^{K-1}{\tau_k^{-2}}\right)} \mleq{\star} \frac{288n^2L^2R_0^2}{(K+7)^3 + 432n^3 - 756n^2} = \epsilon_g^2, \label{P17}
	\end{gather}
	where $\mar{\star}$ follows from 
	\[
	\sum_{k=6n-7}^{K-1}{\tau_k^{-2}} = \frac{1}{9n^2}\sum_{k=6n-7}^{K-1}{(k+8)^2} \geq \frac{1}{9n^2} \int_{6n-7}^K{(x+7)^2dx} = \frac{(K+7)^3}{27n^2} - 8n.
	\]
	
	Then, we count the expected complexity in this stage.
	\[
	\sum_{k=0}^{K-1}{\E{\#\text{grad}_k}} = n\left(\sum_{k=0}^{6n-8}{\frac{6}{k+8}} + \sum_{k=6n-7}^{K-1}{\frac{1}{n}}\right) + 2K \leq 6n\log{(6n)} + 3K -6n+7.
	\]
	
	Finally, combining with \eqref{P16} and \eqref{P17}, we can conclude that the total expected oracle complexities in this stage are $O\Big(n\log{n} + \frac{(nLR_0)^{2/3}}{\epsilon_g^{2/3}}\Big)$ and $O\Big(n\log{n} + \frac{\sqrt{nL}R_0}{\sqrt{\epsilon_f}} \Big)$, respectively.

\subsection{Proof to Theorem \ref{thm:Acc-SVRG-G-first-epoch}}

	We start at inequality \eqref{P18} in the proof of Proposition \ref{prop:Acc-SVRG-final}, which is the consequence of one iteration $k$ in Algorithm \ref{alg:Acc-SVRG-G}. Due to the constant choice of $\tau_k\equiv \tau$, we have
	\[
	\begin{aligned}
		f(y_k) - f(\xs) \leq{}& (1 - \tau)\big(f(\tilde{x}_k) - f(\xs) \big) + \frac{L\tau^2}{2(1-\tau)} \left(\norm{z_k - \xs}^2 - \Eik{\norm{z_{k+1} - \xs}^2}\right) \\
		& - \frac{\tau}{2L}\norm{\pf{y_k}}^2 - \frac{1 - \tau}{2L}\norm{\pf{\tilde{x}_k}}^2.
	\end{aligned}
	\]
	
	Since we fix $p_k \equiv p$ as a constant and terminate Algorithm \ref{alg:Acc-SVRG-G} at the first time $\tilde{x}_{k+1} = y_k$ (denoted as the iteration $N$), it is clear that the random variable $N$ follows the geometric distribution with parameter $p$, that is, for $ k=0,1,2,\ldots, \prob{N = k} = (1-p)^kp$. Moreover, since we have $\tilde{x}_N = \tilde{x}_{N-1} =\cdots=\tilde{x}_0 = x_0$, using the above inequality at iteration $N$, we obtain
	\[
	\begin{aligned}
		\E{f(\tilde{x}_{N+1})} -\! f(\xs) \!\leq{}& (1 - \tau)\big(f(x_0) -\! f(\xs) \big) + \frac{L\tau^2}{2(1-\tau)} \left(\E{\norm{z_N - \xs}^2 \!- \norm{z_{N+1} - \xs}^2}\right)\\& - \frac{\tau}{2L}\E{\norm{\pf{\tilde{x}_{N+1}}}^2} - \frac{1 - \tau}{2L}\norm{\pf{x_0}}^2\\
		\meq{\star}{}& (1 - \tau)\big(f(x_0) -\! f(\xs) \big) + \frac{L\tau^2p}{2(1-\tau)} \left(\norm{x_0 - \xs}^2 - \E{\norm{z_{N+1} - \xs}^2}\right)\\& - \frac{\tau}{2L}\E{\norm{\pf{\tilde{x}_{N+1}}}^2} - \frac{1 - \tau}{2L}\norm{\pf{x_0}}^2,
	\end{aligned}
	\]
	where $\mar{\star}$ follows from 
	\[
	\begin{aligned}
		\E{\norm{z_{N+1} - \xs}^2} ={}& \frac{1}{1-p}\left(\sum_{k=0}^{\infty} {(1-p)^kp\E{\norm{z_k - \xs}^2}} - p\norm{z_0 - \xs}^2\right) \\
		={}& \frac{1}{1 - p}\left(\E{\norm{z_N - \xs}^2} - p\norm{z_0 - \xs}^2\right).
	\end{aligned}
	\] 
	
	Thus, we can conclude that
	\[
	\begin{aligned}
		\E{f(\tilde{x}_{N+1})} - f(\xs) + \frac{\tau}{2L}\E{\norm{\pf{\tilde{x}_{N+1}}}^2}
		\leq{} \frac{L}{2}\left(1 - \tau + \frac{\tau^2p}{1-\tau}\right)R_0^2.
	\end{aligned}
	\]
	
	Note that $\E{N} = \frac{1-p}{p}$ and the total expected oracle complexity is $n + 2(\E{N}+1) = n+\frac{2}{p}$. We choose $p = \frac{1}{n}$, which leads to an $O(n)$ expected complexity. And we choose $\tau$ by minimizing the ratio $\left(1 - \tau + \frac{\tau^2p}{1-\tau}\right)$ wrt $\tau$. This gives $\tau = 1 - \frac{1}{\sqrt{n+1}} \geq \frac{1}{4}$ and
	\[
	\begin{aligned}
		\E{f(\tilde{x}_{N+1})} - f(\xs) + \frac{1}{8L}\E{\norm{\pf{\tilde{x}_{N+1}}}^2}
		\leq{} \frac{LR_0^2}{\sqrt{n+1}+1}.
	\end{aligned}
	\]

\section{Proofs of Section \ref{sec:R-Acc-SVRG-G}}
\label{app:proof-r-acc-svrg-g}
We analyze Algorithm \ref{alg:R-Acc-SVRG-G} following the ``shifting'' methodology in \citep{zhou2020boosting}, which explores the tight interpolation condition \eqref{interpolation_sc} and leads to a simple and clean proof.

Note that after the regularization at Step \ref{alg-step:regularization}, each $f^{\delta_t}_i$ is $(L+\delta_t)$-smooth and $\delta_t$-strongly convex. We denote $\xsd$ as the unique minimizer of $\min_x{f^{\delta_t}(x)}$. Following \citep{zhou2020boosting}, we define a ``shifted'' version of this problem: $\min_{x} {h^{\delta_t}(x) = \frac{1}{n} \sum_{i=1}^n {h^{\delta_t}_i(x)}}$, where
\[
h^{\delta_t}_i(x) = f^{\delta_t}_i(x) - f^{\delta_t}_i(\xs_{\delta_t}) - \innr{\pfdii{\xs_{\delta_t}}, x - \xs_{\delta_t}} - \frac{\delta_t}{2} \norm{x - \xs_{\delta_t}}^2, \forall i.
\]
It can be easily verified that each $h^{\delta_t}_i$ is $L$-smooth and convex. Note that $h^{\delta_t}_i(\xsd) = h^{\delta_t}(\xsd) = 0$ and $ \phdii{\xsd} = \phd{\xsd} = \mathbf{0}$, which means that $h^{\delta_t}$ and $f^{\delta_t}$ share the same minimizer $\xsd$.

Then, conceptually, we attempts to solve the ``shifted'' problem using an ``shifted'' SVRG gradient estimator: $\mathcal{H}^{\delta_t}_k \triangleq \phdik{y_k} - \phdik{\tilde{x}_k} + \phd{\tilde{x}_k}$. Clearly, the gradient of $h^{\delta_t}$ is not accessible due to the unknown $\xsd$. \citet{zhou2020boosting} proposed a technical lemma (Lemma \ref{lem:shifting} below) to bypass this issue. Since the relation $\mathcal{H}^{\delta_t}_k = \mathcal{G}^{\delta_t}_k - \delta_t (y_k - \xsd)$ holds, we can use Lemma \ref{lem:shifting} as an instantiation of the ``shifted'' gradient oracle, see \citep{zhou2020boosting} for details.

\subsection{Technical Lemmas}

\begin{lemmas}[Lemma 1 in \citep{zhou2020boosting}, the ``shifting'' technique]
	\label{lem:shifting}
	Given a gradient estimator $\mathcal{G}_y$ and vectors $z^+, z^-, y, \xs\in \R^d$, fix the updating rule $z^+ = \arg\min_{x} \big\{\innr{\mathcal{G}_y, x} + \frac{\alpha}{2} \norm{x - z^-}^2 + \frac{\delta}{2}	\norm{x - y}^2 \big\}$. Suppose that we have a shifted gradient estimator $\mathcal{H}_y$ satisfying the relation $\mathcal{H}_y = \mathcal{G}_y - \delta (y - \xs)$, it holds that
	\[
	\innr{\mathcal{H}_y, z^- - \xs} = \frac{\alpha}{2} \left(\norm{z^- - \xs}^2 - \left(1 + \frac{\delta}{\alpha}\right)^2\norm{z^+ - \xs}^2\right) + \frac{1}{2\alpha} \norm{\mathcal{H}_y}^2.
	\]
\end{lemmas}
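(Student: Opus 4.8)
The plan is to reduce the identity to a one-line algebraic manipulation by first invoking the first-order optimality condition of the subproblem that defines $z^+$, and then eliminating both $\mathcal{G}_y$ and $y$ in favor of $z^+$, $z^-$ and $\xs$. Since the map $x \mapsto \innr{\mathcal{G}_y, x} + \frac{\alpha}{2}\norm{x - z^-}^2 + \frac{\delta}{2}\norm{x - y}^2$ is differentiable and $(\alpha+\delta)$-strongly convex, its unique minimizer $z^+$ satisfies $\mathcal{G}_y + \alpha(z^+ - z^-) + \delta(z^+ - y) = \mathbf{0}$, i.e.\ $\mathcal{G}_y = \alpha(z^- - z^+) + \delta(y - z^+)$. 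Substituting the defining relation $\mathcal{H}_y = \mathcal{G}_y - \delta(y - \xs)$ makes the term $\delta y$ cancel, leaving the clean expression $\mathcal{H}_y = \alpha(z^- - z^+) + \delta(\xs - z^+)$, which no longer involves $y$ or $\mathcal{G}_y$.

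From here I would introduce the shorthand $a \triangleq z^- - \xs$ and $b \triangleq z^+ - \xs$, so that the previous display reads $\mathcal{H}_y = \alpha a - (\alpha+\delta) b$. The left-hand side of the claimed identity is then $\innr{\mathcal{H}_y, a} = \alpha\norm{a}^2 - (\alpha+\delta)\innr{a, b}$. For the right-hand side, expand $\frac{1}{2\alpha}\norm{\mathcal{H}_y}^2 = \frac{1}{2\alpha}\big(\alpha^2\norm{a}^2 - 2\alpha(\alpha+\delta)\innr{a,b} + (\alpha+\delta)^2\norm{b}^2\big)$ and note that $\big(1 + \tfrac{\delta}{\alpha}\big)^2 = \tfrac{(\alpha+\delta)^2}{\alpha^2}$, so that $\frac{\alpha}{2}\big(1+\tfrac{\delta}{\alpha}\big)^2\norm{b}^2 = \frac{(\alpha+\delta)^2}{2\alpha}\norm{b}^2$ exactly cancels the $\norm{b}^2$ contribution coming from $\frac{1}{2\alpha}\norm{\mathcal{H}_y}^2$. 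Collecting what remains yields $\alpha\norm{a}^2 - (\alpha+\delta)\innr{a,b}$, matching the left-hand side; this completes the verification.

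I do not expect a genuine obstacle: once the optimality condition is used to remove $\mathcal{G}_y$ and $y$, the statement is a pure quadratic identity, and the only place that requires care is tracking the coefficient $(1+\delta/\alpha)^2$ when completing the square — this is precisely the factor that records the strong-convexity boost $\delta$ of the prox step. An equivalent but slightly longer route is to start from the exact three-point equality for minimizing a quadratic, $\innr{\mathcal{G}_y, z^+ - \xs} + \frac{\alpha+\delta}{2}\norm{z^+ - \xs}^2 = \frac{\alpha}{2}\big(\norm{z^- - \xs}^2 - \norm{z^+ - z^-}^2\big) + \frac{\delta}{2}\big(\norm{y - \xs}^2 - \norm{z^+ - y}^2\big)$, and then substitute $\mathcal{H}_y = \mathcal{G}_y - \delta(y-\xs)$; the direct computation above is cleaner, so that is the one I would present.
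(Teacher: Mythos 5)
Your proof is correct, and it is essentially the only natural argument: apply the first-order optimality condition of the prox subproblem, substitute the shift relation to eliminate $\mathcal{G}_y$ and $y$ in favor of $z^+-\xs$ and $z^--\xs$, and verify the resulting quadratic identity. The paper does not reproduce a proof of this lemma (it is imported directly as Lemma 1 from \citep{zhou2020boosting}), but the cited proof proceeds along the same lines; your derivation is a complete and clean verification. One small stylistic remark: the key observation worth highlighting is that $\mathcal{H}_y = \alpha(z^- - \xs) - (\alpha+\delta)(z^+ - \xs)$, after which the identity is simply completing the square in $\innr{\mathcal{H}_y, z^- - \xs}$; this makes it transparent why the factor $(1+\delta/\alpha)^2$ appears on $\norm{z^+-\xs}^2$, namely because $\mathcal{H}_y$ carries $z^+-\xs$ with coefficient $-(\alpha+\delta)$ rather than $-\alpha$.
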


\begin{lemmas}[The regularization technique \citep{nesterov2012make}]
	\label{lem:regular}
	For an $L$-smooth and convex function $f$ and $\delta > 0$, defining $f^\delta(x) = f(x) + \frac{\delta}{2} \norm{x - x_0}^2, \forall x$ and denoting $\xs_\delta$ as the unique minimizer of $f^\delta$, we have
	\begin{enumerate}[label=(\roman*),leftmargin=1cm, nosep]
		\item $f^\delta$ is $(L+\delta)$-smooth and $\delta$-strongly convex. 
		\item $f^\delta(x_0) - f^\delta(\xs_\delta) \leq f(x_0) - f(\xs)$.
		\item $\norm{x_0 - \xs_\delta}^2 \leq \norm{x_0 - \xs}^2, \forall \xs\in \Xs.$
		\item $\norm{x_0 - \xs_\delta}^2 \leq \frac{2}{\delta} \big(f(x_0) - f(\xs)\big)$.
	\end{enumerate}
\end{lemmas}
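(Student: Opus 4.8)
The plan is to establish the four assertions in the order given, using each earlier part as a building block. Every one of them is a short computation once one writes $\nabla f^\delta(x) = \pf{x} + \delta(x - x_0)$ and recalls the first-order optimality of $\xs_\delta$, so I do not anticipate a genuine obstacle; the only thing that needs care is keeping the directions of the inequalities straight when one mixes the plain convexity of $f$ with the strong convexity contributed by the regularizer.

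For (i), I would bound $\norm{\nabla f^\delta(x) - \nabla f^\delta(y)} \le \norm{\pf{x} - \pf{y}} + \delta\norm{x-y} \le (L+\delta)\norm{x-y}$ by the triangle inequality and $L$-smoothness of $f$, which gives $(L+\delta)$-smoothness. For $\delta$-strong convexity I would add the convexity inequality $f(x) - f(y) - \innr{\pf{y}, x-y} \ge 0$ to the exact quadratic identity $\tfrac{\delta}{2}\norm{x-x_0}^2 - \tfrac{\delta}{2}\norm{y-x_0}^2 - \delta\innr{y-x_0,\, x-y} = \tfrac{\delta}{2}\norm{x-y}^2$; together these yield $f^\delta(x) - f^\delta(y) - \innr{\nabla f^\delta(y), x-y} \ge \tfrac{\delta}{2}\norm{x-y}^2$, which is the claimed strong convexity.

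For (ii), note that $f^\delta(x_0) = f(x_0)$ since the regularizer vanishes at $x_0$, so it suffices to show $f^\delta(\xs_\delta) \ge f(\xs)$; this follows from $f^\delta(\xs_\delta) = f(\xs_\delta) + \tfrac{\delta}{2}\norm{\xs_\delta - x_0}^2 \ge f(\xs_\delta) \ge f(\xs)$, the last inequality because $\xs$ globally minimizes $f$. For (iii), I would invoke the $\delta$-strong convexity from (i): since $\xs_\delta$ minimizes $f^\delta$, for every $\xs \in \Xs$ we have $f^\delta(\xs) \ge f^\delta(\xs_\delta) + \tfrac{\delta}{2}\norm{\xs - \xs_\delta}^2$; expanding $f^\delta$, cancelling the common $\tfrac{\delta}{2}$, and using $f(\xs_\delta) \ge f(\xs)$ to discard the function-value terms leaves $\norm{\xs - x_0}^2 \ge \norm{\xs_\delta - x_0}^2$ (in fact with a spare $+\norm{\xs - \xs_\delta}^2$ on the right). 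Alternatively, the optimality condition $\pf{\xs_\delta} = \delta(x_0 - \xs_\delta)$ together with monotonicity of $\pf{\cdot}$ and $\pf{\xs} = \mathbf{0}$ gives $\innr{x_0 - \xs_\delta,\, \xs_\delta - \xs} \ge 0$, after which expanding $\norm{x_0 - \xs}^2 = \norm{x_0 - \xs_\delta}^2 + 2\innr{x_0-\xs_\delta,\,\xs_\delta-\xs} + \norm{\xs_\delta-\xs}^2$ finishes it. Finally, for (iv) I would apply the $\delta$-strong convexity bound at the point $x_0$, namely $\tfrac{\delta}{2}\norm{x_0 - \xs_\delta}^2 \le f^\delta(x_0) - f^\delta(\xs_\delta)$, and chain it with the bound $f^\delta(x_0) - f^\delta(\xs_\delta) \le f(x_0) - f(\xs)$ already proved in (ii). The hardest part, such as it is, is purely bookkeeping: ensuring the strong-convexity estimate is always read off \emph{from the minimizer} and that the quadratic identity in (i) is written exactly.
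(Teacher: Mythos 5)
Your proof is correct and follows essentially the same route as the paper: $(L+\delta)$-smoothness and $\delta$-strong convexity by direct computation for (i), the chain $f^\delta(x_0)=f(x_0)$ and $f^\delta(\xs_\delta)\ge f(\xs_\delta)\ge f(\xs)$ for (ii), strong convexity at $(\xs,\xs_\delta)$ combined with $f(\xs_\delta)\ge f(\xs)$ for (iii), and strong convexity at $(x_0,\xs_\delta)$ chained with (ii) for (iv). Your alternative argument for (iii) via gradient monotonicity is a pleasant and equally valid shortcut, but not a fundamentally different approach.
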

\begin{proof}
	\textit{(i)} can be easily checked by the definition of $L$-smoothness and strong convexity. \textit{(ii)} follows from $f^\delta(x_0) = f(x_0)$ and $f^\delta(\xs_\delta) \geq f(\xs_\delta) \geq f(\xs)$. For \textit{(iii)}, using the strong convexity of $f^\delta$ at $(\xs, \xs_\delta), \forall \xs \in \Xs$, we obtain
	\[
	\begin{aligned}
		&f^\delta(\xs) - f^\delta(\xs_\delta) \geq \frac{\delta}{2} \norm{\xs - \xs_\delta}^2\\ \Rightarrow{}&  f(\xs) + \frac{\delta}{2} \norm{\xs - x_0}^2 - f(\xs_\delta) - \frac{\delta}{2} \norm{\xs_\delta - x_0}^2 \geq \frac{\delta}{2} \norm{\xs - \xs_\delta}^2\\
		\Rightarrow{}& \frac{\delta}{2} \norm{x_0 - \xs}^2 - \big(f(\xs_\delta) - f(\xs)\big) \geq \frac{\delta}{2} \norm{x_0 - \xs_\delta}^2 + \frac{\delta}{2} \norm{\xs - \xs_\delta}^2.
	\end{aligned}
	\]
	Then \textit{(iii)} follows from the non-negativeness of $f(\xs_\delta) - f(\xs)$ and $\norm{\xs - \xs_\delta}^2$. For \textit{(iv)}, using the strong convexity of $f^\delta$ at $(x_0, \xs_\delta)$ and \textit{(ii)}, we have $\norm{x_0 - \xs_\delta}^2 \leq \frac{2}{\delta} \big(f^\delta(x_0) - f^\delta(\xs_\delta)\big) \leq \frac{2}{\delta} \big(f(x_0) - f(\xs)\big)$.
\end{proof}

\subsection{Proof to Proposition \ref{prop:R-params}}
	Denoting $\kappa_t = \frac{L+\delta_t}{\delta_t}$, we can write the equation $\left(1 - \frac{p(\alpha + \delta_t)}{\alpha + L + \delta_t}\right)\left(1 + \frac{\delta_t}{\alpha}\right)^2 = 1$ as
	\[
	s\left(\frac{\alpha}{\delta_t}\right) \triangleq\left(\frac{\alpha}{\delta_t}\right)^3 - (2n-3)\left(\frac{\alpha}{\delta_t}\right)^2 - (2n\kappa_t + n - 3)\left(\frac{\alpha}{\delta_t}\right) - n\kappa_t + 1 = 0.
	\]
	It can be verified that $s(2n+2\sqrt{n\kappa_t}) > 0$ for any $n\geq 1, \kappa_t > 1$. Since $s(0) < 0$ and $s(\frac{\alpha}{\delta_t}) \rightarrow \infty$ as $\frac{\alpha}{\delta_t} \rightarrow \infty$, the unique positive root satisfies $\frac{\alpha}{\delta_t} \leq 2n + 2\sqrt{n\kappa_t} = O(n+\sqrt{n\kappa_t})$. 
	
	To bound $C_\textup{IDC}$ and $C_\textup{IFC}$, it suffices to note that
	\[
	\frac{\frac{\alpha^2}{\delta_t^2}p}{\frac{L}{\delta_t}+(1 - p)(\frac{\alpha}{\delta_t} + 1)} \meq{a} \frac{(\frac{\alpha}{\delta_t}+1)^2}{n(\frac{\alpha}{\delta_t}+\kappa_t)} \mleq{b} \frac{(2n + 2\sqrt{n\kappa_t}+1)^2}{n(2n + 2\sqrt{n\kappa_t}+\kappa_t)} \leq 6,
	\]
	where $\mar{a}$ uses the cubic equation and $\mar{b}$ holds because $\frac{x+1}{x+\kappa_t}$ increases monotonically as $x$ increases. Then, 
	\[
	\begin{aligned}
	C_\textup{IDC} &\leq L^2 + 6L\delta_t = O\big((L + \delta_t)^2\big), \\ 
	C_\textup{IFC} &\leq 14L = O(L).
	\end{aligned}
	\] 
	
\subsection{Proof to Proposition \ref{prop:R-Acc-SVRG-innr-loop}}
	Using the interpolation condition \eqref{interpolation_sc} of $h^{\delta_t}$ at $(\xsd, y_k)$, we obtain
	\[
	\begin{aligned}
		h^{\delta_t}(y_k) \leq{}& \innr{\phd{y_k}, y_k - \xsd} - \frac{1}{2L} \norm{\phd{y_k}}^2\\
		\mleq{a}{}& \frac{1 - \tau_x}{\tau_x}\innr{\phd{y_k}, \tilde{x}_k - y_k} + \frac{\tau_z}{\tau_x} \innr{\phd{y_k}, \delta_t(\tilde{x}_k - z_k) - \pfd{\tilde{x}_k}}\\
		&+ \innr{\phd{y_k}, z_k - \xsd} - \frac{1}{2L} \norm{\phd{y_k}}^2\\
		\meq{b}{}&\frac{1 - \tau_x}{\tau_x}\innr{\phd{y_k}, \tilde{x}_k - y_k} - \frac{\tau_z}{\tau_x} \innr{\phd{y_k}, \phd{\tilde{x}_k}}\\
		&+ \left(1 - \frac{\delta_t\tau_z}{\tau_x}\right)\innr{\phd{y_k}, z_k - \xsd} - \frac{1}{2L} \norm{\phd{y_k}}^2,
	\end{aligned}
	\]
	where $\mar{a}$ follows from the construction $y_{k} = \tau_x z_k + \left(1 - \tau_x\right) \tilde{x}_k +  \tau_{z}\left(\delta_t(\tilde{x}_k - z_k) - \pfd{\tilde{x}_k}\right)$ and $\mar{b}$ uses that $\delta_t(\tilde{x}_k - z_k) - \pfd{\tilde{x}_k} = \delta_t(\xsd - z_k) - \phd{\tilde{x}_k}$. 
	
	Using Lemma \ref{lem:shifting} with $\mathcal{H}_y = \mathcal{H}^{\delta_t}_k, \mathcal{G}_y = \mathcal{G}^{\delta_t}_k, z^+ = z_{k+1}, \xs = \xsd$ and taking the expectation (note that $\Eik{\mathcal{H}^{\delta_t}_k} = \phd{y_k}$), we can conclude that
	\[
	\begin{aligned}
		h^{\delta_t}(y_k) \leq{}& \frac{1 - \tau_x}{\tau_x}\innr{\phd{y_k}, \tilde{x}_k - y_k} - \frac{\tau_{z}}{\tau_x} \innr{\phd{y_k}, \phd{\tilde{x}_k}} - \frac{1}{2L} \norm{\phd{y_k}}^2 \\
		&+ \left(1 - \frac{\delta_t\tau_z}{\tau_x}\right)\frac{\alpha}{2} \left(\norm{z_k - \xsd}^2 - \left(1 + \frac{\delta_t}{\alpha}\right)^2\Eik{\norm{z_{k+1} - \xsd}^2}\right) \\&+ \left(1 - \frac{\delta_t\tau_z}{\tau_x}\right)\frac{1}{2\alpha} \Eik{\norm{\mathcal{H}^{\delta_t}_k}^2}.
	\end{aligned}
	\]
	
	To bound the shifted moment, we use the interpolation condition \eqref{interpolation_sc} of $h^{\delta_t}_{i_k}$ at $(\tilde{x}_k, y_k)$, that is
	\[
	\begin{aligned}
		\Eik{\norm{\mathcal{H}^{\delta_t}_k}^2} ={}& \Eik{\norm{\phdik{y_k} - \phdik{\tilde{x}_k}}^2} + 2\innr{\phd{y_k}, \phd{\tilde{x}_k}} - \norm{\phd{\tilde{x}_k}}^2\\
		\leq{}& 2L \big(h^{\delta_t}(\tilde{x}_k) - h^{\delta_t}(y_k) - \innr{\phd{y_k}, \tilde{x}_k - y_k}\big) \\ 
		&+ 2\innr{\phd{y_k}, \phd{\tilde{x}_k}} - \norm{\phd{\tilde{x}_k}}^2.
	\end{aligned}
	\]

	Re-arrange the terms.
	\[
	\begin{aligned}
		h^{\delta_t}(y_k) \leq{}& \left(1 - \frac{\delta_t\tau_z}{\tau_x}\right) \frac{L}{\alpha} \big(h^{\delta_t}(\tilde{x}_k) - h^{\delta_t}(y_k)\big) \\&+\left(\frac{1 - \tau_x}{\tau_x} - \left(1 - \frac{\delta_t\tau_z}{\tau_x}\right) \frac{L}{\alpha}\right)\innr{\phd{y_k}, \tilde{x}_k - y_k} \\
		&+ \left(1 - \frac{\delta_t\tau_z}{\tau_x}\right)\frac{\alpha}{2} \left(\norm{z_k - \xsd}^2 - \left(1 + \frac{\delta_t}{\alpha}\right)^2\Eik{\norm{z_{k+1} - \xsd}^2}\right) \\
		&+ \left(\frac{1}{\alpha} - \frac{\delta_t\tau_z}{\alpha\tau_x} - \frac{\tau_{z}}{\tau_x}\right) \innr{\phd{y_k}, \phd{\tilde{x}_k}}
		- \frac{1}{2L}\norm{\phd{y_k}}^2 \\
		&- \left(\frac{1}{2\alpha} - \frac{\delta_t\tau_z}{2\alpha\tau_x}\right)\norm{\phd{\tilde{x}_k}}^2.
	\end{aligned}
	\]
	
	The choice of $\tau_z$ in Proposition \ref{prop:R-params} ensures that $\frac{1 - \tau_x}{\tau_x} = \left(1 - \frac{\delta_t\tau_z}{\tau_x}\right) \frac{L}{\alpha}$, which leads to
	\begin{equation}\label{BS_SVRG_n1}
		\begin{aligned}
			h^{\delta_t}(y_k) \leq{}& (1 - \tau_x) h^{\delta_t}(\tilde{x}_k) + \frac{\alpha^2 (1 - \tau_x)}{2L} \left(\norm{z_k - \xsd}^2 - \left(1 + \frac{\delta_t}{\alpha}\right)^2\Eik{\norm{z_{k+1} - \xsd}^2}\right) \\
			&+ \frac{\alpha + \delta_t - (\alpha + L+\delta_t)\tau_x}{L\delta_t} \innr{\phd{y_k}, \phd{\tilde{x}_k}}
			- \frac{\tau_x}{2L}\norm{\phd{y_k}}^2 - \frac{1 - \tau_x}{2L}\norm{\phd{\tilde{x}_k}}^2.
		\end{aligned}
	\end{equation}
	
	Substitute the choice $\tau_x = \frac{\alpha + \delta_t}{\alpha + L+\delta_t}$.
	\[
		h^{\delta_t}(y_k) \leq{} \frac{L}{\alpha + L + \delta_t} h^{\delta_t}(\tilde{x}_k) + \frac{\alpha^2}{2(\alpha +L + \delta_t)} \left(\norm{z_k - \xsd}^2 - \left(1 + \frac{\delta_t}{\alpha}\right)^2\Eik{\norm{z_{k+1} - \xsd}^2}\right).
	\]
	
	Note that by construction, $\E{h^{\delta_t}(\tilde{x}_{k+1})} = p\E{h^{\delta_t}(y_k)} + (1 - p)\E{h^{\delta_t}(\tilde{x}_k)}$, and thus
	\[
	\begin{aligned}
		\E{h^{\delta_t}(\tilde{x}_{k+1})} \leq{}& \left(1 - \frac{p(\alpha + \delta_t)}{\alpha + L + \delta_t}\right) \E{h^{\delta_t}(\tilde{x}_k)} \\&+ \frac{\alpha^2p}{2(\alpha +L + \delta_t)} \left(\E{\norm{z_k - \xsd}^2} - \left(1 + \frac{\delta_t}{\alpha}\right)^2\E{\norm{z_{k+1} - \xsd}^2}\right).
	\end{aligned}
	\]
	
	Since $\alpha$ is chosen as the positive root of $\left(1 - \frac{p(\alpha + \delta_t)}{\alpha + L + \delta_t}\right)\left(1 + \frac{\delta_t}{\alpha}\right)^2 = 1$, defining the potential function
	\begin{equation}\label{app:bs-svrg-potential}
	T_k \triangleq \E{h^{\delta_t}(\tilde{x}_k)} + \frac{\alpha^2p}{2\big(L + (1 -p)(\alpha + \delta_t)\big)}\E{\norm{z_k - \xsd}^2},
	\end{equation}
	we have $T_{k+1} \leq \left(1 + \frac{\delta_t}{\alpha}\right)^{-2} T_k$. 
	
	Thus, at iteration $k$, the following holds,
	\[
	\begin{aligned}
		\E{h^{\delta_t}(\tilde{x}_k)}\leq{}& \left(1 +\frac{\delta_t}{\alpha}\right)^{-2k} \left(h^{\delta_t}(x_0) + \frac{\alpha^2p}{2\big(L + (1 -p)(\alpha + \delta_t)\big)}\norm{x_0 - \xsd}^2\right)\\
		\leq{}& \left(1 +\frac{\delta_t}{\alpha}\right)^{-2k} \left(f^{\delta_t}(x_0) - f^{\delta_t}(\xsd) + \frac{\alpha^2p}{2\big(L + (1 -p)(\alpha + \delta_t)\big)}\norm{x_0 - \xsd}^2\right)\\
		\mleq{\star}{}& \left(1 +\frac{\delta_t}{\alpha}\right)^{-2k} \left(f(x_0) - f(\xs) + \frac{\alpha^2p}{2\big(L + (1 -p)(\alpha + \delta_t)\big)}\norm{x_0 - \xsd}^2\right),
	\end{aligned}
	\]
	where $\mar{\star}$ uses Lemma \ref{lem:regular} \textit{(ii)}.
	
	Note that using the interpolation condition \eqref{interpolation_sc}, we have
	\[
	\begin{aligned}
		\E{h^{\delta_t}(\tilde{x}_k)}\geq{}& \frac{1}{2L}\E{\norm{\phd{\tilde{x}_k}}^2}\\ ={}& \frac{1}{2L}\E{\norm{\pfd{\tilde{x}_k} - \delta_t(\tilde{x}_k - \xsd)}^2} \\ ={}& \frac{1}{2L}\E{\norm{\pf{\tilde{x}_k} + \delta_t(\tilde{x}_k - x_0) - \delta_t(\tilde{x}_k - \xsd)}^2} \\
		={}& \frac{1}{2L}\E{\norm{\pf{\tilde{x}_k} - \delta_t(x_0 - \xsd)}^2} \\
		\geq{}& \frac{1}{2L}\E{\norm{\pf{\tilde{x}_k} - \delta_t(x_0 - \xsd)}}^2.
	\end{aligned}
	\]
	
	Finally, we conclude that 
	\begin{equation}\label{P6}
		\begin{aligned}
			\E{\norm{\pf{\tilde{x}_k}}} &\leq{} \delta_t\norm{x_0 - \xsd} \\
			&+ \left(1 +\frac{\delta_t}{\alpha}\right)^{-k} \sqrt{2L\big(f(x_0) - f(\xs)\big) + \frac{L\alpha^2p}{L + (1 -p)(\alpha + \delta_t)}\norm{x_0 - \xsd}^2}.
		\end{aligned}
	\end{equation}
	
	\textbf{Under IDC:} Invoking Lemma \ref{lem:regular} \textit{(iii)} to upper bound \eqref{P6}, we obtain that for any $\xs \in \Xs$, 
	\[
	\E{\norm{\pf{\tilde{x}_k}}} \leq{} \left(\delta_t + \left(1 +\frac{\delta_t}{\alpha}\right)^{-k} \sqrt{L^2 + \frac{L\alpha^2p}{L + (1 -p)(\alpha + \delta_t)}}\right) \norm{x_0 - \xs}.
	\]
	
	\textbf{Under IFC:} Invoking Lemma \ref{lem:regular} \textit{(iv)} to upper bound \eqref{P6}, we can conclude that
	\[
	\E{\norm{\pf{\tilde{x}_k}}} \leq{} \left(\sqrt{2\delta_t} + \left(1 +\frac{\delta_t}{\alpha}\right)^{-k} \sqrt{2L + \frac{2L\alpha^2p}{\big(L + (1 -p)(\alpha + \delta_t)\big)\delta_t}}\right)\sqrt{f(x_0) - f(\xs)}.
	\]
	
\subsection{Proof to Theorem \ref{thm:R-Acc-SVRG-final-IDC}}
	\textit{(i)} At outer iteration $\ell$, if Algorithm \ref{alg:R-Acc-SVRG-G} breaks the inner loop (Step \ref{alg-step:IDC-break}) at iteration $k$, by construction, we have $ (1 + \frac{\delta_\ell}{\alpha})^{-k}\sqrt{C_\textup{IDC}} \leq \delta_\ell$
	. Then, from Proposition \ref{prop:R-Acc-SVRG-innr-loop} \textit{(i)},
	\[
	\E{\norm{\pf{\tilde{x}_k}}} \leq{} 2 \delta_\ell R_0 \mleq{\star} \epsilon q,
	\]
	where $\mar{\star}$ uses $\delta_\ell \leq \delta^\star_{\textup{IDC}}$. By Markov's inequality, it holds that
	\[
	\prob{\norm{\pf{\tilde{x}_k}} \geq \epsilon} \leq \frac{\E{\norm{\pf{\tilde{x}_k}}}}{\epsilon} \leq q,
	\]
	which means that with probability at least $1 -q$, Algorithm \ref{alg:R-Acc-SVRG-G} terminates at iteration $k$ (Step \ref{alg-step:terminate}) before reaching Step \ref{alg-step:IDC-break}.
	
	\textit{(ii)} Note that the expected gradient complexity of each inner iteration is $p (n + 2) + (1 - p)2 = np + 2$. Then, for an inner loop that breaks at Step \ref{alg-step:IDC-break}, its expected complexity is
	\[
	\E{\# \text{grad}_t} \leq (np+2) \left(\frac{\alpha}{\delta_t} + 1\right) \log{\frac{\sqrt{C_\textup{IDC}}}{\delta_t}}. 
	\]
	Substituting the choices in Proposition \ref{prop:R-params}, we obtain
	\[
	\E{\# \text{grad}_t} = O\left(\left(n + \sqrt{\frac{nL}{\delta_t}}\right)\log{\frac{L+\delta_t}{\delta_t}}\right).
	\]
	Thus, the total expected complexity before Algorithm \ref{alg:R-Acc-SVRG-G} terminates with high probability at outer iteration $\ell$ is at most (note that $\delta_t = \delta_0 / \beta^t$)
	\[
	\sum_{t = 0}^{\ell} {\E{\# \text{grad}_t}} = O\left(\left(\ell n + \frac{1}{\sqrt{\beta} - 1}\sqrt{\frac{nL\beta}{\delta_\ell}}\right)\log{\frac{L+\delta_\ell}{\delta_\ell}}\right).
	\]
	Since outer iteration $\ell>0$ is the first time $\delta_\ell \leq \delta^\star_\textup{IDC}$, we have $\delta_\ell \leq \delta^\star_\textup{IDC} \leq \delta_\ell \beta$. Moreover, noting that $\ell = O(\log{\frac{\delta_0}{\delta_\ell}})$ and $\delta_0 = L$, we can conclude that (omitting $\beta$)
	\[
	\begin{aligned}
		\sum_{t = 0}^{\ell} {\E{\# \text{grad}_t}} &= O\left(\left(n \log{\frac{\delta_0}{\delta_\ell}} + \sqrt{\frac{nL}{\delta_\ell}}\right)\log{\frac{L+\delta_\ell}{\delta_\ell}}\right)\\
		&= O\left(\left(n \log{\frac{L R_0}{\epsilon q}} + \sqrt{\frac{nLR_0}{\epsilon q}}\right)\log{\frac{LR_0}{\epsilon q} }\right).
	\end{aligned}
	\]

\subsection{Proof to Theorem \ref{thm:R-Acc-SVRG-final-IFC}}
	\textit{(i)} At outer iteration $\ell$, if Algorithm \ref{alg:R-Acc-SVRG-G} breaks the inner loop (Step \ref{alg-step:IFC-break}) at iteration $k$, by construction, we have $ (1 + \frac{\delta_\ell}{\alpha})^{-k}\sqrt{C_\textup{IFC}} \leq \sqrt{2\delta_\ell}$
	. Then, from Proposition \ref{prop:R-Acc-SVRG-innr-loop} \textit{(ii)},
	\[
	\E{\norm{\pf{\tilde{x}_k}}} \leq{} \sqrt{8 \delta_\ell \Delta_0} \mleq{\star} \epsilon q,
	\]
	where $\mar{\star}$ uses $\delta_\ell \leq \delta^\star_{\textup{IFC}}$. By Markov's inequality, it holds that
	\[
	\prob{\norm{\pf{\tilde{x}_k}} \geq \epsilon} \leq \frac{\E{\norm{\pf{\tilde{x}_k}}}}{\epsilon} \leq q,
	\]
	which means that with probability at least $1 -q$, Algorithm \ref{alg:R-Acc-SVRG-G} terminates at iteration $k$ (Step \ref{alg-step:terminate}) before reaching Step \ref{alg-step:IFC-break}.
	
	\textit{(ii)} Note that the expected gradient complexity of each inner iteration is $p (n + 2) + (1 - p)2 = np + 2$. Then, for an inner loop that breaks at Step \ref{alg-step:IFC-break}, its expected complexity is
	\[
	\E{\# \text{grad}_t} \leq (np+2) \left(\frac{\alpha}{\delta_t} + 1\right) \log{\sqrt{\frac{C_\textup{IFC}}{2\delta_t}}}. 
	\]
	Substituting the choices in Proposition \ref{prop:R-params}, we obtain
	\[
	\E{\# \text{grad}_t} = O\left(\left(n + \sqrt{\frac{nL}{\delta_t}}\right)\log{\frac{L}{\delta_t}}\right).
	\]
	Thus, the total expected complexity before Algorithm \ref{alg:R-Acc-SVRG-G} terminates with high probability at outer iteration $\ell$ is at most (note that $\delta_t = \delta_0 / \beta^t$)
	\[
	\sum_{t = 0}^{\ell} {\E{\# \text{grad}_t}} = O\left(\left(\ell n + \frac{1}{\sqrt{\beta} - 1}\sqrt{\frac{nL\beta}{\delta_\ell}}\right)\log{\frac{L}{\delta_\ell}}\right).
	\]
	Since outer iteration $\ell>0$ is the first time $\delta_\ell \leq \delta^\star_\textup{IFC}$, we have $\delta_\ell \leq \delta^\star_\textup{IFC} \leq \delta_\ell \beta$. Moreover, noting that $\ell = O(\log{\frac{\delta_0}{\delta_\ell}})$ and $\delta_0 = L$, we can conclude that (omitting $\beta$)
	\[
	\begin{aligned}
		\sum_{t = 0}^{\ell} {\E{\# \text{grad}_t}} &= O\left(\left(n \log{\frac{\delta_0}{\delta_\ell}} + \sqrt{\frac{nL}{\delta_\ell}}\right)\log{\frac{L}{\delta_\ell}}\right)\\
		&= O\left(\left(n \log{\frac{\sqrt{L\Delta_0}}{\epsilon q}} + \frac{\sqrt{nL\Delta_0}}{\epsilon q}\right)\log{\frac{\sqrt{L\Delta_0}}{\epsilon q}}\right).
	\end{aligned}
	\]

\section{Katyusha + L2S}
\label{app:l2s-katyusha}
By applying \textsf{AdaptReg} on Katyusha,  \citet{allen2017katyusha} showed that the scheme outputs a point $x_{s_1}$ satisfying $\E{f(x_{s_1})} - f(\xs)\leq \epsilon_1$ in \[O\left(n\log{\frac{LR_0^2}{\epsilon_1}} + \frac{\sqrt{nL}R_0}{\sqrt{\epsilon_1}}\right),\] oracle calls for any $\epsilon_1 > 0$ (cf. Corollary 3.5 in \citep{allen2017katyusha}). 

For L2S, \citet{li20a} proved that when using an $n$-dependent step size, its output $x_a$ satisfies (cf. Corollary 3 in \citep{li20a})
\[
	\E{\norm{\pf{x_a}}}^2\leq\E{\norm{\pf{x_a}}^2}  = O\left(\frac{\sqrt{n}L\big(f(x_0) - f(\xs)\big)}{T}\right),
\]     
after running $T$ iterations.

We can combine these two rates following the ideas in \citep{nesterov2012make}. Set $\epsilon_1 = O\big(\frac{T\epsilon^2}{\sqrt{n}L}\big)$ for some $\epsilon>0$ and let the input $x_0$ of L2S be the output $x_{s_1}$ of Katyusha. By chaining the above two results, we obtain the guarantee $\E{\norm{\pf{x_a}}} = O(\epsilon)$ in oracle complexity
\[
	O\left(n + T + n\log{\frac{n^{1/4}LR_0}{\sqrt{T}\epsilon}} + \frac{n^{3/4}LR_0}{\sqrt{T}\epsilon}\right).
\]

Minimizing the complexity by choosing $T = O\big(\frac{\sqrt{n}(LR_0)^{2/3}}{\epsilon^{2/3}}\big)$, we get the total oracle complexity
\[
O\left(n\log{\frac{LR_0}{\epsilon}} + \frac{\sqrt{n}(LR_0)^{2/3}}{\epsilon^{2/3}} \right).
\]
\end{document}